\numberwithin{equation}{section}
\theoremstyle{plain}
\newtheorem{theorem}{Theorem}[section]
\newtheorem{corollary}[theorem]{Corollary}
\newtheorem{lemma}[theorem]{Lemma}
\newtheorem{proposition}[theorem]{Proposition}
\theoremstyle{definition}
\newtheorem{definition}[theorem]{Definition}
\newtheorem{remark}[theorem]{Remark}
\theoremstyle{remark}
\newcommand{\A}{\mathbb{A}}
\newcommand{\R}{\mathbb{R}}
\newcommand{\Q}{\mathbb{Q}}
\newcommand{\Z}{\mathbb{Z}}
\newcommand{\N}{\mathbb{N}}
\newcommand{\C}{\mathbb{C}}
\renewcommand{\H}{\mathbb{H}}
\newcommand{\G}{\mathbb{G}}
\newcommand{\zxz}[4]{\begin{pmatrix} #1 & #2 \\ #3 & #4 \end{pmatrix}}
\newcommand{\leg}[2]{\left( \frac{#1}{#2} \right)}
\newcommand{\kzxz}[4]{\left(\begin{smallmatrix} #1 & #2 \\ #3 & #4\end{smallmatrix}\right) }
\newcommand{\im}{\operatorname{Im}}
\newcommand{\re}{\operatorname{Re}}
\newcommand{\calA}{\mathcal{A}}
\newcommand{\calB}{\mathcal{B}}
\newcommand{\calC}{\mathcal{C}}
\newcommand{\calD}{\mathcal{D}}
\newcommand{\calH}{\mathcal{H}}
\newcommand{\calK}{\mathcal{K}}
\newcommand{\calM}{\mathcal{M}}
\newcommand{\calQ}{\mathcal{Q}}
\newcommand{\calS}{\mathcal{S}}
\newcommand{\calT}{\mathcal{T}}
\newcommand{\calZ}{\mathcal{Z}}
\newcommand{\frake}{\mathfrak e}
\newcommand{\frakg}{\mathfrak g}
\newcommand{\frakp}{\mathfrak p}
\newcommand{\frakt}{\mathfrak t}
\newcommand{\bs}{\backslash}
\newcommand{\vol}{\operatorname{vol}}
\newcommand{\KM}{\operatorname{KM}}
\newcommand{\reg}{\operatorname{reg}}
\newcommand{\Span}{\operatorname{span}}
\newcommand{\SL}{\operatorname{SL}}
\newcommand{\GL}{\operatorname{GL}}
\newcommand{\Sp}{\operatorname{Sp}}
\newcommand{\GSpin}{\operatorname{GSpin}}
\newcommand{\Sym}{\operatorname{Sym}}
\newcommand{\Aut}{\operatorname{Aut}}
\newcommand{\SO}{\operatorname{SO}}
\newcommand{\supp}{\operatorname{supp}}
\newcommand{\sig}{\operatorname{sig}}
\newcommand{\Div}{\operatorname{Div}}
\newcommand{\End}{\operatorname{End}}
\newcommand{\ord}{\operatorname{ord}}
\newcommand{\id}{\operatorname{id}}
\newcommand{\pr}{\operatorname{pr}}
\begin{document}

\title[A converse theorem for Borcherds products]{A converse theorem for Borcherds products and the injectivity of the Kudla-Millson theta lift}
\author{Oliver Stein}
\address{Fakult\"at f\"ur Informatik und Mathematik\\ Ostbayerische Technische Hochschule Regensburg\\Galgenbergstrasse 32\\93053 Regensburg\\Germany}
\email{oliver.stein@oth-regensburg.de}

\begin{abstract}
  We prove a converse theorem for the multiplicative Borcherds lift for lattices of square-free level whose associated  discriminant group is anisotropic. This can be seen as generalization of Bruinier's results in \cite{Br2}, which provides a converse theorem for lattices of prime level. The surjectivity of the Borcherds lift in our case follows from the injectivity of the Kudla-Millson theta lift. We generalize the corresponding results in \cite{BF1} to the aforementioned lattices and thereby in particular to lattices which are not unimodular and not of type $(p,2)$. Along the way, we compute the contribution of both, the non-Archimedean and Archimedean places of the $L^2$-norm of the Kudla-Millson theta lift.  As an application we refine a theorem of Scheithauer on the non-existence of reflective automorphic products. 
\end{abstract}

\maketitle

\section{Introduction}
In his celebrated paper \cite{B}, Borcherds constructed a multiplicative lifting (referred to as {\it Borcherds lift}) from the space of vector valued weakly holomorphic modular forms of weight $1-\frac{p}{2}$ for the Weil representation associated to a lattice $L$ of type $(p,2)$ and rank $m\in 2\Z$ to meromorphic modular forms for the orthogonal group of $L$. The orthogonal modular forms arising this way are of special interest as they allow an infinite product expansion at each cusp and have a divisor being a linear combination of so-called Heegner divisors. Therefore, the following question, initially raised by Borcherds (see \cite{B}, Problem 16.10), is of great importance:
Let $F$ be a meromorphic modular form for the orthogonal group of $L$ whose divisor is a linear combination of Heegner divisors. Is there a weakly holomorphic modular form of weight $1-\frac{p}{2}$ for the Weil representation  attached to the lattice $L$ whose Borcherds lift is (up to a multiplicative constant) the form $F$? This question has been addressed in several papers: It is pointed out in \cite{Br2}, that in the case of lattices with signature $(1,2)$ there are orthogonal modular forms which cannot be obtained as a Borcherds lift of some vector valued modular form. However, an affirmative answer for a large class of lattices is given in \cite{Br1} and \cite{Br2}.  The most general results in this direction can be found  in \cite{Br2}:
\begin{enumerate}
  \item[i)]
    In Theorem 1.2 a converse theorem is given under the assumption that the lattice $L$ allows a decomposition over $\Z$ of the form
\begin{equation}\label{eq:lattice_decomp}
  L = M\oplus U(N)\oplus U,
\end{equation}
 where $M$ is a lattice of type $(p-2,0)$, $U$ is a hyperbolic plane and $U(N)$ is a scaled hyperbolic plane, i. e. the hyperbolic plane $U$ equipped with the quadratic form $Q_N((x_1,x_2))=Nx_1x_2$. 
\item[ii)]
  The Theorem 1.4 also states a converse theorem. But it does not rely on the assumption that the lattice splits a hyperbolic plane. It only requires that the level of the lattice is a prime number. 
  \end{enumerate}
In both of the above theorems it is presumed that $p \ge 3$.

In \cite{BF}, a converse theorem under the hypothesis that $L$ is {\it unimodular} is proved. The main portion of the proof shows that the Kudla-Millson theta lift, a map from the space of cusp forms of weight $\frac{m}{2}$  transforming with the Weil representation into the space of closed differential 2-forms on the modular variety $X$, is injective (see Theorem 4.9).  This is achieved by evaluating the $L^2$-norm of the Kudla-Millson theta lift, showing that it is non-zero for most choices of $m$ and the Witt rank $r_0$ of $L$. The important paper \cite{BF1} establishes a relation between the regularized theta lift (see \eqref{eq:regularized_theta_lift}) and the Kudla-Millson theta lift (see \cite{BF1}, Theorem 6.1). Based on this relation and a weak converse theorem in \cite{Br1}, Theorem 4.23, the surjectivity of the Borcherds lift is derived.

The purpose of this paper is threefold.
\begin{enumerate}
\item[i)]
We give a further converse theorem for the Borcherds lift. This theorem may be seen as a generalization of  Bruinier's Theorem 1.4 in \cite{Br2}. Our converse theorem does not rely on the decomposition \eqref{eq:lattice_decomp}. However, it assumes that the level of the underlying lattice $L$ is square-free and the associated discriminant group $L'/L$ is anisotropic. This condition means that each $p$-group of $L'/L$ is of the form \eqref{eq:anisotropic_p_groups}. Although we have a restriction on the structure of the $p$-groups of $L'/L$, our results may be interpreted as generalisation of Theorem 1.4 in \cite{Br2} to lattices with square-free level (to the best of my knowledge such a result has not yet been established). 
\item[ii)]
As a byproduct, but probably interesting in its own right,  we show the injectivity of the Kudla-Millson theta lift associated to a lattice which is not unimodular and not of type $(p,2)$. This generalizes the results in \cite{Br1} and \cite{Zu}. 
\item[iii)]
  As an application we refine a result of Scheithauer (Theorem 12.3 in \cite{Sch}), which states that there are only finitely many reflective and symmetric automorphic products of singular weight.

  As far as I am aware, most of the recent papers on the classification of reflective modular forms are based  on Bruinier's converse theorems in \cite{Br2} and thereby rely either on the assumption that the involved lattice splits $U\oplus U(N)$ over $\Z$ or has prime number level. It is conceivable that these assumptions can be weakened by employing the converse theorem of the present paper. I hope to come to back to these topics in the future. 
\end{enumerate}

Let us explain the content of the paper in some more detail. 
Let $(L,(\cdot,\cdot))$ be a non-degenerate even lattice of type $(p,2)$ equipped  with a bilinear form $(\cdot,\cdot)$  and the associated quadratic form $x\mapsto Q(x)=\frac{1}{2}(x,x)$. We denote the rank of $L$ with $m$ and assume in the whole paper that $m$ is even.  Moreover, let $V(\Q)$ be the rational  quadratic space $L\otimes \Q$,  
$L'$ the dual lattice of $L$ and $A=L'/L$ be the associated discriminant group. Since $L$ is even, the quadratic form $Q$ induces a $\Q/\Z$-valued quadratic form on $A$, which thereby becomes also a quadratic module.
The (finite) Weil representation $\rho_A$ is a unitary representation of $\SL_2(\Z)$ on the group ring $\C[A]$,
\[
\rho_A:\SL_2(\Z)\longrightarrow \GL(\C[A]).
\]
We denote the standard basis of $\C[A]$ by $(\frake_\mu)_{\mu\in A}$.
A weakly holomorphic modular form of weight $\kappa\in \Z$ for $\SL_2(\Z)$ of type $\rho_A$ is a holomorphic function $f$ on the upper half plane $\H$ which satisfies $f(\gamma\tau) = (c\tau+d)^\kappa\rho_A(\gamma)f(\tau)$ for all $\gamma=\kzxz{a}{b}{c}{d}\in\SL_2(\Z)$. 
Additionally, $f$ is meromorphic at the cusp $\infty$ (see Section \ref{subsec:weak_maas_forms} for further details).
We denote the space of these forms by $M_{\kappa,A}^!$. A modular form of this type is called holomorphic if it is holomorphic instead of meromorphic at $\infty$. We write $M_{\kappa,A}$ for the space of all such  modular forms and use the symbol $S_{\kappa,A}$ for the subspace of cusp forms. 
Finally, let $V(\R) = L\otimes \R$ and $D$ the Grassmannian of 2-dimensional negative definite subspaces in $V(\R)$.

A vital step in the proof of the surjectivity of the Borcherds lift  is the proof of the injectivity of the Kudla-Millson theta lift $\Lambda$. It maps a cusp form $f\in S_{\kappa,A}$ of weight $\kappa = \frac{m}{2}$  to a closed differential $2$-form on $D$. More precisely, we have
\begin{align*}
  \Lambda: S_{\kappa,A}\longrightarrow \calZ^2(D),\quad f\mapsto \Lambda(f)
\end{align*}
with
\[
\Lambda(f)(z) = \int_{\SL_2(\Z)\bs \H}\langle f(\tau),\Theta_A(\tau,z,\varphi_{\KM})\rangle \frac{d u d v}{v^2}
\]
with $\tau = u+iv\in \H$. 
Here $\langle\cdot,\cdot\rangle$ is the standard scalar product on the group ring,
 \begin{equation}\label{eq:schwarz_form_tensor_prod}
\varphi_{\KM}\in\left[S(V)\otimes \calA^2(D)\right]^{O(V(\R))},
 \end{equation} is a Schwartz {\it form} constructed by Kudla and Millson in \cite{KM1} and $\Theta_A$ is a theta series associated to $\varphi_{\KM}$, where $\calA^2(D)$ means the space of smooth differential 2-forms on $D$ and $\calZ^2(D)$ the subspace of closed $2$-forms. See Section \ref{subsec:schwarz_forms} for the definition of the Kudla-Millson form and Section \ref{sec:inj_kudla_millson} for the definition of $\Theta_A$. In \cite{BF} the injectivity of $\Lambda$ is proved in a more general setting:
 \begin{enumerate}
 \item[i)]
   Instead of $\varphi_{\KM}$ a more generalized Schwartz form $\varphi_{q,l}$ due to \cite{FM} is considered. 
 \item[ii)]
   An adelic set-up for the involved Eisenstein series and theta series is utilized.
 \item[iii)]
   Instead of working with the Grassmannian $D$ the authors work with a Shimura variety $X_K$ of orthogonal type, whose complex points are given by
   \[
   X_K(\C) = H(\Q)\bs (D\times H(\A_f))/K,
   \]
   where $H = \GSpin(V)$, $\A_f$ the finite adeles of $\Q$ and $K\subset H(\A_f)$ is a compact open subgroup which leaves $L$ stable and acts trivially on $A$. 
 \end{enumerate}
 However, the proof is given under the assumption that $L$ is {\it unimodular}. Very recently, Zuffetti proved in \cite{Zu} the injectivity of $\Lambda$ for {\it non-unimodular} lattices but of signature $(p,2)$.  
 In the present paper, we prove the injectivity of $\Lambda$ in the same general setting, but we drop the aforementioned restrictions on $L$  and assume only that the associated discriminant group $A$ is anisotropic.  To this end, we produce for all relevant results in Section 4 of \cite{BF} a corresponding result in the vector valued setting. This includes a vector valued version of the Siegel-Weil formula, which can be easily deduced from the classical Siegel-Weil formula in \cite{KR1} and \cite{KR2}. Apart from Proposition 4.7 and Theorem 4.9, the transfer to the vector valued approach is mostly straightforward. The main difference is that we have to work with a special family of Schwartz-Bruhat functions $\varphi_\mu\in S(V(\A_f)), \; \mu \in A,$ on the finite adeles $\A_f$ (see \eqref{eq:familiy}).
 It  takes a lot more effort to obtain the analogue of Proposition 4.7 and Theorem 4.9 in \cite{BF}. The former result requires a vector valued version of the classical doubling formula (see \cite{Bo}), which has been given in a separate paper by the author (see \cite{St2}). Theorem 4.9 relies on the existence of the standard $L$-function associated to a Hecke eigenform $f$ of all Hecke operators $T(n^2)$. Such an $L$-function has been defined in another paper by the author  and the usual fundamental properties have been proved therein (see \cite{St3}). Thus, the following theorem and consequently this paper can be seen as a result of several papers of the author (\cite{St1}-\cite{St3}).
 \begin{theorem}\label{thm:intro_injectivity_kudla_millson}[Cor. \ref{cor:injectivity_kudla_millson}]
  Let $m > \max(6, 2l-2, 3+r_0)$ and $s_0= (m-3)/2$. Further, let $p>1$ and $q+l$ even and assume that $A$ is anisotropic. Then the theta lift $\Lambda: S_{\kappa,A}\rightarrow \calZ^q(X_K,\widetilde{\Sym}^l(V))$ is injective. 
 \end{theorem}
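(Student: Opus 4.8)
The plan is to deduce the injectivity of $\Lambda$ from an explicit evaluation of the Petersson norm $\|\Lambda(f)\|^2$, carrying out the argument of \cite{BF} throughout in the vector valued setting; the three new inputs---a vector valued Siegel--Weil formula, the vector valued doubling identity of \cite{St2}, and the standard $L$-function of \cite{St3}---are precisely what make the non-unimodular case tractable. Since $\Lambda$ is equivariant for the Hecke operators $T(n^2)$ of \cite{St3} and $S_{\kappa,A}$, $\kappa=\tfrac{m}{2}$, is spanned by their simultaneous eigenforms, it suffices to show $\Lambda(f)\neq 0$ for a single nonzero Hecke eigenform $f$. First I would rewrite $\langle\Lambda(f),\Lambda(f)\rangle_{X_K}$, via a seesaw manipulation, as an integral over $(\SL_2(\Z)\bs\H)^2$ of $\langle f(\tau_1),\overline{f(\tau_2)}\rangle$ against the value at the orthogonal group of the theta kernel attached to $\varphi_{q,l}\wedge\overline{\varphi_{q,l}}$ and to the family $\varphi_\mu\in S(V(\A_f))$ of \eqref{eq:familiy}; one must verify here that all the integrals in sight converge, which is where $m>3+r_0$ (Weil's convergence criterion $m-r_0>n+1$ for the orthogonal theta integral, with $n=2$) first enters.

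Next I would apply the vector valued Siegel--Weil formula---deduced by feeding the finite adelic data $\varphi_\mu$ into the classical Siegel--Weil formula of \cite{KR1,KR2}---to replace the inner orthogonal theta integral by the value at $s_0=(m-3)/2$ of a genus two Siegel Eisenstein series on $\Sp_4(\A)$, which is exactly the Siegel--Weil point attached to a quadratic space of dimension $m$ with $n=2$. The remaining integral of $\langle f,\bar f\rangle$ against the restriction of this Eisenstein series to $\SL_2\times\SL_2$ is a global doubling integral; invoking the vector valued doubling identity of \cite{St2} factors it as
\[
\|\Lambda(f)\|^2 \;=\; C\cdot L(s_0,f)\cdot Z_\infty(s_0)\cdot\prod_{p}Z_p(s_0),
\]
where $L(s,f)$ is the standard $L$-function of \cite{St3}, $Z_\infty$ is an Archimedean local zeta integral of $\varphi_{q,l}$, and the $Z_p$ are local zeta integrals of the $\varphi_\mu$ at the finitely many ramified primes, the unramified primes assembling into the Euler product of $L(s,f)$. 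Establishing this factorization together with its local computations is the content of the vector valued analogues of Proposition~4.7 and Theorem~4.9 of \cite{BF}.

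It then remains to check that no factor on the right vanishes. For the Archimedean contribution I would carry out the explicit integration of $\varphi_{q,l}$ against its conjugate over the symmetric space $D$ and evaluate $Z_\infty(s_0)$ as a ratio of Gamma functions, finite and nonzero exactly under the stated bound on $m$; the three terms in the maximum reflect, respectively, the range of absolute convergence of the genus two Eisenstein series and the attendant Euler product ($m>6$), the decay demanded of the $\widetilde{\Sym}^l$-valued form $\varphi_{q,l}$ ($m>2l-2$), and Weil's criterion for the theta integral ($m>3+r_0$). For the non-Archimedean factors I would compute $Z_p(s_0)$ directly from the explicit shape of $\varphi_\mu$; here the anisotropy of $A$ is essential, since it forces each local quadratic module into the short list \eqref{eq:anisotropic_p_groups}, for which the local zeta integrals are simple enough to be evaluated by hand and seen to be nonzero. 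Finally, since $m>6$ places $s_0$ in the half-plane of absolute convergence of the Euler product defining $L(s,f)$, the value $L(s_0,f)$ is a nonzero convergent product; combining the three non-vanishing statements gives $\|\Lambda(f)\|^2\neq 0$, hence $\Lambda(f)\neq 0$, which is the injectivity asserted.

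I expect the main obstacle to be the Archimedean computation of $Z_\infty$: in the presence of the $\widetilde{\Sym}^l$-coefficient and for general degree $q$ it is considerably more delicate than the scalar, unimodular computation of \cite{BF}, and it has to be executed with normalizations matching exactly those of the vector valued doubling identity of \cite{St2} and of the vector valued Siegel--Weil formula, since a single unaccounted constant or local factor there would hide a zero and wreck the non-vanishing conclusion.
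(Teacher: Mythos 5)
Your overall strategy---reduce to common Hecke eigenforms, write $\|\Lambda(f)\|_2^2$ as a theta integral over $X_K$, apply the vector valued Siegel--Weil formula at $s_0=(m-3)/2$, factor the resulting doubling integral through the standard $L$-function of \cite{St3}, and check that no factor vanishes---is exactly the paper's. Two of your steps, however, do not close as written; one is a genuine gap.

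The gap is your treatment of $L(s_0,f)$. You assert that because $s_0$ lies in the half-plane of absolute convergence, ``the value $L(s_0,f)$ is a nonzero convergent product.'' That inference fails here: the local factors \eqref{eq:local_L_function} are not of the pure form $\prod_i(1-\alpha_{i,p}p^{-s})^{-1}$ but carry numerators $1+\chi_{F,p}^{(1)}(p)\chi_{F,p}^{(2)}(p)p^{-2s+1}$, and at the primes $p\mid|A|$ the Satake value $\chi_{F,p}(m(p,p))$ has absolute value $|A_p|\ge p$, so such a numerator can vanish well inside the region where the Euler product converges. This is precisely why the paper needs Theorem \ref{thm:L_func_non_zero}: using the vector valued Satake map, the compatibility \eqref{eq;compatibility_notcoprime} and Milgram's formula, one computes $\chi_{F,p}(m(p,p))=|A_p|e(-\sig(A_p)/4)\chi_{A_p^\perp}(p)$ for $p\mid|A|$, and only then can one bound $1+\chi_{F,p}(m(p,p))p^{m/2+3l-5}$ away from zero, using the anisotropy of $A_p$ and $\tfrac{m}{2}>l+3$. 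You do gesture at a ramified local computation exploiting anisotropy, but you locate it in local zeta integrals $Z_p$ of the $\varphi_\mu$ sitting outside the $L$-function; in the paper's normalization the ramified data is absorbed into the explicit constants $K(A_p,m,l)$ and into $L(s,f)$ itself, and the genuinely delicate non-vanishing statement lives in the latter. Without the explicit Satake computation your argument does not rule out a zero of a ramified Euler factor at the special point.

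The second point is not a gap but a missed device. You propose to evaluate the Archimedean doubling zeta integral of $\phi_{q,l}$ directly and rightly flag it as the hardest step; the paper never performs this computation. By \eqref{eq:rel_xi_phi_q_l} one has $\phi_{q,l}=\xi+\omega(R_{11})\omega(R_{22})\psi$, the second term contributes nothing against cusp forms (Proposition \ref{prop:l2_norm}), and by Proposition \ref{prop:standard_section_xi} the section $\lambda(\xi)$ equals $C(s)$ times the standard weight-$(m/2+l)$ section, with $C(s_0)\neq0$ exactly when $p>1$ (this is where the hypothesis $p>1$ enters). The Eisenstein series is thereby identified with the holomorphic vector valued series $E^2_{\kappa,0}$ of \cite{St1}, so the Archimedean contribution collapses to the explicit Rankin--Selberg constant $K(\kappa,-l/2)$ of \cite{St2}, Theorem 6.4, and no new Archimedean integral has to be computed. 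Incorporating this replacement, and the ramified Satake computation above, would turn your outline into the paper's proof.
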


 Based on  Theorem \ref{thm:intro_injectivity_kudla_millson}, subsequently the surjectivity of the Borcherds lift is established. The main result regarding the Borcherds lift is Theorem 13.3 of \cite{B}. It states that, given a weakly holomorphic modular form $g$ of weight $\kappa=1-\frac{p}{2}$ and type $\rho_A$ with Fourier coefficients $c(\mu,n)$ with  $\mu\in A$ and $n\in \Z+Q(\mu)$ (and $c(\mu,n)\in \Z$ for $n<0$), there exists a meromorphic modular form $\Psi_L$ for some subgroup of the orthogonal group of $L$ with
 \begin{enumerate}
 \item[i)]
   weight $\displaystyle \frac{c(0,0)}{2}$ and
 \item[ii)]
   a divisor given by
\[
   \sum_{\mu\in A}\sum_{\substack{n\in \Z+Q(\mu)\\n<0}}c(\mu,n)Z(n,\mu), 
   \]
   where $Z(\mu,n)$ is given by \eqref{eq:div_components}. 
 \end{enumerate}
 The modular form $\Psi_L$ is defined by means of the regularized theta lift
 \begin{equation}\label{eq:regularized_theta_lift}
 \Phi_L(g)(\tau,z) =  \int_{\SL_2(\Z)\bs \H}^{\reg}\langle g(\tau),\Theta_A(\tau,z,\varphi_\infty^{p,2})\rangle \frac{du dv}{v^2},
 \end{equation}
 where $\Theta_A$ is the theta series associated to the Gaussian $\varphi_\infty^{p,2}$ of signature $(p,2)$ and $\int_{\SL_2(\Z)\bs \H}^{\reg}$ is a regularization of the integral $\int_{\SL_2(\Z)\bs \H}$ according to \cite{B}. 
 The proof of our converse theorem makes use of the same approach as the one taken for Cor. 1.7 in \cite{BF}. It is based on Thm. 6.1 in \cite{BF1} and Thm. 4.23 in \cite{Br1}.  We stick with adelic setup we employed to establish the injectivity of the Kudla-Millson theta lift and generalize both of the aforementioned theorems to this setting. Based on these results we obtain
\begin{corollary}[Cor. \ref{cor:surjectivity_borcherds_lift}]
  Let $m$ be the even rank of the lattice $L$ satisfying $m>\max(6,3+r_0)$ and $m\equiv 0\bmod{4}$. Moreover, let the  associated discriminant form $A$ be anisotropic and $F:D^+\rightarrow \C$ be a meromorphic modular form of weight $r$ and character $\chi$ (of finite order) for the discriminant kernel $\Gamma(L)$  whose divisor is a linear combination of Heegner divisors. Then there exists a weakly holomorphic modular form $f\in M_{\ell,L^-}^!$ such that $F$ is up to a constant multiple the Borcherds lift $\Psi_L$ of $f$. 
\end{corollary}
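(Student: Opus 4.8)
The plan is to carry out, in the vector-valued adelic set-up of Theorem~\ref{thm:intro_injectivity_kudla_millson}, the argument used to prove \cite{BF}, Cor.~1.7. Write $\dv(F)=\sum_{\mu\in A}\sum_{n<0}c(\mu,n)Z(\mu,n)$ for the Heegner divisor of $F$; the multiplicities $c(\mu,n)$ are integers. Since $m\equiv 0\bmod 4$ makes $\ell:=1-\tfrac p2=2-\tfrac m2$ an even integer, there is a harmonic weak Maass form $f$ of weight $\ell$ and type $\rho_{L^-}$ with principal part $\sum_{\mu,n<0}c(\mu,n)\frake_\mu q^n$ (such $f$ always exists), and its shadow $\xi_\ell f$ lies in $S_{\kappa,A}$ with $\kappa=2-\ell=\tfrac m2$. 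If $\xi_\ell f=0$, i.e.\ $f\in M^!_{\ell,L^-}$, then by Borcherds' Theorem~13.3 of \cite{B} the product $\Psi_L(f)$ is a meromorphic modular form for $\Gamma(L)$ of weight $c(0,0)/2$ with divisor $\dv(F)$; comparing divisors, $F/\Psi_L(f)$ is a nowhere-vanishing holomorphic modular form, hence (the automorphic line bundle having positive degree, such a form has weight $0$ and is a bounded holomorphic function on $X_K$) a constant, so that $F$ equals a constant multiple of $\Psi_L(f)$ and $r=c(0,0)/2$. Everything therefore reduces to proving $\xi_\ell f=0$, and for this I would follow the reduction of \cite{Br1}, Thm.~4.23 and the relation of \cite{BF1}, Thm.~6.1.

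The key step is the analogue, in the present setting and for the family $\varphi_\mu$, $\mu\in A$, of Schwartz--Bruhat functions, of the current identity of \cite{BF1}, Theorem~6.1: on $X_K$ one has
\[
dd^c\,\Phi_L(f)=c_1\,\Lambda(\xi_\ell f)+c_2\,\Omega-\delta_{\dv(F)},
\]
where $\Omega$ is the invariant Kähler form, $\delta_{\dv(F)}$ the current of integration along $\dv(F)$, $c_1\neq 0$, and $\Phi_L(f)$ the regularized theta lift of \eqref{eq:regularized_theta_lift}. Combined with the Poincaré--Lelong equation $dd^c\log\|F\|^2_{\mathrm{Pet}}=\delta_{\dv(F)}-r\,\Omega$ for the Petersson norm of $F$, this shows that, after a suitable normalization, $\psi:=\Phi_L(f)+\log\|F\|^2_{\mathrm{Pet}}$ extends to a \emph{smooth} function on $X_K$ with $dd^c\psi=c_1\,\Lambda(\xi_\ell f)+c_3\,\Omega$ for some constant $c_3$.

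Now $\Lambda(\xi_\ell f)$ is a square-integrable harmonic $2$-form, since $\xi_\ell f$ is a cusp form and the Kudla--Millson form $\varphi_{\KM}$ is harmonic; moreover, by the vector-valued Siegel--Weil formula (deduced from \cite{KR1}, \cite{KR2}) the integral $\int_{X_K}\Lambda(\xi_\ell f)\wedge\Omega^{p-1}$ unfolds to a nonzero constant times the Petersson pairing of $\xi_\ell f$ with the weight-$\kappa$ Eisenstein series whose Fourier coefficients are the volumes of the $Z(\mu,n)$, hence vanishes. Pairing the displayed expression for $dd^c\psi$ against $\Lambda(\xi_\ell f)$ in $L^2(X_K)$: the left-hand side is $\langle d(d^c\psi),\Lambda(\xi_\ell f)\rangle=\langle d^c\psi,d^*\Lambda(\xi_\ell f)\rangle+(\text{boundary})=0$ because $\Lambda(\xi_\ell f)$ is coclosed and rapidly decreasing, while the $\Omega$-term on the right contributes $c_3\langle\Omega,\Lambda(\xi_\ell f)\rangle$, a constant multiple of $\int_{X_K}\Lambda(\xi_\ell f)\wedge\Omega^{p-1}=0$; hence $\|\Lambda(\xi_\ell f)\|^2_{L^2}=0$, so $\Lambda(\xi_\ell f)=0$. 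Applying Theorem~\ref{thm:intro_injectivity_kudla_millson} with $q=2$ and $l=0$ — its hypotheses $m>\max(6,3+r_0)$, $p>1$, $q+l$ even and $A$ anisotropic being exactly those assumed here — we conclude $\xi_\ell f=0$, which together with the first paragraph finishes the proof.

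I expect the main obstacle to be the analysis at the boundary of the non-compact variety $X_K$: the integration-by-parts step $\langle d(d^c\psi),\Lambda(\xi_\ell f)\rangle=0$, and the smoothness and growth of $\psi=\Phi_L(f)+\log\|F\|^2_{\mathrm{Pet}}$, must be controlled on a smooth toroidal compactification using precise estimates for $\Phi_L(f)$ and $\log\|F\|_{\mathrm{Pet}}$ along the boundary divisors and the cusps, in the spirit of Borcherds' analysis in \cite{B}. Besides this, establishing the vector-valued forms of \cite{BF1}, Theorem~6.1 and of the Siegel--Weil formula, and keeping track of the constants $c_1,c_2,c_3$, is essentially a matter of transporting the existing arguments to the adelic set-up with the family $\varphi_\mu$ in place of a single Schwartz function; the final constancy of $F/\Psi_L(f)$ and the identification $r=c(0,0)/2$ are then routine once the two divisors are known to coincide.
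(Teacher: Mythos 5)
Your overall skeleton matches the paper's: produce a harmonic weak Maass form $f\in H^+_{\ell,L^-}$ attached to the divisor of $F$, show $\Lambda(\xi_\ell(f))=0$, invoke the injectivity of the Kudla--Millson lift (Corollary \ref{cor:injectivity_kudla_millson}) and the exact sequence \eqref{eq:exact_sequence} to get $f\in M^!_{\ell,L^-}$, and finish with Borcherds' Theorem 13.3. But the step where you prove $\Lambda(\xi_\ell(f))=0$ is where your route genuinely diverges from the paper's, and it is also where your argument has a real gap. You only use that $\psi=\Phi_L(f)+\log\|F\|^2_{\mathrm{Pet}}$ is \emph{smooth} and then try to extract $\|\Lambda(\xi_\ell(f))\|_{L^2}^2=0$ by pairing $dd^c\psi$ against $\Lambda(\xi_\ell(f))$ and integrating by parts on the non-compact variety $X_K$. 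That integration by parts is exactly the unproved point: you need (a) that $\Lambda(\xi_\ell(f))$ is coclosed (harmonicity of the Kudla--Millson lift of a cusp form is not established anywhere in the paper and is not free in the non-compact case), and (b) decay of $d^c\psi\wedge\ast\Lambda(\xi_\ell(f))$ at the boundary, where both $\Phi_L(f)$ and $\log\|F\|_{\mathrm{Pet}}$ grow. You flag this as ``the main obstacle'' but do not resolve it, and resolving it would require essentially the same analytic work as the theorem you are trying to avoid. The auxiliary claim $\int_{X_K}\Lambda(\xi_\ell(f))\wedge\Omega^{p-1}=0$ via Siegel--Weil is plausible but is likewise not set up in the paper and would need its own justification.

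The paper sidesteps all of this by proving a much stronger input, Theorem \ref{thm:weak_converse} (the adelic version of \cite{Br1}, Thm.\ 4.23): there is an $f\in H^+_{\ell,L^-}$ with the prescribed principal part for which the \emph{pointwise} identity $\Phi_L(f)(z,h_i)=-2\log\|F(z,h_i)\|_{\mathrm{Pet},r/2}+c_i$ holds on each connected component. Applying $dd^c$ to this identity away from the divisor gives $dd^c\Phi_L(f)=c^+(0,0)\,\Omega$ there, and comparing with the pointwise current identity of Theorem \ref{thm:rel_theta_lifts} (the adelic version of \cite{BF1}, Thm.\ 6.1, which has no $\delta$-term because it is stated off the divisor) yields $\Lambda(\xi_\ell(f))=0$ directly, with no global integration, no harmonicity of the lift, and no boundary analysis. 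Your proposal also omits the reduction that occupies the paper's actual proof of the corollary: since $A$ is anisotropic, $L$ is maximal (Nikulin), so for the choice of $K$ in \eqref{def:open_compact_GSpin} the Shimura variety $X_K$ is connected and $H(\A_f)=H(\Q)^+K$; this is what lets one pass between a classical form $F$ on $D^+$ for $\Gamma(L)$ and the adelic modular forms of Definition \ref{def:orth_mod_form_D_A_f} to which Theorem \ref{thm:surjectivity_borcherds_lift} applies. To repair your write-up, replace the $L^2$ pairing argument by the pointwise comparison, i.e.\ prove (or cite) the analogues of \cite{Br1}, Thm.\ 4.23 and \cite{BF1}, Thm.\ 6.1 in the adelic setting, and add the connectedness/maximality reduction.
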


\section{Preliminaries and notations}\label{sec:preliminaries}
In this section we fix some notation, which will be used this way throughout the paper unless it is stated otherwise, and recall some basic facts, which will be vital for the rest of the paper. For details the reader may consult \cite{BF} and \cite{BF1}. 

We start with a non-degenerate lattice $(L,(\cdot,\cdot))$ of type $(p,q)$ and even rank $m=p+q$. The signature of $L$ is defined by $\sig(L)= p-q$. Associated to the bilinear form $(\cdot,\cdot)$ we have the quadratic form $Q(\cdot) = \frac{1}{2}(\cdot,\cdot)$. We assume that $L$ is even, i. e. $Q(x) \in \Z$ for all $x\in L$.
Let 
\[
L':=\{x\in V=L\otimes \Q\; :\; (x,y)\in\Z\quad \text{ for all } \; y\in L\}
\]
be the dual lattice of the lattice even $L$. 
Since $L\subset L'$, the elementary divisor theorem implies that $L'/L$ is a finite group. We denote  this group by $A$. The modulo 1 reduction of both, the bilinear form $(\cdot, \cdot)$ and the associated quadratic form, defines a $\Q/\Z$-valued bilinear form $(\cdot,\cdot)$ with corresponding $\Q/2\Z$-valued quadratic form on $A$. We call $A$ combined with $(\cdot,\cdot)$ a discriminant group or a quadratic module. By $\sig(A) =\sig(L) = p-q$ we denote the signature of $A$.
We call $A$ anisotropic if $q(\mu) = 0$ holds only for $\mu=0$.
It is well known that any discriminant group can be decomposed into a direct sum of quadratic modules of the following form (cf. \cite{BEF})
\begin{equation}\label{eq:anisotropic_p_groups}
  \begin{split}
  &  \calA_{p^k}^t = \left(\Z/p^k\Z, \;\frac{tx^2}{p^k}\right),\; p> 2,\quad \calA_{2^k}^t=\left((\Z/2^k\Z,\; \frac{tx^2}{2^{k+1}}\right),\\
  & \calB_{2^k} = \left(\Z/2^k\Z\oplus\Z/2^k\Z;\; \frac{x^2+2xy+y^2}{2^k}\right),\quad \calC_{2^k} = \left(\Z/2^k\Z\oplus\Z/2^k\Z;\; \frac{xy}{2^k}\right).
    \end{split}
  \end{equation}
The structure of anisotropic finite quadratic modules is well known: In particular, for an odd prime $p$  each $p$-group $A_p$ of a discriminant form  $A$ can be either written as
\begin{equation}\label{eq:anisotropic_finite_modules}
  \calA_p^t \text{ or as a direct sum } \calA_p^t\oplus \calA_p^1.
\end{equation}
  For further details we refer to  \cite{BEF}. 

  We put $V=V(\Q)=L\otimes \Q$ and choose an orthogonal basis $\{v_i\}$ of $V(\R)=L\otimes \R$ such that $(v_\alpha, v_\alpha) = 1,\; \alpha = 1,\dots, p$ and $(v_\mu, v_\mu) = -1$ for $\mu=p+1,\dots,m$. For $x\in V$ we may write
\[
x = \sum_{\alpha = 1}^p x_\alpha v_\alpha + \sum_{\mu=p+1}^mx_\mu v_\mu
\]
in terms of its coordinates with respect to the basis $\{v_i\}$ such that
\begin{equation}\label{eq:quadratic_form_basis}
  (x,x) = \sum_{\alpha = 1}^p x_\alpha^2 - \sum_{\mu=p+1}^m x_\mu^2.
\end{equation}
By $r_0\in\Z$ we mean the {\it Witt index} of $V$, i. e. the dimension of a maximal isotropic subspace of $V$. 
Now pick the fixed subspace
\begin{equation}\label{eq:z_0}
  z_0=\Span\{v_\mu\; |\; \mu=p+1,\dots,m\}
  \end{equation}
and let
\begin{equation}\label{eq:D_+}
  D=\{z\subset V\;|\; \dim(z) = q, \; (\cdot,\cdot)_{|z} < 0\}
\end{equation}
the Grassmannian of oriented $q$- planes in $V(\R)$. It is well known that $D$ is a real analytic manifold. We denote by
\[
\calA^q(D) \text{ and } \calZ^q(D)
\]
the smooth differential $q$- forms and the smooth closed differential $q$- forms, respectively, on $D$.    
Note that $D$ has two connected components
\[
D = D^+\sqcup D^-
\]
given by the two possible choices of an orientation. 
Clearly, the orthogonal group $G(\R)=O(V(\R))$  acts on $D$. We denote by $K_\infty$ the subgroup of $G(\R)$ which stabilizes $z_0$. As $G(\R)$ acts transitively on $D$, we have  $G(\R)/K_\infty \cong D$. Note that $K_\infty\cong O(p)\times O(q)$.
Let $S(V(\R))$ be the space of Schwartz functions on $V(\R)$ and $G'(1)(\R) = \SL_2(\R)$. The Weil representation $\omega_\infty$ in the Schr\"odinger model is a representation of $G'(1)(\R)\times G(\R)$  on $S(V(\R))$ (see e. g. \cite{We1}, \cite{Ku3} or \cite{St1}). The group $G(\R)$ acts on $S(V(\R))$ in a natural way by
\begin{equation}\label{eq:weil_rep_orth_group}
  \omega(g)\varphi(x) = \varphi(g^{-1}x).
\end{equation}
It suffices to describe the action of $G'(1)(\R)$ on its generators:
\begin{equation}\label{eq:weil_rep_sl_2}
  \begin{split}
    &    \omega_\infty(m(a))\varphi(x) = a^{m/2}\varphi(ax) \text{ for } a> 0 \text{ and } m(a) = \kzxz{a}{0}{0}{a^{-1}},  \\
    & \omega_\infty(n(b))\varphi(x) = e^{\pi ib(x,x)}\varphi(x) \text{ with } n(b) = \kzxz{1}{b}{0}{1} \text{ and } \\
    & \omega_\infty(S)\varphi(x) = e(-\sig(L)/8)\widehat{\varphi}(-x) \text{ with } S= \kzxz{0}{-1}{1}{0},
  \end{split}
\end{equation}
where $\widehat{\varphi}(y) = \int_{V(\R)}\varphi(t)e^{2\pi i (x,y)}dt$ is the Fourier transform of $\varphi$. Note that $\omega_\infty$ can be defined in a much more general adelic setting as a representation of $G'(n)(\A)\times G(\A)$ on the space of Schwartz-Bruhat functions $S(V(\A)^n)$, where $\A$ is the ring of adeles of $\Q$ and $G'(n)(\A) = \Sp(n,\A)$ (see the literature cited above). We denote this more general Weil representation with $\omega$ and use the notation $K'(n) = K'(n)_\infty\prod_pK'(n)_p$ with $K'(n)_p= G'(n)(\Z_p)$ for the maximal compact subgroup in $G'(n)(\A)$. Here,
\begin{equation}\label{eq:max_comp_SP}
  K'(n)_\infty = \left\{k=\zxz{a}{b}{-b}{a}\in \Sp(n,\R) \; \bigg\vert\; {\bf k} = a + ib\in U(n)\right\},
  \end{equation}
where $U(n)$ means the unitary group. 

Theta series  will play vital role in this paper. Associated to $\varphi\in S(V(\R))$ and $\mu\in A$ we define
\begin{equation}\label{eq:theta_schwarz_func}
\vartheta(g',\varphi,\mu) = \sum_{\lambda\in \mu + L}\omega_\infty(g')\varphi(\lambda).
\end{equation}
As $\omega_\infty(g')\varphi$ is rapidly decreasing, $\vartheta(g',\varphi,\mu)$ is well defined. 
The standard majorant $(\cdot,\cdot)_z$  associated to $z\in D$ is defined by
\begin{equation}\label{eq:standard_majorant}
  (x,x)_z = (x_{z^\perp}, x_{z^\perp}) - (x_z,x_z),
\end{equation}
where $x = x_{z^\perp}+ x_z$ is the decomposition of $x$ with respect to $V=z^\perp + z$.
Now let the standard Gaussian on $V(\R)\times D$ be
  \begin{equation}\label{eq:standard_gaussion_1_variable}
  \varphi_\infty^{p,q}(x,z) = \exp\left(-\pi(x,x)_z\right),
  \end{equation}
  $(p,q)$ emphasizing the type of $Q$. Since  $(x,x)_z$ is positive for all $x\in V$, the Gaussian is rapidly decreasing and thus an element of $S(V(\R))$.
  More generally, for ${\bf x}=(x_1,\dots,x_n)\in V(\R)^n$ 
  \begin{equation}\label{eq:standard_gaussion_n_variables}
  \varphi_\infty^{p,q}({\bf x},z) = \exp\left(-\pi\sum_{i=1}^n(x_i,x_i)_z\right)
  \end{equation}
  is the standard Gaussian on $V(\R)^n\times D$. 
  
  We choose for $g'\in G'(1)(\R)$ the matrix $g_\tau = n(u)m(\sqrt{v})$. It  moves the base point $i$ to the element $\tau=u+iv$ in the upper half plane $\H$. With this choice of $g'$ and $\varphi$ we can define a Siegel theta function by means of $\vartheta$:
  \begin{equation}\label{eq:theta_gaussian}
    \begin{split}
      \theta(\tau,z,\mu) &= \vartheta(g_\tau,\mu,\varphi_\infty^{p,q}(\cdot,z))\\
      & = \sum_{\lambda\in L+\mu}\varphi_\infty^{p,q}(\lambda,\tau,z).
      \end{split}
  \end{equation}
  with
  \begin{equation}\label{eq:gaussian}
    \begin{split}
    \varphi_\infty^{p,q}(\lambda,\tau,z) &= v^{q/2}\exp\left(\pi i((\lambda,\lambda)u+(\lambda,\lambda)_ziv)\right)\\
    &=v^{q/2}e\left(Q(\lambda_{z^\perp})\tau + Q(\lambda_z)\overline{\tau}\right)
    \end{split}
  \end{equation}
  The last equation of \eqref{eq:theta_gaussian} can be obtained immediately by employing the explicit formulas of the Weil representation (cf.\eqref{eq:weil_rep_sl_2}). 
  It can be shown (see e. g. \cite{B}, Theorem 4.1) that the vector valued theta series
  \begin{equation}
    \Theta_A(\tau,z,\varphi_\infty^{p,p}) = \sum_{\mu\in A}\theta(\tau,z,\mu)
  \end{equation}
  is a real-analytic function with respect to both variables $\tau$ and $z$. Additionally, it transforms with respect to the modular group
\[
\Gamma=\SL_2(\Z)
\]
like a vector valued modular form of weight $(p-q)/2$ for the finite Weil representation (see Section \ref{subsec:weak_maas_forms} for the definition of vector valued modular forms) and is invariant under the action of $SO(V)(\R)$.  
Later in the paper, we will utilize the notation $\mathbbm{1}_M$ for the characteristic function of a set $M$ and we denote by $\iota$ the embedding
\begin{equation}\label{eq:embed_sl}
   \iota: G'(1)\times G'(1) \rightarrow G'(2), \quad \iota\left(\begin{pmatrix}a&b\\c&d\end{pmatrix},\begin{pmatrix}a'&b'\\c'&d'\end{pmatrix}\right)=\begin{pmatrix}a&0&b&0\\ 0&a'&0&b'\\c&0&d&0\\0&c'&0&d'\end{pmatrix}. 
\end{equation}
In some sections of the present work the variable $p$ has two meanings. On the hand, it is part of the type $(p,q)$ of the lattice $L$. On the other hand, $p$ stands for a prime $p$ parametrizing a local Archimedean place. However, there is no danger of confusion since it is always clear from the context what the meaning of $p$ is. 

\section{The finite Weil representation and weak Maass forms}\label{subsec:weak_maas_forms}
In this section we recapitulate the background material to define harmonic weak Maass forms as introduced e. g. \cite{BF1}.  We assume the conventions and  notation from Section \ref{sec:preliminaries}. In \cite{Br1} Bruinier explained how to extended the Borcherds lift to this type of modular forms. They can be seen as a natural generalization of weak holomorphic modular forms transforming according to the finite Weil representation. Later on, weak Maass forms will play a crucial role in the proof of the subjectivity of the Borcherds lift.

Recall from Section \ref{sec:preliminaries} that the discriminant group $A=L'/L$ equipped with the modulo 1 reduction of $Q$ defines a quadratic module. 
Associated to $A$ there is  a representation $\rho_A$ of $\Gamma$ on the group ring $\C[A]$, which we call the ``finite'' Weil representation.  We denote the standard basis of $\C[A]$  by $\{\frake_{\mu}\}_{\mu\in A}$.
The standard scalar product on $\C[A]$ is given by
  \begin{equation}\label{eq:scalar_product_group_ring}
    \left\langle \sum_{\mu\in A}a_\mu\frake_\mu,\sum_{\mu\in A}b_\mu\frake_\mu \right\rangle =\sum_{\mu\in A}a_\mu \overline{b_\mu}.
  \end{equation}
  Note that the group rings $\C[A^2]$ and $\C[A]$ are related by the following isomorphism
  \begin{equation}\label{eq:tensorproduct_group_ring}
    \C[A^2]\longrightarrow \C[A]\otimes\C[A],\quad \frake_{(\mu,\nu)}\mapsto \frake_\mu\otimes\frake_\nu.
  \end{equation}
  This will be used later in  Section \ref{sec:inj_kudla_millson}. 
  
As $\Gamma$ is generated by the matrices
\begin{equation}\label{eq:symplectic_group_gen}
S=\zxz{0}{-1}{1}{0},\quad T= \zxz{1}{1}{0}{1}, 
\end{equation}
it is sufficient to define $\rho_A$ by the action on these  generators.

\begin{definition}\label{def:weil_finite_repr}
  The representation $\rho_{A}$ of $\Gamma$ on $\C[A]$, defined by
  \begin{equation}\label{eq:weil_finite_repr}
\begin{split}
&\rho_A(T)\frake_\mu := e(bQ(\mu))\frake_\mu,\\
& \rho_A(S)\frake_\mu:=\frac{e(-\sig(A)/8)}{|A|^{1/2}}\sum_{\nu\in A}e(-(\nu,\mu))\frake_{\nu},
\end{split}
\end{equation}
  is called Weil representation. Note that $\rho_A$ can be identified with a subrepresentation of the Weil representation $\omega$.  We denote the dual representation of $\rho_A$ by $\rho_A^*$. It is obtained from $\rho_A$ by passing from $(L,(\cdot,\cdot))$ to $(L,-(\cdot,\cdot))$ or simply by complex conjugation when considered as matrices. Thus $\displaystyle \rho_A^* = \rho_{A^-} = \overline{\rho_A}$,
  where $A^-$ means the discriminant group $A$ equipped with $-(\cdot,\cdot)$. 
\end{definition}
Let $Z=\kzxz{-1}{0}{0}{-1}$. The action of $Z$ is given by
\begin{equation}\label{eq:weilz}
\rho_A(Z)(\frake_\mu)=e(-\sig(A)/4)\frake_{-\mu}.
\end{equation}
We denote by $N$ the level of the lattice $L$. It is the smallest positive integer such that $NQ(\lambda)\in \Z$ for all $\lambda\in L'$. For the rest of this paper we suppose that $N$ is {\it odd}.
For later use we introduce a Gauss sum associated to the discriminant group $A$. For an integer $d$ we write
\begin{equation}\label{eq:Gauss_sum}
  g_d(A) = \sum_{\mu\in A}e(dQ(\mu))
  \end{equation}
and put $g(A) = g_1(A)$. By Milgram's formula we have $g(A)= \sqrt{|A|}e(\sig(A)/8)$.

We now define vector valued modular forms of type $\rho_A$. With respect
to the standard basis of $\C[A]$ a function $f: \H\rightarrow \C[A]$ can be written in the form
\[
f(\tau) = \sum_{\mu\in A}f_\mu(\tau)\frake_\mu.
\]
The following operator generalises the usual Petersson slash operator to the space of all those functions. For $\kappa\in \Z$ we put
\begin{equation}\label{eq:slash_operator}
f\mid_{\kappa,A}\gamma =
j(\gamma,\tau)^{-\kappa}\rho_A(\gamma)^{-1}f(\gamma\tau),
\end{equation}
where
\[
j(g,\tau) = \det(g)^{-1/2}(c\tau+d)
\]
is the usual automorphy factor if $g=\kzxz{a}{b}{c}{d}\in \GL_2^+(\R)$.

A holomorphic function $f: \H\rightarrow \C[A]$ is called a {\it weakly holomorphic} modular
form of weight $ \kappa $ and type $ \rho_A $ for $ \Gamma $ if
$ f\mid_{\kappa,A}\gamma= f $ for all $\gamma\in \Gamma$,
and if $f$ is meromorphic  at the cusp $\infty$. Here the last condition means that $f$ admits a Fourier expansion of the form
\begin{equation}\label{eq:fourier_exp_weakly_holom}
\sum_{\mu\in A}\sum_{n\in \Z+Q(\mu)}c(\mu,n)e(n\tau),
\end{equation}
where all but finitely many Fourier coefficients $c(\mu, n)$ with $n <  0$ vanish. If in addition $c(\mu,n) = 0$ for all $n < 0$ ($n\le 0$), we call the corresponding modular form {\it holomorphic} (a {\it cusp form}). We denote by $M^!_{\kappa,A}$, the
space of all weakly holomorphic modular forms, by $M_{\kappa,A}$ the space of holomorphic modular forms and  by $S_{\kappa,A}$ the subspace cusp forms. We write $M^!_{\kappa,A^-}, M_{\kappa,A^-}$ and $S_{\kappa,A^-}$ for the corresponding spaces with respect to the dual Weil representation $\rho_{A^-}$.  For more details see e.g. \cite{Br1}. Note that formula \eqref{eq:weilz}
implies that $M_{k,A} = \{0\}$ unless
\begin{equation}\label{eq:sig_weight}
2\kappa\equiv \sig(L)\pmod{2}.
\end{equation}
Therefore, if the signature of $L$ is even, only non-trivial spaces of integral
weight can occur. 

The Petersson scalar product on $S_{\kappa,A}$ is given by
\begin{equation}\label{eq:petersson_scalar}
(f,g) =
\int_{\Gamma\backslash \H}\langle f(\tau), g(\tau)\rangle \im \tau^\kappa d\mu(\tau)
\end{equation}
where
\[
d\mu(\tau) = \frac{du\;dv}{v^2}
\]
denotes the hyperbolic volume element with  $\displaystyle \tau = u + iv$.
In view of \eqref{eq:tensorproduct_group_ring} we have for $f\in S_{\kappa_1,A}$ and $g\in S_{\kappa_2,A}$  that
\begin{equation}\label{eq:tensor_product_forms}
  f\otimes g = \sum_{\mu,\nu\in A}f_\mu g_\nu \frake_\mu\otimes\frake_\nu \in S_{\kappa_1+\kappa_2,A^2}. 
\end{equation}
  
Following \cite{BF1}, a  harmonic weak Maass forms of weight $\kappa$ with representation $\rho_A$ is a twice continuously differentiable function $f: \H\longrightarrow \C$ such that
\begin{enumerate}
\item[i)]
  $f\mid_{\kappa,A} \gamma = f$ for all $\gamma \in \Gamma$, 
\item[ii)]
  $\Delta_\kappa f = 0$, where
  \begin{equation}\label{eq:laplace_op_wt_k}
    \Delta_\kappa = - v^2\left(\frac{\partial^2}{\partial u^2} + \frac{\partial^2}{\partial v^2}\right) + iky\left(\frac{\partial}{\partial u} + i\frac{\partial}{\partial v}\right) 
  \end{equation}
  is the hyperbolic  Laplace operator of weight $\kappa$.
\item[iii)]
  $f(\tau) = O(e^{\epsilon v})$ for $v \rightarrow \infty$ for some $\epsilon > 0$ (uniformly in $u$). 
\end{enumerate}
By $H_{\kappa,A}$ we mean the space of weak Maass forms. According to \cite{BF1} each such Maass form possesses a unique decomposition $f= f^+ + f^-$ into a homomorphic part $f^+$ and a non-holomorphic part $f^-$ having Fourier expansions of the form
\begin{align*}
 & f^+(\tau) = \sum_{\mu\in A}\sum_{\substack{n\in \Z+Q(\mu)\\n\gg -\infty}}c^+(\mu,n)e(n\tau) \text{ and }\\
 & f^-(\tau) =  \sum_{\mu\in A}\left(c^-(\mu,0) + \sum_{\substack{n\in \Z+Q(\mu)\bs\{0\}\\n\ll \infty}}c^-(\mu,n)H_\kappa(2\pi n v)e(n u) \right).
  \end{align*}
Here $\displaystyle H_\kappa(w)$ is given by $e^{-w}\int_{-2w}^\infty e^{-t}t^{-\kappa}dt$. For  $w<0$ we have $H_\kappa(w) = e^{-w}\Gamma(1-\kappa,2w)$, where $\Gamma(a,x)$ means the incomplete Gamma function. If $w>0$, the integral converges only for $\kappa<1$. But it can be analytically continued to all $\kappa\in \C$ the same way as the Gamma function.

For $\tau=u+iv\in\H$ we define the Maass lowering and raising operators of weight $\kappa$  on non-holomorphic modular forms by
\begin{equation}\label{eq:maass_lowering_raising}
L_\kappa = -2iv^2\frac{\partial}{\partial \overline{\tau}},\quad R_\kappa = 2i\frac{\partial}{\partial \tau} + \kappa v^{-1}.
\end{equation}
It is shown in \cite{BF1}, Prop. 3.2, that the differential operator $\displaystyle \xi_\kappa(f)(\tau) = v^{\kappa-2}\overline{L_\kappa(f)(\tau)}$ maps weak Maass forms of weight $\kappa$ to weakly holomorphic modular forms of weight $2-\kappa$ transforming with $\rho_{A^-}$ . Denote with $H^+_{\kappa,A}$ the subspace of weak Maass forms which are mapped by $\xi_\kappa$ to the space of cusp forms $S_{2-\kappa,A^-}$. It is proved in \cite{BF1}, Theorem 3.7, that $\xi_\kappa$ is surjective, which implies that the sequence 
\begin{equation}\label{eq:exact_sequence}
  0 \longrightarrow M^!_{\kappa,A}\longrightarrow H^+_{\kappa,A}\overset{\xi_\kappa}{\longrightarrow}  S_{2-\kappa,A^-}\longrightarrow 0.
\end{equation}
is exact.
By means of \cite{BF1}, Lemma 3.1, we see that  the Fourier expansion of $f\in H_{\kappa,A}^+$ is given by
\begin{equation}\label{eq:fourier_exp_xi_k}
  f(\tau) = \sum_{\mu\in A}\left(\sum_{\substack{n\in \Z+Q(\mu)\\n\gg -\infty}}c^+(\mu,n)e(n\tau) + \sum_{\substack{n\in \Z+Q(\mu)\bs\{0\}\\n< 0}}c^-(\mu,n)\Gamma(1-\kappa,4\pi |n| v)e(n \tau)\right).
  \end{equation}
The principal part of such a Maass form $f$ is then given by
\begin{equation}\label{eq:principal_part_weak_maass}
  P_f(\tau) = \sum_{\mu\in A}\sum_{\substack{n\in \Z+Q(\mu)\\n<0}}c^+(\mu,n)e(n\tau).
\end{equation}

\section{Special Schwartz forms}\label{subsec:schwarz_forms} 
 In this paper the Kudla-Millson form $\varphi_{\KM}$ (\cite{KM1}) and the more general Schwartz form $\varphi_{q,l}$ (\cite{FM}) play a prominent role as they constitute the theta  kernel of the Kudla-Millson lift $\Lambda$.  The proof of the injectivity of $\Lambda$ makes use of the related Schwarz functions $\phi_{q,l}$ and $\xi$ and some of their properties.

Schwartz forms are a generalization of Schwartz functions. In this section we review the construction of the Schwartz forms $\varphi_{\KM}$, $\varphi_{q,l}$ and the aforementioned Schwartz functions $\phi_{q,l}$ and $\xi$. We additionally list some of their fundamental properties. Our main sources are \cite{KM1}, \cite{FM} and \cite{BF}.

Schwartz forms are Schwartz functions on $V$ with values in the differential $q$-forms on the Grassmannian $D$. They can be considered as elements of $S(V)\otimes \calA^{q}(D)$. Note that  $G(\R)$ acts  on elements of $S(V)\otimes \calA^q(D)$ via
\begin{equation}\label{eq:action_G}
L_g^*\varphi(gx,z),
\end{equation}
where $L_g^*$ means the pullback on  $\calA^q(D)$ induced by left translations of $G(\R)$ on $D$. 

The construction of the Kudla-Millson Schwartz form $\varphi_{\KM}$ makes use of the isomorphism
\begin{equation}\label{eq:eval_basepoint}
\left[S(V)\otimes \calA^r(D)\right]^{G(\R)} \cong \left[S(V)\otimes \bigwedge^r\frakp^*\right]^{K_\infty},
\end{equation}
which is given by mapping any $\varphi(x,z)\in \left[S(V)\otimes \calA^r(D)\right]^{G(\R)}$ to $\varphi(x, z_0)\in \left[S(V)\otimes \bigwedge^r\frakp^*\right]^{K_\infty}$, where $z_0\in D$ denotes a fixed base point 
 and  $\frakp$ is part of the Cartan decomposition  $\frakg = \frakp + \frakt$  of the Lie algebra $\frakg$ of $G(\R)$ with $\frakt$ being the Lie algebra of $K_\infty$. The isomorphism \eqref{eq:eval_basepoint} is based on the well known fact (see e. g. \cite{He}, Chap. IV, Sec. 3 ) that $\frakg/\frakt\cong\frakp$ is isomorphic to the tangent space $T(D)_{z_0}$ at the chosen base point $z_0$.  As usual, $\frakp^*$ means the dual space of $\frakp$ and $\bigwedge^r$ the $r$-fold exterior product of $\frakp^*$. 
In view of \ref{eq:eval_basepoint}, it suffices to specify $\varphi_{\KM}$ on $\frakp^*$. To this end, we utilize the explicit description of $\frakp$ by 
\[
\left\{\zxz{0}{X}{X^t}{0}\;\bigg\vert \; X\in M_{p,q}(\R)\right\}\cong M_{p,q}(\R). 
  \]
  By means of this isomorphism, the standard basis of $M_{p,q}(\R)$ gives rise to  a basis $X_{\alpha,\mu}$ of $\frakp$ with respect to the above chosen basis of $V$.  For $1\le \alpha\le p$ and $p+1\le \mu\le p + q$ we have
  \[
  X_{\alpha,\mu}(v_i) = \begin{cases}v_\mu,& i = \alpha,\\ v_\alpha, & i=\mu,\\ 0,& \text{otherwise}.\end{cases}
  \]
 Moreover, by $\{\omega_{\alpha,\mu}\;|\;\alpha=1,\dots, p \text{ and } \mu = p+1,\dots, p+q \}\subset  \frakp^*$ we mean the associated dual basis. 

Let $\varphi_{\infty}^{p,q}$ be the standard Gaussian (see Section \ref{sec:preliminaries}). For the rest of this section we drop the superscript $p,q$ to lighten the notation. It can be verified that
\[
\varphi_\infty(gx,gz) = \varphi_\infty(x, z)
\]
for all $g\in G(\R)$, i. e.
\[
\varphi_\infty\in \left[S(V)\otimes C^\infty(D)\right]^{G(\R)}.
\]
For the sake of clarity we write
\begin{equation}\label{eq:gaussian_base_point}
  \varphi_0(x) \text{ instead of } \varphi_\infty(x,z_0).
\end{equation}
  Then $\varphi_{\KM}$ is defined by (see \cite{KM1}, Chap. 3 and Chap. 5 and \cite{FM}, Chap. 5.2) by applying the Howe operator
\begin{equation}\label{eq:kudla_millson_diff_op}
\begin{split}
  &  \calD: S(V)\otimes {\bigwedge}^{\bullet}(\frakp^*)\longrightarrow  S(V)\otimes {\bigwedge}^{\bullet + q}(\frakp^*),\\
&  \calD = \frac{1}{2^{q/2}}\prod_{\mu=p+1}^m\left[\sum_{\alpha = 1}^p\left(x_\alpha - \frac{1}{2\pi}\frac{\partial}{\partial x_\alpha}\right)\otimes A(\omega_{\alpha\mu})\right]
\end{split}
\end{equation}
to $\varphi_0\otimes 1\in \left[S(V)\otimes {\bigwedge}^0(\frakp^*)\right]^K$:
\begin{equation}\label{eq:kudla_millson_form}
\varphi_{KM}  = \calD(\varphi_0\otimes 1).
\end{equation}
Here    $A(\omega_{\alpha\mu})$ denotes the exterior left multiplication by $\omega_{\alpha \mu}$. 


The definition of the more general Schwartz form $\varphi_{q,l}$ can be found in \cite{FM}, Chap. 5.2. (for $n=1$ in our case). As a differential form on $D$,  $\varphi_{q,l}$  takes values in $S(V)\otimes \Sym^l(V)$, where $\Sym^l(V)$ is the $l$-th symmetric power of $V$. As pointed out in \cite{BMM}, Lemma 8.2 and Theorem 8.3, the isomorphism in \eqref{eq:eval_basepoint} extends to these more general forms. We have
\begin{equation}\label{eq:eval_basepoint_vec_val}
  \left[S(V)\otimes \calA^q(D)\otimes \Sym^l(V)\right]^{G(\R)}\cong \left[S(V)\otimes \bigwedge^q \frakp^\ast \otimes \Sym^l(V)\right]^{K_\infty}.
  \end{equation}
Consider the operator 
\begin{equation}\label{eq:funke_millson_diff_op}
\calD' = \left[\frac{1}{2}\sum_{\alpha=1}^p\left(x_\alpha - \frac{1}{2\pi}\frac{\partial}{\partial x_\alpha}\right)\otimes 1\otimes A(v_\alpha)\right]^l \in \End_\C\left(S(V)\otimes {\bigwedge}^\bullet(\frakp^*)\otimes \Sym^l(V)\right),
\end{equation}
where $A(v_\alpha)$ means the multiplication in $\Sym^l(V)$ by $v_\alpha$. 
Then $\varphi_{q,l}$
is obtained from $\varphi_{\KM} \in \left[S(V)\otimes \bigwedge^q \frakp^\ast\right]^{K_\infty}$ by
\begin{equation}\label{eq:funke_millson_form}
  \varphi_{q,l} = \calD'(\varphi_{KM}).
\end{equation}
If $l$ is equal to zero, $\varphi_{q,l}$ simplifies to $\varphi_{\KM}$.
Setting $\calD_\alpha = \left(x_\alpha - \frac{1}{2\pi}\frac{\partial}{\partial x_\alpha}\right)$, we may write $\varphi_{q,l}$ in a more explicit form
\begin{equation}\label{eq:varphi_q_l}
\varphi_{q,l}(x)=  \sum_{\substack{\alpha_1,\dots,\alpha_q=1\\\beta_1,\dots,\beta_l=1}}^p \prod_{i=1}^q\prod_{j=1}^l \calD_{\alpha_i}\calD_{\beta_j}\varphi_0(x)\otimes\left( \omega_{\alpha_1 p+1}\wedge \dots \wedge \omega_{\alpha_q p+q}\right) \otimes \left(v_{\beta_1}\otimes \dots \otimes v_{\beta_l}\right).
\end{equation}


The following theorem summarizes the most fundamental properties of $\varphi_{q,l}$ (and thereby of $\varphi_{\KM}$). They are proved in \cite{FM}, Section 6 and \cite{KM1}, Thm. 3.1, see also \cite{BF} Section 3.
\begin{theorem}\label{thm:properties_varphi_q_l}
The Schwartz form $\varphi_{q,l}$ satisfies: 
\begin{enumerate}
\item[i)]
$\varphi_{q,l}$ is invariant under the action of $K_\infty$, i. e. $\varphi_{q,l}\in \left[S(V)\otimes {\bigwedge}^q(\frakp^*)\otimes \Sym^l(V)\right]^{K_\infty}$.
\item[ii)]
  $\varphi_{q,l}$ is an eigenvector of weight $\frac{m}{2}+l$ under the action of $K_\infty'(1)$, i. e.
  \[
  \omega(k)\varphi_{q,l} = \det({\bf k})^{\frac{m}{2}+l}\varphi_{q,l},
  \]
  ${\bf k} \in U(1)$ corresponding to $k\in K_\infty'(1)$ (cf. \eqref{eq:max_comp_SP}).
\item[iii)]
  $\varphi_{q,l}$ is a closed differential $q$- form on $D$ with values in $\Sym^l(V)$. 
\end{enumerate}
\end{theorem}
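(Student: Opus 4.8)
\textbf{Proof plan for Theorem \ref{thm:properties_varphi_q_l}.}

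The plan is to derive all three properties from the analogous, already-known properties of the Kudla-Millson form $\varphi_{\KM}$ together with the structure of the operator $\calD'$ in \eqref{eq:funke_millson_diff_op}, relying on the references \cite{KM1}, \cite{FM} and \cite{BF} for the $\varphi_{\KM}$ case. For part (i), recall that $\varphi_{\KM}\in\left[S(V)\otimes\bigwedge^q(\frakp^*)\right]^{K_\infty}$, and observe that the building block $\sum_{\alpha=1}^p\calD_\alpha\otimes 1\otimes A(v_\alpha)$ is manifestly $K_\infty$-equivariant: the sum runs over the full positive-definite index range $1\le\alpha\le p$, so under $k\in K_\infty\cong O(p)\times O(q)$ the action on the $\calD_\alpha$ (which transform like the coordinates $x_\alpha$, i.e. in the standard representation of $O(p)$) is exactly cancelled by the contragredient action on the $A(v_\alpha)$ in $\Sym^l(V)$, the $O(q)$-factor acting trivially on both. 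Since $\calD'$ is the $l$-th power of this invariant operator, $\calD'$ is $K_\infty$-equivariant, and hence $\varphi_{q,l}=\calD'(\varphi_{\KM})$ inherits $K_\infty$-invariance. I would phrase this via the commutation of $\calD'$ with the $K_\infty$-action on $S(V)\otimes\bigwedge^\bullet(\frakp^*)\otimes\Sym^l(V)$.

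For part (ii), the input is that $\varphi_{\KM}$ has weight $\frac{m}{2}$ under $K'_\infty(1)$, i.e. $\omega(k)\varphi_{\KM}=\det({\bf k})^{m/2}\varphi_{\KM}$. The key computation is the effect of $\calD'$ on the $K'_\infty(1)$-weight. This is where I would invoke the standard fact from \cite{FM} that each raising-type operator $\calD_\alpha=x_\alpha-\frac{1}{2\pi}\frac{\partial}{\partial x_\alpha}$, paired with the $\Sym^l(V)$-multiplication, intertwines the Weil representation $\omega$ of $\Sp(1)$ so as to raise the weight by exactly $1$: applying $\calD'=(\tfrac12\sum_\alpha\calD_\alpha\otimes 1\otimes A(v_\alpha))^l$ raises the $K'_\infty(1)$-weight by $l$. (Concretely, $\calD_\alpha$ corresponds under the Schrödinger model to the raising operator in the $\mathfrak{sl}_2$-triple, so that $\calD_\alpha$ sends a weight-$w$ vector to a weight-$(w+1)$ vector; this is precisely the mechanism by which $\varphi_{q,l}$ contributes to forms of weight $\frac{m}{2}+l$.) Combining the two yields $\omega(k)\varphi_{q,l}=\det({\bf k})^{m/2+l}\varphi_{q,l}$. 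I expect the care needed here is only in matching conventions for the Weil representation normalization so that the weight shift is $+l$ and not $-l$ or $l/2$; this is bookkeeping, not a genuine difficulty.

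For part (iii), that $\varphi_{q,l}$ is a closed $q$-form on $D$ valued in $\Sym^l(V)$: the $\Sym^l(V)$-valued and degree-$q$ statements are immediate from the shape of \eqref{eq:varphi_q_l}, since $\calD'$ attaches $l$ symmetric tensor factors and adds nothing to the exterior-algebra degree (the operator $\calD'$ acts as the identity on $\bigwedge^\bullet(\frakp^*)$). Closedness is the one point with content: it is the statement $d\varphi_{q,l}=0$, which is Theorem 6.2 (more precisely the cocycle property) established in \cite{FM}, building on the closedness of $\varphi_{\KM}$ in \cite{KM1}, Thm. 3.1. I would cite this directly rather than reprove it, noting that $d$ and $\calD'$ commute because $\calD'$ involves no differentiation in the $D$-directions (only the Schrödinger-model operators $\calD_\alpha$ in the $V$-variable and algebraic multiplications), so $d\varphi_{q,l}=\calD'(d\varphi_{\KM})=0$. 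The main obstacle, such as it is, is organizing the weight-shift argument in part (ii) with consistent conventions; parts (i) and (iii) are formal consequences of the operator $\calD'$ commuting with the relevant $K_\infty$- and $d$-actions together with the cited results for $\varphi_{\KM}$.
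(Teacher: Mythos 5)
The paper itself offers no proof of this theorem: it is stated with a bare citation to \cite{FM}, Section 6, \cite{KM1}, Thm.~3.1, and \cite{BF}, Section 3. Your reconstruction of parts (i) and (ii) is the standard argument and is correct: the sum $\sum_{\alpha=1}^p\calD_\alpha\otimes 1\otimes A(v_\alpha)$ is the $O(p)$-invariant pairing of the standard (self-dual) representation with itself, with $O(q)$ acting trivially, so $\calD'$ preserves $K_\infty$-invariance; and each application of the creation operator $\calD_\alpha$ in a positive-definite variable raises the $K'_\infty(1)$-weight by one, so $\calD'$ raises the weight of $\varphi_{\KM}$ from $\frac{m}{2}$ to $\frac{m}{2}+l$. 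This matches what the cited sources actually do.

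The one genuine flaw is your justification of closedness in part (iii). It is not true that $d$ and $\calD'$ commute "because $\calD'$ involves no differentiation in the $D$-directions." The operator $\calD'$ in \eqref{eq:funke_millson_diff_op} is defined only at the base point $z_0$, using the coordinates $x_1,\dots,x_p$ adapted to the decomposition $V=z_0^\perp\oplus z_0$; the form $\varphi_{q,l}$ on all of $D$ is obtained by $G(\R)$-equivariance through \eqref{eq:eval_basepoint_vec_val}. Consequently the exterior derivative, transported to the model $\left[S(V)\otimes\bigwedge^\bullet\frakp^*\otimes\Sym^l(V)\right]^{K_\infty}$, is computed by the homogeneous-space formula $d=\sum_{\alpha,\mu}A(\omega_{\alpha\mu})\otimes\omega(X_{\alpha\mu})$, where $\omega(X_{\alpha\mu})$ is the infinitesimal action \eqref{eq:weil_rep_orth_group} of $\frakp$ on the Schwartz-function factor. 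A direct computation gives $[\omega(X_{\alpha\mu}),\calD_\alpha]\neq 0$ (the vector field $x_\alpha\partial_{x_\mu}+x_\mu\partial_{x_\alpha}$ mixes positive- and negative-definite variables), so $d\varphi_{q,l}\neq\calD'(d\varphi_{\KM})$ in general, and closedness of $\varphi_{q,l}$ is \emph{not} a formal consequence of closedness of $\varphi_{\KM}$. This is precisely why \cite{FM} must redo the closedness computation for $\varphi_{q,l}$ rather than deduce it from \cite{KM1}. Since you also cite \cite{FM} directly for this point, your proof survives on the strength of the citation, but the supplementary commutation argument should be deleted or replaced by an honest reference to the computation in \cite{FM}, Section 6.
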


The Schwartz function $\phi_{q,l}\in S(V(\R)^2)$  is constructed by applying the Hodge star operator $\ast$  on $D$ to $\varphi_{q,l}$. More precisely,

\begin{equation}\label{eq:phi_q_l}
\phi_{q,l}((x_1,x_2),z)\mu =  \varphi_{q,l}(x_1,z)\wedge \ast(\varphi_{q,l})(x_2,z),
\end{equation}
where $\mu$ is the $G(\R)$-invariant volume form on $D$ induced by the Riemann metric coming from the Killing form on $\frakg$.

The definition of the Schwartz function $\xi$ is more involved:
Let $\frakg'$ be the complexified Lie algebra of $G'(2)(\R)$. The Harish-Chandra decomposition of $\frakg'$ is then given by
\[
\frakg' = \frakp_+ \oplus \frakp_- \oplus \frakt'
\]
with $\frakt'$ being the complexified Lie algebra of $K'(2)_\infty$. We have an explicit description of $\frakp_+$:
\[
\frakp_+ = \left\{p_+(X) = \frac{1}{2}\zxz{X}{iX}{iX}{-X}\;\bigg\vert\; X\in M_{2,2}(\C)\; X^t = X\right\}.
\]
It is generated by
\[
R_{11} = p_+\kzxz{1}{0}{0}{0},\quad R_{22} = p_+\kzxz{0}{0}{0}{1},\quad R_{12} = \frac{1}{2}p_+\kzxz{0}{1}{1}{0}.
\]
It is well known (see e. g. \cite{Bu}, Chap. 2.1 and Chap. 2.2) that $R_1 = \iota(R,0)$ and $R_2= \iota(0,R)$, where
\[
R = \frac{1}{2}\zxz{1}{i}{i}{-1}
\]
corresponds to the Maass raising operator.
Now we are in the position to specify the Schwartz function $\xi\in S(V(\R)^2)$. We set
\begin{equation}\label{eq:xi_q_l}
      \xi = \frac{p^l(-1)^{q+l}}{2^l\pi^{q+l}}\omega(\alpha_{q+l})\varphi_0
    \end{equation}
    with
    \[
    \alpha_{q+l} =
    \begin{cases}
      \left(R_{12}^2 - R_{11}^2R_{22}^2\right)^{\frac{q+l}{2}}, & \text{ if } q+l \text{ is even}, \\
      R_{12}\left(R_{12}^2 - R_{11}^2R_{22}^2\right)^{[\frac{q+l}{2}]}, & \text{ if } q+l \text{ is odd}
      \end{cases}
    \]
    being an element of  $\frakp_+$. 

    Note that the functions $\phi_{q,l}$ and $\xi$ are related by the identity
     \begin{equation}\label{eq:rel_xi_phi_q_l}
    \phi_{q,l} = \xi + \omega(R_{11})\omega(R_{22})\psi,
     \end{equation}
     where $\psi$ is specified in \cite{BF}, Prop. 3.9.
   
    
We close this section with
\begin{lemma}[\cite{BF}, Section 3]\label{lem:properties_xi}
We have
  \begin{enumerate}
  \item[i)]
    $\xi$ is $K_\infty$-invariant, i. e. an element in $\left[S(V(\R)^2)\otimes {\bigwedge}^0(\frakp^*)\right]^{K_\infty}$,
  \item[ii)]
    $\xi$ is a weight $\frac{m}{2}+l$ eigenvector with respect to the action of $K'_\infty(2)$ via the Weil representation.
  \item[iii)]
    $\xi$ vanishes identically if and only if $p=1$ and $q+l>1$. 
\end{enumerate}
\end{lemma}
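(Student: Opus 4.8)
The plan is to prove Lemma \ref{lem:properties_xi} by tracing the construction of $\xi$ through the operators defining it. Part i) follows because $\varphi_0 = \varphi_\infty(\cdot, z_0)$ is $K_\infty$-invariant (being the value at the base point of the $G(\R)$-invariant Gaussian, which only depends on the $O(p)\times O(q)$-invariant quantity $(x,x)_{z_0}$), and the operator $\omega(\alpha_{q+l})$ is an element of $\omega(\frakp_+)$ which commutes with the $K_\infty\subset G(\R)$ action, since $K'(2)$ and $G$ act on $S(V(\R)^2)$ by commuting Weil representations; hence $\xi$ is fixed by $K_\infty$ and, carrying no differential-form component, lies in $[S(V(\R)^2)\otimes \bigwedge^0(\frakp^*)]^{K_\infty}$. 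Part ii) is an eigenvalue bookkeeping: $\varphi_0$ is a weight $\tfrac{m}{2}$ eigenvector for $K'_\infty(2)$ (the Gaussian in two variables has this well-known weight, generalizing Theorem \ref{thm:properties_varphi_q_l}ii)), and each of the raising-type operators $R_{11}, R_{22}, R_{12}$ built from $\frakp_+$ raises the $K'_\infty(2)$-weight; the normalization in \eqref{eq:xi_q_l} is precisely chosen so that $\omega(\alpha_{q+l})$ raises the weight by $l$ (there is a cancellation in the even case between the weight contributions and the factor arrangement), giving total weight $\tfrac{m}{2}+l$. I would cite the relevant weight computations in \cite{BF}, Section 3, and \cite{FM} rather than redo them.

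The substantive part is iii), the vanishing criterion. The strategy is to use the relation \eqref{eq:rel_xi_phi_q_l}, namely $\phi_{q,l} = \xi + \omega(R_{11})\omega(R_{22})\psi$, together with the explicit formula \eqref{eq:phi_q_l} for $\phi_{q,l}$ as $\varphi_{q,l}(x_1,z)\wedge \ast \varphi_{q,l}(x_2,z)$. When $p=1$, the space $\bigwedge^q\frakp^*$ has dimension $\binom{pq}{q} = \binom{q}{q} = 1$ (since $\frakp \cong M_{1,q}(\R) \cong \R^q$), so $\varphi_{q,l}$ involves only the single top form $\omega_{1,p+1}\wedge\cdots\wedge\omega_{1,p+q}$. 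The point is that for $p=1$ the formula \eqref{eq:varphi_q_l} forces all indices $\alpha_i = 1$ and $\beta_j = 1$, so the $\Sym^l$-component of $\varphi_{q,l}$ is a pure power $v_1^{\otimes l}$ and the differential operators collapse to $\calD_1^{q+l}$ applied to $\varphi_0$. Computing $\varphi_{q,l}(x_1)\wedge \ast\varphi_{q,l}(x_2)$ in this degenerate case and comparing to $\omega(R_{11})\omega(R_{22})\psi$, one should find that the whole of $\phi_{q,l}$ is accounted for by the $\omega(R_{11})\omega(R_{22})\psi$ term whenever $q+l>1$, forcing $\xi=0$; conversely if $q+l\le 1$ (so $q+l=1$, i.e. $q=1,l=0$, since $q\ge 1$) the operator $\alpha_{q+l}=R_{12}$ acts nontrivially on the Gaussian and $\xi\ne 0$. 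For $p>1$ one argues that $\alpha_{q+l}$ involves $R_{11}^2, R_{22}^2$ and a genuinely different cross-term $R_{12}$, and acting on the two-variable Gaussian produces a nonzero polynomial-times-Gaussian; I would verify nonvanishing by, say, evaluating the resulting Schwartz function at a well-chosen point or extracting a leading polynomial coefficient.

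The main obstacle I anticipate is the bookkeeping in iii) for the $p=1$ case: one must carefully chase the normalizing constant $\tfrac{p^l(-1)^{q+l}}{2^l\pi^{q+l}}$ and the explicit form of $\psi$ from \cite{BF}, Prop. 3.9, to confirm the cancellation $\xi\equiv 0$ is exact rather than merely "to leading order." Concretely, since the ambient space $[S(V(\R)^2)\otimes\bigwedge^0\frakp^*]^{K_\infty}$ for $p=1$ is one-dimensional in its form-degree slot but the relevant identity is an equality of Schwartz functions, I would reduce to checking an identity of scalar functions of $(x_1,x_2)\in V(\R)^2$ and confirm that $\omega(R_{11})\omega(R_{22})$ applied to $\psi$ reproduces $\phi_{q,l}$ entirely. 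An alternative, cleaner route would be to observe that by Lemma \ref{lem:properties_xi}ii) and $K_\infty$-invariance, $\xi$ lies in a space whose dimension for $p=1$, $q+l>1$ can be shown to be zero directly via the structure of $[S(V(\R)^2)\otimes\bigwedge^q\frakp^*]^{K_\infty}$-type decompositions and the $\SL_2\times\SL_2$-weight constraint — if that dimension count is available from the cited literature, iii) follows without any constant-chasing. I would present whichever of these is shortest given the tools already assembled in \cite{BF} and \cite{FM}.
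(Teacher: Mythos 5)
First, a point of comparison: the paper offers no proof of this lemma at all --- it is imported wholesale from \cite{BF}, Section 3, where all three statements are read off from an explicit computation of $\omega(\alpha_{q+l})\varphi_0$. So your sketch has to stand on its own, and as written it has two genuine problems. Part i) is fine (the $\Sp(2,\R)$- and $O(V(\R))$-actions of the dual pair commute, so $\omega(\alpha_{q+l})$ preserves the $K_\infty$-invariance of the Gaussian). But in part ii) the bookkeeping is wrong at both ends: the two-variable Gaussian of signature $(p,q)$ is a $\det({\bf k})^{(p-q)/2}$-eigenvector of $K'_\infty(2)$, not a weight $\frac{m}{2}$ one (this is exactly why the associated theta series has weight $\frac{p-q}{2}$, and why the Howe operator --- a product of $q$ weight-raising factors --- is needed to lift $\varphi_{\KM}$ to weight $\frac{m}{2}$); and $\alpha_{q+l}$ raises the $\det$-weight by $q+l$, not by $l$, since each of $R_{12}^2$ and $R_{11}R_{22}$ shifts the $U(2)$-weight by $(2,2)$. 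The correct count is $\frac{p-q}{2}+(q+l)=\frac{m}{2}+l$; your two errors cancel numerically, but the argument as stated is not valid, and the claim that the scalar prefactor $\frac{p^l(-1)^{q+l}}{2^l\pi^{q+l}}$ is ``chosen so that $\omega(\alpha_{q+l})$ raises the weight by $l$'' cannot be repaired --- a scalar normalization has no effect on weights. (Note also that the displayed $\alpha_{q+l}$ is weight-inhomogeneous as printed; for ii) to hold one needs $R_{11}R_{22}$ in place of $R_{11}^2R_{22}^2$, as in \cite{BF}.)

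For part iii), the route through $\phi_{q,l}=\xi+\omega(R_{11})\omega(R_{22})\psi$ goes in the wrong direction: it requires the explicit form of $\psi$, it leaves the decisive claim (``one should find that the whole of $\phi_{q,l}$ is accounted for by $\omega(R_{11})\omega(R_{22})\psi$'') unproved, and it risks circularity because in \cite{BF} the identity \eqref{eq:rel_xi_phi_q_l} is itself derived from the explicit formulas for $\xi$ and $\psi$. The direct mechanism, which your sketch never reaches, is that $\omega(R_{12}^2-R_{11}R_{22})\varphi_0$ equals, up to constants and lower-order corrections, the Gram determinant $(u_1,u_1)(u_2,u_2)-(u_1,u_2)^2$ of the positive-definite projections $u_i=(x_i)_{z_0^\perp}\in z_0^\perp\cong\R^p$ times $\varphi_0$. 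This determinant vanishes identically exactly when $p=1$, which kills $\omega(\alpha_{q+l})\varphi_0$ whenever the operator contains at least one such factor, i.e.\ whenever $q+l\ge 2$; for $q+l=1$ one has $\alpha_1=R_{12}$, and $\omega(R_{12})\varphi_0$ is a nonzero multiple of $(u_1,u_2)\varphi_0$ plus corrections, so $\xi\neq 0$ even for $p=1$; and for $p\ge 2$ the Gram determinant is not identically zero, giving the ``only if''. That computation (or an explicit citation of it from \cite{BF}) is the actual content of iii); your proposal does not yet contain it, and your fallback suggestion of a dimension count for $\left[S(V(\R)^2)\otimes{\bigwedge}^0(\frakp^*)\right]^{K_\infty}$ cannot work, since that space is infinite-dimensional and certainly nonzero for $p=1$.
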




\section{The Siegel-Weil formula}\label{subsec:siegel_weil}
This section summarizes the necessary background to state the Siegel-Weil formula in a vector valued setup. We mainly follow \cite{Ku2} and \cite{Ku3}. This formula connects the integral over a theta function associated to a Schwartz-Bruhat function  and a special value of an adelic Eisenstein series. It is stated in a global and more general setting compared to Section \ref{subsec:schwarz_forms}.

Recall that $S(V(\A)^n)$ is the space of Schwartz-Bruhat functions, $G(\A)=O(V(\A))$ and $G'(n)(\A) = \Sp(n,\A)$.
Within $G'(n)(\A)$ we have the following subgroups
\begin{align}\label{def:subgroups_ga}
 & M(\A)=\left\{m(a)=\zxz{a}{0}{0}{(a^{-1})^t}\; \big| \; a\in \GL_n(\A)\right\},\\
 & N(\A)=\left\{n(b)=\zxz{1}{b}{0}{1}\; \big|\; b\in \Sym_n(\A) \right\}.  
\end{align}
These define the Siegel parabolic subgroup $P(\A)=N(\A)M(\A)$ of $G'(n)(\A)$,  which is part of the Iwasawa decomposition
\begin{equation}\label{eq:iwasawa}
G'(n)(\A)=N(\A)M(\A)K'(n)
\end{equation}
of $G'(n)(\A)$, 
where $K'(n)=\prod_v K'_v(n)$ is the maximal compact subgroup of $G'(n)(\A)$. 
Here for a non-Archimedean place $v=p$ the group $K'_p(n)$ is given by $\Sp(n,\Z_p)$ and   $K'(n)_\infty$ is the maximal compact in $G'(n)(\R)$ (cf. \eqref{eq:max_comp_SP}).

For $\varphi\in S(V(\A)^n)$ we have a generalization of the theta function \eqref{eq:theta_schwarz_func} to the genus $n$ in the adelic setting:
\begin{equation}\label{eq:theta_function_adelic}
  \vartheta(g, h, \varphi) = \sum_{{\bf x}\in V(\Q)^n}\omega(g)\varphi(h^{-1}{\bf x}),
\end{equation}
where $g\in G'(n)(\A)$ and $h\in G(\A)$. Here $G'(n)(\A)$ acts on $S(V(\A)^n)$ via  the global Weil representation $\omega$ (see e. g. \cite{We1}, \cite{Ku3} or \cite{St1}).
This action commutes with the action
\[
\varphi({\bf x})\mapsto \varphi(h^{-1}{\bf x})
\]
of $G(\A)$. Sometimes the action of $G(\A)$ is written as $\omega(h)\varphi$.  It can be shown that $\vartheta(g,h,\varphi)$ is left invariant under $G'(n)(\Q)\times G(\Q)$, slowly increasing on $(G'(n)(\Q)\times G(\Q))\bs(G'(n)(\A)\times G(\A))$. Moreover, it defines a smooth function on $G'(n)(\A)\times G(\A)$.
The integral
\begin{equation}\label{eq:theta_integral_adelic}
  I(g,\varphi)= \int_{G(\Q)\bs G(\A)}\vartheta(g,h,\varphi)dh
\end{equation}
can be interpreted as the average value of $\vartheta$ with respect to $G(\A)$. Here $dh$ is the Haar measure on $G(\Q)\bs G(\A)$ normalized such that  $\vol(G(\Q)\bs G(\A)) = 1$, thus half of the Tamagawa measure. By Weil's convergence criterion (see \cite{We2}, Chap. VI, Prop. 8), the integral in \eqref{eq:theta_integral_adelic} is absolutely convergent for all $\varphi$ whenever either $V$ is anisotropic or the rank of $L$ and the Witt index of $V$ satisfy the inequality $m-r_0 > n+1$. 
Also, if $\varphi$ is $K'(n)$-finite, $g\mapsto I(g,\varphi)$ defines an automorphic form on $G'(n)(\A)$ (\cite{Ku3}).

To describe the Eisenstein series involved in the Siegel-Weil formula, we let
\begin{equation}
\chi_V: \A^\times / \Q^\times \longrightarrow\C; \quad x\mapsto \chi_V(x)=(x,(-1)^{m/2}\det(V))_{\A}
\end{equation}
the quadratic character defined by the global Hilbert symbol $(\cdot,\cdot)_\A$, where $\det(V)$ is the Gram determinant of $V$.

For $s\in \C$, we denote by $I_n(s,\chi_V)$ the normalized induced representation from $P(\A)$ to $G'(n)(\A)$. It consists of smooth function $g\mapsto \Phi(g, s)$ on $G'(n)(\A)$ satisfying
\begin{equation}\label{eq:induced_representation}
  \Phi(n(b)m(a)g,s) = \chi_V(\det(a))|\det(a)|^{s+\rho_n}\Phi(g,s)
\end{equation}
for all $a\in \GL_n(\A)$ and $b\in \Sym_n(\A)$, where 
\begin{equation}
\rho_n=\frac{n+1}{2}. 
\end{equation}
An element $\Phi\in I_n(s,\chi_V)$ is called a standard section if its restriction to $K'(n)$ is independent of $s$.
For any $\Phi\in I_n(s,\chi_V)$ and $g\in G'(n)(\A)$, we define the (adelic) Siegel Eisenstein series of genus $n$ by
\begin{equation}\label{eq:scalar_valued_eisenstein_series}
  E(g,s,\Phi) = \sum_{\gamma\in P(\Q)\bs G'(n)(\Q)}\Phi(\gamma g,s).
\end{equation}
If $\Phi$ is a standard section, one can proof that $E(g,s,\Phi)$  converges absolutely for $\re(s) > \rho_n$ and thereby defines and automorphic form on $G'(n)(\A)$ provided $\Phi$ is $K'(n)$-finite. Moreover, $E(g,s,\Phi)$ can be continued meromorphically to the whole $s$-plane (for these results see e. g. \cite{A})) and satisfies a functional equation.

One way to construct a standard section $\Phi$ is by means of the intertwining map
\begin{equation}
  \lambda: S(V(\A)^n)\longrightarrow I_n(s_0, \chi_V), \quad \varphi \mapsto \lambda(\varphi)(g,s_0) = \Phi(g,s_0) = (\omega(g)\varphi)(0),
\end{equation}
where
\begin{equation}\label{eq:s_0}
  s_0 = \frac{m}{2}-\rho_n
\end{equation}
and $\omega$ is the (adelic) Weil representation.
Using the Iwasawa decomposition, write $g\in G(\A)$ as $g= n(b)m(a)k$ and put
\[
|a(g)| = |\det(a)|_\A.
\]
It can then be proved that $\lambda(\varphi)\in I_n(s_0,\chi_V)$ has a unique extension to a standard section of $I_n(s,\chi_V)$ given by
\begin{equation}\label{eq:extension_standard_section}
  \Phi(g,s) = \lambda(\varphi)(g,s) = |a(g)|^{s-s_0}(\omega(g)\varphi)(0)\in I_n(s,\chi_V). 
\end{equation}

The Siegel-Weil formula was originally stated by Weil in a very general manner for dual reductive pairs (see \cite{We2}, Chap. IV, Theor\`eme 5). Here, we consider the dual pair $(\Sp(n), O(V))$ and follow \cite{BF}, \cite{KR1} and \cite{KR2}.
\begin{theorem}\label{thm:siegel_weil}
 Assume that $r_0=0$ or $m-r_0 > n+1$. Then for all $K'(n)$-finite $\varphi\in S(V(\A)^n)$
  \begin{enumerate}
  \item[i)]
    $E(g,s,\varphi)$ is holomorphic at $s = s_0$ and
  \item[ii)]
    $E(g,s_0,\lambda(\varphi)) = \alpha I(g,\varphi)$ for all $g\in G(\Q)\bs G(\A)$, where $\alpha = \begin{cases}1, & m > n+1,\\ 2, & m \le n+1.\end{cases}$
    \end{enumerate}
\end{theorem}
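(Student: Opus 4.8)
The statement is the vector-valued Siegel--Weil formula of Theorem~\ref{thm:siegel_weil}, and the plan is to deduce it from the classical (scalar-valued) Siegel--Weil formula of \cite{KR1} and \cite{KR2} by carefully tracking the decomposition of the relevant Schwartz--Bruhat function into the special family $\varphi_\mu \in S(V(\A_f))$, $\mu \in A$, of \eqref{eq:familiy}. First I would recall that the classical results of Kudla--Rallis, under precisely the hypothesis $r_0 = 0$ or $m - r_0 > n+1$ (which is exactly Weil's convergence criterion guaranteeing absolute convergence of the theta integral \eqref{eq:theta_integral_adelic}), assert both the holomorphy of $E(g,s,\lambda(\varphi))$ at $s = s_0$ and the identity $E(g,s_0,\lambda(\varphi)) = \alpha\, I(g,\varphi)$ for an \emph{arbitrary} $K'(n)$-finite $\varphi \in S(V(\A)^n)$, with the constant $\alpha$ as stated. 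The point of the theorem here is not a new analytic input but the observation that the vector-valued objects appearing elsewhere in the paper are assembled from scalar-valued ones indexed by $A$, to which Kudla--Rallis applies verbatim.

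The key steps are as follows. Step one: write a vector-valued Schwartz function on $V(\A)^n$ in the standard factored form $\varphi = \varphi_\infty \otimes \varphi_f$ with $\varphi_f = \sum_{\mu \in A} \varphi_\mu \otimes \frake_\mu$, where each $\varphi_\mu \in S(V(\A_f)^n)$ is the characteristic-function-type section of \eqref{eq:familiy}, and $\varphi_\infty$ is the Gaussian or one of the Schwartz functions $\phi_{q,l}$, $\xi$ from Section~\ref{subsec:schwarz_forms}; note that each such $\varphi_\mu$ is $K'(n)$-finite, since it is a finite linear combination of characteristic functions of cosets, and $\varphi_\infty$ is $K'(n)_\infty$-finite by Theorem~\ref{thm:properties_varphi_q_l}(ii) and Lemma~\ref{lem:properties_xi}(ii). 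Step two: observe that both sides of the asserted identity are linear in $\varphi_f$ (the Eisenstein series $E(g,s,\lambda(\varphi))$ is linear in $\varphi$ through the intertwining map $\lambda$ and the summation over $P(\Q)\bs G'(n)(\Q)$, while the theta integral $I(g,\varphi) = \int_{G(\Q)\bs G(\A)}\vartheta(g,h,\varphi)\,dh$ is linear in $\varphi$ through the theta series \eqref{eq:theta_function_adelic}). Hence it suffices to prove the identity for each summand $\varphi_\infty \otimes \varphi_\mu$, which is an honest scalar-valued Schwartz function on $V(\A)^n$. Step three: apply the classical Siegel--Weil formula of \cite{KR1}, \cite{KR2} to each $\varphi_\infty \otimes \varphi_\mu$, obtaining holomorphy at $s_0$ and $E(g,s_0,\lambda(\varphi_\infty \otimes \varphi_\mu)) = \alpha\, I(g,\varphi_\infty \otimes \varphi_\mu)$; here the constant $\alpha$ depends only on whether $m > n+1$, which is a property of the lattice and not of $\mu$, so it is uniform across the sum. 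Step four: sum over $\mu \in A$ and reassemble to conclude both parts i) and ii).

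The main technical point — and the step I expect to require the most care — is verifying in step two that the reassembly is compatible with the adelic normalizations, in particular that the intertwining map $\lambda$ and the choice $s_0 = m/2 - \rho_n$ in \eqref{eq:s_0} are the same for every summand (they are, since $s_0$ and the character $\chi_V$ depend only on $V$, not on the lattice structure or on $\mu$), and that the Haar measure on $G(\Q)\bs G(\A)$ used in $I(g,\varphi)$ — normalized to give $\vol(G(\Q)\bs G(\A)) = 1$, i.e.\ half the Tamagawa measure — matches the normalization under which Kudla--Rallis state their formula with the constant $\alpha$. Once these bookkeeping matters are pinned down, there is no further analytic obstacle: absolute convergence of $I(g,\varphi)$ is supplied by Weil's criterion under the stated hypothesis, meromorphic continuation and the functional equation of $E(g,s,\Phi)$ are recalled from \cite{A}, and the classical Siegel--Weil formula supplies the rest. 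One should also remark that the anisotropy hypothesis $A$ anisotropic, used elsewhere, plays no role in this particular deduction — only $r_0 = 0$ or $m - r_0 > n+1$ is needed, exactly as stated.
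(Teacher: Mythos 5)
You have misidentified which statement you are proving, and this makes the logical structure of your argument shaky even though its ingredients are sound. Theorem~\ref{thm:siegel_weil} as stated \emph{is} the classical scalar-valued Siegel--Weil formula for the dual pair $(\Sp(n),O(V))$, asserted for an \emph{arbitrary} $K'(n)$-finite $\varphi\in S(V(\A)^n)$; the paper offers no proof of it and simply quotes it from \cite{We2}, \cite{KR1}, \cite{KR2} (the hypothesis $r_0=0$ or $m-r_0>n+1$ is Weil's convergence criterion, so this is the convergent-range formula with the Kudla--Rallis normalization of $\alpha$). The first sentence of your proposal --- that Kudla--Rallis already gives both holomorphy at $s_0$ and the identity for arbitrary $K'(n)$-finite $\varphi$ with the stated $\alpha$ --- is therefore the \emph{entire} proof, and it coincides with the paper's treatment. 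Everything after that is aimed at a different statement, namely the vector-valued Corollary~\ref{prop:siegel_weil_vec_val}, for which the decomposition into the family $\varphi_\mu$ of \eqref{eq:familiy} and termwise application of the theorem is indeed the right (and essentially only) argument.

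The structural problem is your step two: you claim that by linearity ``it suffices to prove the identity for each summand $\varphi_\infty\otimes\varphi_\mu$.'' That reduction only establishes the result for $\varphi$ whose finite part lies in the finite-dimensional subspace $S_L=\bigoplus_{\mu\in A^n}\C\varphi_\mu$ of \eqref{eq:sub_schwartz_bruhat}, whereas the theorem is asserted for all $K'(n)$-finite $\varphi\in S(V(\A)^n)$. A general Schwartz--Bruhat function on $V(\A_f)^n$ is not a linear combination of the characteristic functions $\mathbbm{1}_{\mu+\hat L^n}$, so if the decomposition were load-bearing your proof would only cover a proper subspace. As it happens the decomposition is redundant rather than wrong --- the general case is exactly what you (and the paper) import from \cite{KR1}, \cite{KR2} --- but you should be clear that the theorem is a citation, and reserve the $\varphi_\mu$-reassembly argument (including your correct observations that $s_0$, $\chi_V$ and $\alpha$ do not depend on $\mu$, that the measure is normalized to $\vol(G(\Q)\bs G(\A))=1$, and that anisotropy of $A$ plays no role) for the proof of Corollary~\ref{prop:siegel_weil_vec_val}, where it belongs.
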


We would like to give a vector valued version of Theorem \ref{thm:siegel_weil} in the same vein as in Prop. 2.2 of \cite{BY}, but for lattices of signature $(p,q)$ and the group $\Sp(n)$ (see also \cite{St1}, Section 3, for the following notations and definitions). 
To this end, we define a Schwartz-Bruhat function $\varphi_\mu\in S(V(\A_f)^n)$ associated to $\mu\in (L'/L)^n = A^n$ by
\begin{align}\label{eq:familiy}
  \varphi_\mu = \mathbbm{1}_{\mu + \hat{L}^n} = \prod_{p<\infty} \varphi^{(\mu)}_p = \prod_{p<\infty}\mathbbm{1}_{\mu+L_p^n}. 
\end{align}
Here $L_p=L\otimes \Z_p$, which is the $p$-part of $\hat{L}=L\otimes \hat{\Z}$ with $\hat{\Z}=\prod_{p<\infty}\Z_p$. 
For the Archimedean place we choose the Gaussian $\varphi_0^{p,q}$ evaluated at the base point $z_0\in D$ (see \eqref{eq:gaussian_base_point}).
We associate to $\varphi_0^{p,q}\prod_{p<\infty}\varphi^{(\mu)}_p$ the standard section $\Phi^{(p-q)/2}_\infty(s)$ $\prod_{p<\infty}\Phi_p^{(\mu)}(s) \in I_n(s,\chi_V)$, where  
\begin{equation}\label{eq:norm_section}
  \begin{split}
    & \Phi^{(p-q)/2}_\infty(g_\infty,s)= \lambda_\infty(\varphi^{p,q}_0)(g_\infty,s) \text{ and }\\
    & \Phi_p^{(\mu)}(g_p,s) = \lambda_p(\varphi_p^{(\mu)})(g_p,s)
  \end{split}
\end{equation}
for $g\in G(\A)$. We use the notation
\[
\Phi_\mu(s) = \prod_{p<\infty}\Phi_p^{(\mu)}(s). 
\]

Note that the Schwartz-Bruhat functions $\varphi_\mu,\; \mu\in A^n,$ generate a finite dimensional subspace
\begin{equation}\label{eq:sub_schwartz_bruhat}
  S_L=\bigoplus_{\mu\in A^n}\C\varphi_\mu
\end{equation}
  of $S(V(\A_f)^n)$, which is stable under the action of $G'(n)(\widehat{\Z})$ via the Weil representation (see \cite{St1}, Lemma 3.4 and the discussion after the Lemma).

We are now ready to define a vector valued theta function and a vector valued Eisenstein series. Let $\phi\in S(V(\R)^n)$ with associated standard section $\Phi(g_\infty,s) = \lambda(\phi)(g_\infty,s)$ at the place infinity and $\varphi_\mu$ and $\Phi_\mu$ as above. Then  we put 
\begin{equation}\label{eq:theta_vec_val_1}
  \vartheta_L(g,h,\phi) = \sum_{\mu\in A^n}\vartheta(g,h,\phi\otimes\varphi_\mu)\varphi_\mu 
\end{equation}
and
\begin{equation}\label{eq:eisenstein_adelic_vec_val}
  E_L(g,s,\Phi) = \sum_{\mu\in A^n}E(g,s,\Phi\otimes\Phi_\mu)\varphi_\mu.
  \end{equation}
In terms of $\theta_L$ and $E_L$, based on the same assumptions and notations as in Theorem \ref{thm:siegel_weil}, the Siegel-Weil formula then reads  as follows.

\begin{corollary}\label{prop:siegel_weil_vec_val}
 For $\phi\in S(V(\A)^n)$ with induced section $\Phi = \lambda(\phi)$ we have
  \begin{equation}\label{eq:siegel_weil_vec_val}
    \alpha\int_{O(V)(\Q)\bs O(V)(\A)}\vartheta_L(g,h,\phi)dh = E_L(g,s_0,\Phi).
  \end{equation}
\end{corollary}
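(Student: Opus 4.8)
The plan is to deduce the vector valued Siegel--Weil formula of Corollary~\ref{prop:siegel_weil_vec_val} directly from the classical (adelic, scalar valued) Siegel--Weil formula recorded in Theorem~\ref{thm:siegel_weil}, by taking an appropriate finite linear combination over the discriminant group. The key observation is that both sides of \eqref{eq:siegel_weil_vec_val} are, by their very definitions \eqref{eq:theta_vec_val_1} and \eqref{eq:eisenstein_adelic_vec_val}, $\C[A^n]$-valued functions whose $\varphi_\mu$-component is built from the scalar objects $\vartheta(g,h,\phi\otimes\varphi_\mu)$ and $E(g,s,\Phi\otimes\Phi_\mu)$. So it suffices to check that the identity holds componentwise. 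First I would fix $\mu\in A^n$ and apply Theorem~\ref{thm:siegel_weil} to the global Schwartz--Bruhat function $\phi\otimes\varphi_\mu\in S(V(\A)^n)$, whose finite part is the characteristic function $\mathbbm{1}_{\mu+\hat L^n}$ from \eqref{eq:familiy} and whose Archimedean part is $\phi$. One must first record that $\phi\otimes\varphi_\mu$ is $K'(n)$-finite: at the finite places this is exactly the statement (cited from \cite{St1}, Lemma~3.4) that $S_L$ in \eqref{eq:sub_schwartz_bruhat} is stable under $G'(n)(\widehat\Z)$, and at the infinite place $K'(n)_\infty$-finiteness of $\phi$ is part of the hypothesis (for instance it holds automatically for the Gaussian sections of interest by Theorem~\ref{thm:properties_varphi_q_l}(ii) and Lemma~\ref{lem:properties_xi}(ii)). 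Under the standing assumption $r_0=0$ or $m-r_0>n+1$, Theorem~\ref{thm:siegel_weil}(i) gives holomorphy of $E(g,s,\Phi\otimes\Phi_\mu)$ at $s=s_0$, and part~(ii) gives
\[
E(g,s_0,\lambda(\phi\otimes\varphi_\mu)) = \alpha\, I(g,\phi\otimes\varphi_\mu) = \alpha\int_{O(V)(\Q)\bs O(V)(\A)}\vartheta(g,h,\phi\otimes\varphi_\mu)\,dh,
\]
with the same constant $\alpha\in\{1,2\}$ for every $\mu$ (it depends only on $m$ and $n$).

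Next I would reconcile the induced sections: by the multiplicativity of the intertwining map $\lambda$ over the places of $\Q$, one has $\lambda(\phi\otimes\varphi_\mu) = \lambda_\infty(\phi)\otimes\prod_{p<\infty}\lambda_p(\varphi_p^{(\mu)}) = \Phi\otimes\Phi_\mu$ in the notation of \eqref{eq:norm_section}, since $\lambda(\psi)(g,s_0)=(\omega(g)\psi)(0)$ factors as a product over places when $\psi$ is a pure tensor and $\Phi=\lambda(\phi)$ by hypothesis. Hence the componentwise identity above is precisely $E(g,s_0,\Phi\otimes\Phi_\mu) = \alpha\, I(g,\phi\otimes\varphi_\mu)$. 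Multiplying by $\varphi_\mu$ and summing over $\mu\in A^n$ then yields
\[
E_L(g,s_0,\Phi) = \sum_{\mu\in A^n} E(g,s_0,\Phi\otimes\Phi_\mu)\varphi_\mu = \alpha\sum_{\mu\in A^n} I(g,\phi\otimes\varphi_\mu)\varphi_\mu = \alpha\int_{O(V)(\Q)\bs O(V)(\A)}\vartheta_L(g,h,\phi)\,dh,
\]
where in the last step one interchanges the finite sum over $\mu$ with the integral over $O(V)(\Q)\bs O(V)(\A)$ — legitimate because the sum is finite and each integrand $\vartheta(g,h,\phi\otimes\varphi_\mu)$ is absolutely integrable by Weil's convergence criterion under our hypotheses (as already noted after \eqref{eq:theta_integral_adelic}). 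This is exactly \eqref{eq:siegel_weil_vec_val}.

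I do not expect a serious obstacle; the content is genuinely a bookkeeping reduction. The one point that deserves care — and which I would treat as the ``main'' step — is verifying that all the normalizations match: that the measure $dh$ with $\vol(O(V)(\Q)\bs O(V)(\A))=1$ is the one making Theorem~\ref{thm:siegel_weil}(ii) hold with the stated $\alpha$, and that the standard section attached to $\varphi_\mu$ in \eqref{eq:norm_section}--\eqref{eq:extension_standard_section} is literally $\lambda_p(\varphi_p^{(\mu)})$ rather than some rescaling, so that $\lambda(\phi\otimes\varphi_\mu)=\Phi\otimes\Phi_\mu$ on the nose. Once these identifications are in place, the proof is the three-line sum-over-$\mu$ argument above. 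A secondary point worth a sentence is that the hypothesis ``$K'(n)$-finite'' in Theorem~\ref{thm:siegel_weil} is met: finiteness at the non-Archimedean places is exactly the $G'(n)(\widehat\Z)$-stability of $S_L$ from \cite{St1}, and at infinity it is inherited from $\phi$, so $\phi\otimes\varphi_\mu$ is $K'(n)$-finite for each $\mu$ and the theorem applies verbatim.
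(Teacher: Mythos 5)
Your proposal is correct and is exactly the argument the paper intends: the paper states this as a corollary without a written proof, precisely because it follows by applying the scalar-valued Siegel--Weil formula of Theorem \ref{thm:siegel_weil} componentwise to each $\phi\otimes\varphi_\mu$ (noting $K'(n)$-finiteness via the $G'(n)(\widehat\Z)$-stability of $S_L$ and the factorization $\lambda(\phi\otimes\varphi_\mu)=\Phi\otimes\Phi_\mu$) and then summing the finitely many components over $\mu\in A^n$ against the definitions \eqref{eq:theta_vec_val_1} and \eqref{eq:eisenstein_adelic_vec_val}. Your attention to the normalization of $dh$ and of the induced sections is a reasonable extra check but introduces nothing beyond the paper's implicit reasoning.
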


We close this section with a result taken from \cite{BF}, which will be important  later in the paper. 
To that end, let $\kappa \in \N$ and  denote with
\begin{equation}\label{eq:standard_section}
  \Phi_\infty^\kappa(s) = \det({\bf k})^{\kappa}
\end{equation}
the standard section of weight $\kappa$ at the place infinity.
It turns out that  the induced section $\Xi$ associated to the Schwartz function $\xi$ (see \eqref{eq:xi_q_l}) is essentially $\Phi_\infty^{m/2+l}$:

\begin{proposition}[\cite{BF},Prop. 3.12]\label{prop:standard_section_xi}
  Let $q, l$ be as in Section \ref{subsec:schwarz_forms} and $q+l$ even. Then
  $\Xi(g,s) = \lambda(\xi)(g,s)$ is the standard section \eqref{eq:standard_section} of weight $m/2 + l$ (at the infinite place), i. e.
  \[
  \Xi(s) = C(s)\Phi_\infty^{m/2+l}(s).
  \]
  Here $C(s)$ is a polynomial, which is non-zero for $s_0=\frac{m}{2}-\frac{3}{2}$ and which vanishes identically for $p=1$.   
  \end{proposition}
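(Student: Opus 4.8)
The plan is to identify $\Xi=\lambda(\xi)$ by its restriction to the maximal compact subgroup $K'_\infty(2)$ and then to pin down the constant of proportionality. First I would record that, by \eqref{eq:xi_q_l}, $\xi=\frac{p^l(-1)^{q+l}}{2^l\pi^{q+l}}\,\omega(\alpha_{q+l})\varphi_0$ with $\varphi_0(0)=\varphi_\infty^{p,q}(0,z_0)=1$, and that, since $q+l$ is even, Lemma \ref{lem:properties_xi}(ii) makes $\xi$ a weight $\frac m2+l$ eigenvector for $K'_\infty(2)$ acting through $\omega$, i.e. $\omega(k)\xi=\det({\bf k})^{m/2+l}\xi$. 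Evaluating \eqref{eq:extension_standard_section} at $g=k\in K'_\infty(2)$, where $|a(k)|=1$, then gives
\[
\Xi(k,s)=(\omega(k)\xi)(0)=\det({\bf k})^{m/2+l}\,\xi(0)=\xi(0)\,\Phi_\infty^{m/2+l}(k,s).
\]
As $\Xi(\cdot,s)$ and $\Phi_\infty^{m/2+l}(\cdot,s)$ both lie in $I_2(s,\chi_V)$, they are determined by their restrictions to $K'_\infty(2)$ via the Iwasawa decomposition \eqref{eq:iwasawa}, whence $\Xi(s)=\xi(0)\,\Phi_\infty^{m/2+l}(s)$ and $C(s)=\xi(0)$. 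If, as in the statement, one prefers to exhibit $C$ as a genuine polynomial in $s$, one runs the same argument inside the flat family $\{I_2(s,\chi_V)\}$: using $\lambda(\varphi_0)=\Phi_\infty^{(p-q)/2}(s)$ and the fact that $\lambda$ carries the $\omega$-action of $\frakp_+$ to the weight-raising action on the family --- under which each raising step multiplies a scalar standard section by a factor affine-linear in $s$, as one checks by writing $g=n(b)m(a)k$ and noting that on $K'_\infty(2)$ only the $K'$-direction of each $R_{ij}$ survives while the $|\det a|^{s+\rho_2}$-factor produces the linear term --- one obtains a polynomial $C(s)$ whose value at $s=s_0$ is $\xi(0)$.

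This reduces the statement to showing $\xi(0)\neq 0$ precisely when $p>1$. For this I would evaluate $(\omega(\alpha_{q+l})\varphi_0)(0)$ in the Schr\"odinger model. The generators $R_{11},R_{22},R_{12}$ of $\frakp_+$ act on $S(V(\R)^2)$ as explicit second-order differential operators, built in the positive variables from the operators $\calD_\alpha=x_\alpha-\frac1{2\pi}\frac{\partial}{\partial x_\alpha}$, $1\le\alpha\le p$, of \eqref{eq:kudla_millson_diff_op}--\eqref{eq:funke_millson_diff_op}. Applying such operators to $\varphi_0(x)=e^{-\pi(x,x)_{z_0}}$ produces $(\text{polynomial})\cdot\varphi_0$, and evaluation at $x=0$ retains only the constant term; since $\calD_\alpha\calD_{\alpha'}\varphi_0(0)=-\frac1\pi\delta_{\alpha\alpha'}$, only completely paired terms contribute. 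Because $q+l$ is even, what survives is a nonzero multiple of a polynomial $P(p)$ in $p$ (with coefficients depending on $q,l$), and the ``determinant''-type shape of $\alpha_{q+l}$ in \eqref{eq:xi_q_l} forces $P(1)=0$: for $p=1$ the relevant combination of the $R_{ij}$ collapses on the single positive variable (this is also why $\xi\equiv 0$ by Lemma \ref{lem:properties_xi}(iii), noting that $q\ge 1$ and $q+l$ even force $q+l\ge 2>1$), so $C\equiv 0$ there. For $p>1$ the surviving count gives $P(p)\neq 0$, hence $C(s_0)=\xi(0)\neq 0$.

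The conceptual part --- the $K'_\infty(2)$-equivariance of $\lambda$ and the fact that a section is fixed by its $K'$-restriction --- is routine, so the first paragraph is essentially immediate from Lemma \ref{lem:properties_xi}(ii). The main obstacle is the Schr\"odinger-model evaluation of $\xi(0)$: carrying out the action of $\alpha_{q+l}$ on the Gaussian, controlling the combinatorics of the pairings so as to extract the polynomial $P(p)$ and its vanishing at $p=1$, and keeping the several normalizations consistent (the $p^l/(2^l\pi^{q+l})$ prefactor, the Weil-representation conventions for the $R_{ij}$, and the identification of $\frakp_+$ with symmetric matrices). If the polynomial-in-$s$ form of $C(s)$ is desired, the extra point to be careful about is the coincidence at $s=s_0$ of the weight-raising action on the flat family with $\lambda\circ\omega(\alpha_{q+l})$.
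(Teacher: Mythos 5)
The paper does not actually prove this proposition: it is imported verbatim from \cite{BF}, Prop.~3.12, so there is no internal proof to compare against. Measured against what that proof has to accomplish, your first paragraph is correct and is essentially the standard reduction: since $|a(k)|=1$ on $K'_\infty(2)$ and $\xi$ is a weight $\frac m2+l$ eigenvector (Lemma \ref{lem:properties_xi}, ii)), the standard extension \eqref{eq:extension_standard_section} of $\lambda(\xi)$ restricted to $K'_\infty(2)$ equals $\det({\bf k})^{m/2+l}\xi(0)$, and a section of $I_2(s,\chi_V)$ is determined by its restriction to $K'$ via the Iwasawa decomposition. This correctly reduces the proposition to the single scalar $\xi(0)=C(s_0)$. (The tension you note between ``constant'' and ``polynomial in $s$'' is harmless: the polynomial $C(s)$ records the linear factors produced by transporting $\omega(\alpha_{q+l})$ to the family of induced representations, but only its value at $s_0$ enters the present paper, e.g.\ in Theorem \ref{thm:l2_norm_doubling_int}.)

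The gap is in your second paragraph. The entire content of the proposition is the claim that $\xi(0)\neq 0$ for $p>1$, and you do not establish it: you assert that ``the surviving count gives $P(p)\neq 0$'' without carrying out the pairing combinatorics for $\omega\bigl((R_{12}^2-R_{11}^2R_{22}^2)^{(q+l)/2}\bigr)\varphi_0$ evaluated at $0$. This is not a safe omission. The operator is an alternating combination, the paired contributions carry signs, and a priori they could cancel for particular $p$; the proven vanishing at $p=1$ shows that such cancellation genuinely occurs for some parameter values, so non-cancellation for all integers $p>1$ requires proof. The way this is settled in \cite{BF} is by an explicit computation: either one works out $\omega(R_{ij})$ on the two-variable Gaussian in the Schr\"odinger model and evaluates the resulting polynomial in $p$ in closed form, or (more efficiently) one uses the Kudla--Rallis type formula for the action of the raising operators on the scalar weight-$\kappa$ vector of $I_2(s,\chi_V)$, which exhibits $C(s)$ as an explicit product of linear factors; one then checks that none of these factors vanishes at $s_0=\frac m2-\frac32$ when $p>1$, while one of them vanishes identically when $p=1$. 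Without one of these explicit computations your argument does not prove the non-vanishing, which is precisely the point of the proposition.
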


\section{A standard $L$-fundtion and zeta function}\label{sec:standard_l_func}
In \cite{St2} and \cite{St3} a zeta function and a standard $L$-function, respectively,  is assigned to a vector valued common Hecke eigenform $f$ of weight $\kappa$ and type $\rho_A$. The usual basic properties for both functions are shown. In this section we briefly review some of the before mentioned material and show that the standard $L$-function is non-zero at some $s\in \C$. This result will turn out to be an important step in the proof of the injectivity of the Kudla-Millson theta lift.  All details can be found in \cite{St2} and \cite{St3}.

Because there is a lot of notation used in this section, we list most of it at the beginning of this section and explain the rest of it within the text or refer to the main sources.
There are several groups involved. The group $\calQ_p$ is a subgroup of $\GL_2(\Q_p)$ and $\calK_p$  means a subgroup of $\GL_2(\Z_p)$. We write $\calM_p$ and $\calD_p$ for a subgroup of diagonal matrices in $\calQ_p$ and $\calK_p$, respectively,  and denote with $N(\Z_p)$ the group $\{\kzxz{1}{x}{0}{1}\; |\; x\in \Z_p\}$. 
Recall that $\omega = \bigotimes_{p\le \infty}\omega_p$ is the global Weil representation of $G'(1)(\A)\times G(\A)$. The Schr\"odinger model of $\omega$ acts on $S(V(\A))$ and leaves the subspace $S_L=\bigotimes_p S_{L_p}$ (see \eqref{eq:sub_schwartz_bruhat}) invariant. $\varphi_p^{(0)}\in S_{L_p}$ is defined by \eqref{eq:familiy} (with $\mu=0$).
As usual, we abbreviate the diagonal matrix $\kzxz{d_1}{0}{0}{d_2}$ with the symbol $m(d_1,d_2)$.

The zeta function $Z(s,f)$ is defined via the eigenvalues $\lambda_f(m(d^2,1))$ of the Hecke operators $T(m(d^2,1))$:
\begin{equation}
  Z(s,f) = \sum_{d\in \N}\lambda_f(m(d^2,1))d^{-2s}. 
\end{equation}
Its analytic properties are linked to those of  a vector valued Siegel Eisenstein series $E_{\kappa,0}^2$ of genus two transforming according to the Weil representation of $G'(2)(\Z)$ by a Rankin-Selberg type integral formula (see \cite{St2}, Thm. 6.4)
\begin{equation}
  \begin{split}
  & \sum_{\lambda\in L'/L}\left(\int_{\Gamma\bs \H}\langle f(\tau)\otimes \frake_\lambda,E_{\kappa,0}^{2}(\kzxz{\tau}{0}{0}{-\overline{\zeta}},\overline{s})\rangle_{2} \im(\tau)^{\kappa}d\mu(\tau)\right)\frake_\lambda \\
    & = K(\kappa,s)Z(2s+\kappa,f)f(\zeta),
    \end{split}
\end{equation}
which holds  in the region of convergence of $E_{\kappa,0}^2$, i. e. for all $s$ with $\re(s) > \frac{3-\kappa}{2}$. Thus, $Z(s,f)$ converges in the  region of all $s$ with $\re(s)> \frac{3+\kappa}{2}$.
A more general zeta function is specified in \cite{St3}. It is defined as an Euler product
\[
\calZ(s,f) = \prod_{p \text{ prime}}\calZ_p(s,f),
\]
where for each prime $p$ the local zeta function  $\calZ_p(s,f)$ is given by
\[
\calZ_p(s,f) = \sum_{(k,l)\in \Lambda_+}\lambda_f(m(p^k,p^l))p^{-s(k+l)}.
\]
Here $\Lambda_+$ means the set
\[
\Lambda_+ = \left\{(k,l)\in \Z^2\; |\; 0\le k\le l\text{ and } k+l\in 2\Z \right\}. 
\]
The zeta function $\calZ(s,f)$ can be expressed in terms of $Z(s,f)$ by the following relation
\begin{equation}\label{eq:global_general_zeta_func}
  \begin{split}
    &\calZ(s,f) \\
    &=\prod_{p\mid |A|}\left(\left(\frac{e(\sig(A_p)/8)}{|A_p|^{1/2}} -1\right) + L_p(2s+\kappa-2,\chi_{A_p^\perp})\right)L(2s+\kappa-2,\chi_A)Z(s+\kappa-2,f),
    \end{split}
  \end{equation}
  where
  \begin{enumerate}
  \item[i)]
    $\chi_A$ and $\chi_{A_p^\perp}$ are Dirichlet characters, which are defined in \cite{St3}, Section 3,
  \item[ii)]
      \[
      L_p(s,\chi_{A_p^\perp})  =(1-\chi_{A_p^\perp}(p)p^{-s})^{-1},
      \]
    \item[iii)]
      $L(s,\chi_A)$  is the Dirichlet $L$-series associated to $\chi_A$.
  \end{enumerate}
  From \eqref{eq:global_general_zeta_func} we can immediately conclude that $\calZ(s,f)$ converges for $s\in \C$ with $\re(s)> \frac{7-\kappa}{2}$.

  In  \cite{St3}  an isomorphism between $S_{\kappa,A}$ and a space of vector valued automorphic forms $A_\kappa(\omega_f)$ of type $\omega$ is established. Vector valued spherical Hecke algebras depending on a representation are well studied objects. In \cite{St3} for each prime $p$ the structure of a subalgebra of the  vector valued spherical Hecke algebra $\calH(\calQ_p//\calK_p,\omega_p)$ depending on the Weil representation $\omega_p$ is determined. 
  Two cases have to be considered. In both cases we let
  \[
  \left\{T_{k,l}\; |\; (k,l)\in \Lambda_+\right\}
  \]
  be a set of generators.
  If $p$ divides $|A|$, this algebra is denoted with  $\calH^+(\calQ_p//\calK_p,\omega_p)$ and 
    \begin{align*}
    T_{k,l}(k_1m(p^r,p^s)k_2) = \omega_p(k_1)\circ T(k,l)\circ \omega_p(k_2)      
  \end{align*}
  with
  \[
  T(k,l)(m(p^r,p^s))\varphi_p^{(\mu_p)} = \frac{g(A_p)}{g_{p^l}(A_p)}\mathbbm{1}_{\calK_pm(p^k,p^l)\calK_p}\varphi_p^{(p^{(l-k)/2}\mu_p)} = \frac{g(A_p)}{g_{p^l}(A_p)}\mathbbm{1}_{\calK_pm(p^k,p^l)\calK_p}\varphi_p^{(0)}
  \]
  for $k< l$  and
  \[
  T(k,k)(m(p^r,p^s)) = \frac{g(A_p)}{g_{p^k}(A_p)}\mathbbm{1}_{\calK_pm(p^k,p^k)\calK_p}\id_{S_{L_p}}
  \]
  for $k=l$.
  If $p\nmid |A|$, the  whole spherical Hecke algebra $\calH(\calQ_p//\calK_p,\omega_p)$ is considered. As $\omega_p$ acts trivially in this case, its structure is much easier and well known. In this case we have
  \[
  T_{k,l} = \mathbbm{1}_{\calK_p m(p^k,p^l)\calK_p}\id_{S_{L_p}}. 
  \]
Here $g_{p^k}(A_p)$ (and $g(A_p)$) is given by \eqref{eq:Gauss_sum}.  
  Each local Hecke algebra acts on $A_\kappa(\omega_f)$ by a Hecke operator $\calT^{T_{k,l}}$. It is important to realize that this action is compatible with the action of Hecke operators on $S_{\kappa,A}$. More specifically, the identity
   \begin{equation}\label{eq;compatibility_notcoprime}
    \calT^{T_{k,l}}(F_f) = F_{p^{(k+l)(\frac{\kappa}{2}-1)}T(m(p^{-k},p^{-l}))f}
   \end{equation}
   holds, where $F_f\in A_\kappa(\omega_f)$ corresponds to $f\in S_{\kappa,A}$ and $T(m(p^{-k},p^{-l}))$ means the Hecke operator attached to $m(p^{-k},p^{-l})$ (cf. \cite{St3}, Thm. 6.9). 
   If $F$ is an automorphic eigenform  of all Hecke operators $\calT^{T_{k,l}}$, the corresponding eigenvalues give rise to an algebra homomorphism
\[
  \lambda_{F,p}:  \calH^+(\calQ_p//\calK_p,\omega_p)\rightarrow \C,\quad T\mapsto \lambda_{F,p}(T). 
  \]
  This algebra homomorphism determines in turn an unramified character $\chi_{F,p}$ on the group  $\calM_p$.
   The standard $L$-function $L(s,F)$ of a common eigenform $F$ is constructed as an Euler product
   \begin{equation}
    L(s,F) = \prod_{p<\infty}L_p(s,F)
  \end{equation}
  with
  \begin{equation}\label{eq:local_L_function}
    L_p(s,F)=
    \begin{cases}
      \frac{1+\chi_{F,p}^{(1)}(p)\chi_{F,p}^{(2)}(p)p^{-2s+1}}{(1-\chi_{F,p}^{(1)}(p^2)p^{-2s+1})(1-\chi_{F,p}^{(2)}(p^2)p^{-2s+1})}, & (p,|A|)=1,\\
      &\\
      \frac{1}{C(A_p)}\frac{1+\chi_{F,p}^{(1)}(p)\chi_{F,p}^{(2)}(p)p^{-2s+1}}{(1-\chi_{F,p}^{(1)}(p^2)p^{-2s+1})(1-\chi_{F,p}^{(2)}(p^2)p^{-2s+1})}, & p\mid |A|.
      \end{cases}
  \end{equation}
  Here $\chi_{F,p} = (\chi_{F,p}^{(1)}, \chi_{F,p}^{(2)})$ is the unramified character mentioned before and $C(A_p)$ is some constant depending on the $p$-group $A_p$ (see \cite{St3}, Lemma 7.3). 
  If $F_f$ is the automorphic form belonging to $f\in S_{\kappa,A}$ and $L(s,F_f)$ the standard $L$-function of $F_f$, we define the standard $L$-function  of $f$ naturally by
  \[
  L(s,f) = L(s,F_f). 
  \]
  There is a relation between the standard zeta function $\calZ(s,f)$ and the standard $L$-funktion $L(s,f)$. We have
  \begin{equation}\label{eq:rel_zeta_standard_L_func}
    \calZ(s+\frac{\kappa}{2}-1, f) = L(s,f). 
    \end{equation}
As $\calZ(s,f)$ converges for all $s\in \C$ with $\re(s)> \frac{7-\kappa}{2}$, it follows from \eqref{eq:rel_zeta_standard_L_func} that $L(s,f)$ converges for all $s\in \C$ with $\re(s) > \frac{9}{2}-\kappa$. 

\begin{theorem}\label{thm:L_func_non_zero}
Let $\kappa= \frac{m}{2}+l$ with $m$ and $l$ as in Sections \ref{sec:preliminaries} and \ref{subsec:schwarz_forms} satisfying $\frac{m}{2} > l + 3$.   Then the standard $L$-function $L(s,f)$ is non-zero for $s= -\frac{m}{4}-\frac{3}{2}l+3$.     \end{theorem}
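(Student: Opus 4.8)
Here $f$ is a common Hecke eigenform, so that $L(s,f)=\prod_pL_p(s,f)$ is defined. The plan is to transfer the statement, via \eqref{eq:rel_zeta_standard_L_func}, to a non-vanishing statement for $\calZ(s,f)$ at a single real point which the hypothesis $\frac m2>l+3$ places strictly inside the region of absolute convergence of its Euler product. Writing $s_\ast:=-\frac m4-\frac32 l+3$ and using $\kappa=\frac m2+l$, a one-line computation gives
\[
s_\ast+\tfrac{\kappa}{2}-1=\Bigl(-\tfrac m4-\tfrac32 l+3\Bigr)+\Bigl(\tfrac m4+\tfrac l2\Bigr)-1=2-l,
\]
so by \eqref{eq:rel_zeta_standard_L_func} it suffices to show $\calZ(2-l,f)\neq 0$. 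The Euler product $\calZ(s,f)=\prod_p\calZ_p(s,f)$ converges absolutely for $\re(s)>\frac{7-\kappa}{2}$, and the hypothesis $\frac m2>l+3$ is precisely the inequality $2-l>\frac{7-\kappa}{2}$; equivalently $s_\ast>\frac92-\kappa$, so $s_\ast$ lies in the region of absolute convergence of $L(s,f)=\prod_pL_p(s,f)$. Since an absolutely convergent Euler product is non-zero provided every one of its Euler factors is non-zero, the problem reduces to showing $L_p(s_\ast,f)\neq 0$ for all primes $p$.

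For this step I would argue directly from the explicit shape \eqref{eq:local_L_function} of $L_p(s,F_f)$. The leading constant $\frac1{C(A_p)}$ — which is $1$ when $p\nmid|A|$ — is non-zero. At $s_\ast$ absolute convergence of the global product forces each local factor to converge as a power series in $p^{-s}$; in particular the denominator factors $1-\chi^{(i)}_{F,p}(p^2)p^{-2s_\ast+1}$ ($i=1,2$) are finite and non-vanishing, and one obtains the bounds $|\chi^{(i)}_{F,p}(p^2)|\,p^{-2s_\ast+1}<1$. Hence $L_p(s_\ast,f)$ can vanish only through its numerator $1+\chi^{(1)}_{F,p}(p)\chi^{(2)}_{F,p}(p)p^{-2s_\ast+1}$. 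But $\chi^{(i)}_{F,p}$ is unramified, so $\chi^{(i)}_{F,p}(p^2)=\chi^{(i)}_{F,p}(p)^2$, whence
\[
\bigl|\chi^{(1)}_{F,p}(p)\chi^{(2)}_{F,p}(p)\,p^{-2s_\ast+1}\bigr|=\Bigl(|\chi^{(1)}_{F,p}(p^2)|\,p^{-2s_\ast+1}\Bigr)^{1/2}\Bigl(|\chi^{(2)}_{F,p}(p^2)|\,p^{-2s_\ast+1}\Bigr)^{1/2}<1,
\]
so the numerator equals $1$ plus a quantity of modulus strictly smaller than $1$ and cannot vanish. Thus $L_p(s_\ast,f)\neq 0$ for every $p$, and consequently $L(s_\ast,f)=\calZ(2-l,f)=\prod_pL_p(s_\ast,f)\neq 0$, as claimed.

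The step I expect to require the most care is the implication ``absolute convergence of $\prod_pL_p(s,f)$ at $s_\ast$ $\Longrightarrow$ the local bounds $|\chi^{(i)}_{F,p}(p^2)|\,p^{-2s_\ast+1}<1$'': this is the statement that the $p$-local Hecke generating series $L_p(s,f)$ has radius of convergence (in $p^{-s}$) exceeding $p^{-\re(s_\ast)}$, which is implicit in the convergence assertions for $Z(s,f)$, $\calZ(s,f)$ and $L(s,f)$ recalled in Section \ref{sec:standard_l_func} and in the local analysis of \cite{St3}; it must be extracted carefully so that it holds for \emph{all} $p$, including the finitely many ramified ones. An alternative is to substitute $s=2-l$ into \eqref{eq:global_general_zeta_func}, where the shifted arguments become $\frac m2-l+2$ and $\frac m2$: the Dirichlet $L$-value $L(\frac m2-l+2,\chi_A)$ is non-zero since $\frac m2-l+2>1$, and $Z(\frac m2,f)$ lies in its half-plane of absolute convergence because $\frac m2>\frac{3+\kappa}{2}$ is again equivalent to $\frac m2>l+3$; but one would then still need to exclude a cancellation in the finitely many anisotropic factors $\bigl(\tfrac{e(\sig(A_p)/8)}{|A_p|^{1/2}}-1\bigr)+L_p(\tfrac m2-l+2,\chi_{A_p^\perp})$ with $p\mid|A|$, which amounts to essentially the same local computation; I would therefore keep the direct route through \eqref{eq:local_L_function}.
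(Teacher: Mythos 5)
Your framework is sound and matches the paper's: the identity $s_\ast+\frac{\kappa}{2}-1=2-l$, the check that $\frac m2>l+3$ places $s_\ast$ in the half-plane of absolute convergence, and the reduction to the non-vanishing of every local factor of \eqref{eq:local_L_function}, hence of the numerator $1+\chi^{(1)}_{F,p}(p)\chi^{(2)}_{F,p}(p)p^{-2s_\ast+1}$, are all correct and are exactly how the paper begins. The gap is in the one step that carries all the content, namely your claimed bound $\bigl|\chi^{(1)}_{F,p}(p)\chi^{(2)}_{F,p}(p)\bigr|\,p^{-2s_\ast+1}<1$. First, the derivation is not valid: absolute convergence of the local power series at $s_\ast$ only forces the poles of the rational function \eqref{eq:local_L_function} to lie on or outside the circle $|p^{-s}|=p^{-s_\ast}$, so you get at best the non-strict bounds $\bigl|\chi^{(i)}_{F,p}(p^2)\bigr|\,p^{-2s_\ast+1}\le 1$, and $1+w$ with $|w|\le 1$ can vanish. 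Second, and decisively, the bound is false: the unramified characters $\chi_{F,p}$ are not unitary here, and the entire body of the paper's proof consists in computing $\chi^{(1)}_{F,p}(p)\chi^{(2)}_{F,p}(p)=\chi_{F,p}(m(p,p))$ explicitly via the Satake maps of \cite{St3}, the compatibility of the adelic and classical Hecke actions together with $\lambda_f(m(1,1))=1$, and Milgram's formula. The result is $\chi_{F,p}(m(p,p))=|A_p|e(-\sig(A_p)/4)\chi_{A_p^\perp}(p)$ for $p\mid|A|$ and $\chi_{A_p^\perp}(p)$ for $p\nmid|A|$. Since $p^{-2s_\ast+1}=p^{\frac m2+3l-5}$, the quantity you claim has modulus $<1$ in fact has modulus $|A_p|\,p^{\frac m2+3l-5}\ge 1$ when $p\mid|A|$ (the paper estimates it from below by $p$), and equals $p^{\frac m2+3l-5}\ge 1$ for the unramified primes in all but the smallest admissible case; the numerator is non-zero because this modulus is (generically) strictly \emph{larger} than $1$, not smaller.

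So the reduction to local non-vanishing is not by itself a proof: without the explicit determination of the Satake parameters $\chi_{F,p}(m(p,p))$ — for which you offer no substitute — the non-vanishing of the local factors does not follow from convergence considerations alone. Your proposed alternative through \eqref{eq:global_general_zeta_func} runs into the same wall, as you partly anticipate: the non-vanishing of $Z(\frac m2,f)$ is not a consequence of absolute convergence of that Dirichlet series but again requires an Euler product with controlled local factors, i.e.\ the same Satake computation.
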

\begin{proof}
  The condition $\frac{m}{2} > l + 3$ is equivalent to $\frac{m}{2} > \frac{3}{2}+\frac{\kappa}{2}$. Therefore, $\frac{m}{2}$ lies inside the region of convergence of  $Z(s,f)$. The identity \eqref{eq:special_values_zeta_standard_L} implies immediately that $L(s,f)$ converges for $s= -\frac{m}{4}-\frac{3}{2}l+3$. Alternatively, this can be confirmed directly by checking that
\[
3 -\frac{m}{4}-\frac{3l}{2}  > \frac{9}{2}-\frac{m}{2} - l.
\]
  $L(s,f)$ is defined as an Euler product. Thus, it suffices to prove that each factor of this product is non-zero at the point $s$ in question. In view of  \eqref{eq:local_L_function} this boils down to show that
  \begin{equation}\label{eq:nominator_L_function}
1 + \chi_{F,p}(m(p,p))p^{\frac{m}{2}+3l-5} =  1+\chi_{F,p}^{(1)}(p)\chi_{F,p}^{(2)}(p)p^{\frac{m}{2}+3l-5}
  \end{equation}
  is non-zero. For this we need to calculate the character $\chi_{F,p}$ evaluated at $m(p,p)$. It is determined by the algebra homomorphism $\lambda_{F,p}$ via the relation
\begin{equation}\label{eq:eigenvalue_char}
  \begin{split}
  \lambda_{F,p}(T_{k,l}) &=
  \begin{cases}
    \sum_{(r,s)\in \Z^2} S(\langle T_{k,l}, \varphi_p^{(0)}\rangle)(m(p^r,p^s))\chi_{F,p}(m(p^r,p^s)), &  (p, |A|)=1,\\
      \sum_{(r,s)\in \Z^2} \langle \calS(T_{k,l})(m(p^r,p^s)), \varphi_p^{(0)}\rangle\chi_{F,p}(m(p^r,p^s)), & p\mid |A|, 
  \end{cases}\\
  \end{split}
  \end{equation}
cf. equation  (7.21) in \cite{St3}. Here $S$ means the classical Satake map and $\calS$ the Satake map introduced in \cite{St3}, (5.12). These equations simplify significantly if we choose $T_{k,l} = T_{1,1}$. We find immediately from the  calculations in the proof of Theorem 5.11 of \cite{St3} that
\begin{equation}\label{eq:satake_non_coprime}
  \begin{split}
  (\calS T_{k,k}(m(p^k,p^k)) &= T_{k,k}(m(p^k,p^k))_{|S_{L_p}^{N(\Z_p)}}\\
  &=\frac{g(A_p)}{g_{p^k}(A_p)}\mathbbm{1}_{\calD_pm(p^k,p^k)\calD_p}\id_{S_{L_p}^{N(\Z_p)}}.
\end{split}
  \end{equation}
The same arguments lead in the case of $p\nmid |A|$ to
\begin{equation}\label{eq: satake_coprime}
(S T_{k,k}(m(p^k,p^k)) = \mathbbm{1}_{\calD_pm(p^k,p^k)\calD_p}\id_{S_{L_p}^{N(\Z_p)}}. 
\end{equation}
Replacing the Satake maps in \eqref{eq:eigenvalue_char} in both cases with the right-hand side of \eqref{eq:satake_non_coprime} and \eqref{eq: satake_coprime}, we obtain
\begin{equation}\label{eq:eigenvalue_satake_map}
  \lambda_{F,p}(T_{1,1}) =
  \begin{cases}
    \frac{g(A_p)}{g_{p}(A_p)}\chi_{F,p}(m(p,p)),  &  p\mid |A|,\\
    \chi_{F,p}(m(p,p)), & (p, |A|)=1.
    \end{cases}
\end{equation}
Combining the equations (3.14) and (3.15) of \cite{St3} with \eqref{eq;compatibility_notcoprime}, yields a relation between the eigenvalues $\lambda_{F.p}(T_{k,l})$ and $\lambda_f(T(m(p^{l-k},1))$:
\begin{equation}\label{eq:eigenvalue_relation}
  \begin{split}
  \lambda_{F,p}(T_{k,l}) &=
  \begin{cases}
    p^{(k-l)(\kappa/2-1)}\frac{g_{p^k}(A)}{g(A)}\frac{g(A)}{g_{p^{k+l}}(A)}\lambda_f(m(p^{l-k},1)), & p\mid |A|,\\
    p^{(k-l)(\kappa/2-1)}\lambda_f(m(p^{l-k},1)), & p\nmid |A|.
  \end{cases} \\
  \end{split}
\end{equation}
In the case $p\mid |A|$ we would like to further simplify the expression on the right-hand side of \eqref{eq:eigenvalue_relation}. First we keep in mind that $p^{k+l}$ is a square as $(k,l)$ is an element of $\Lambda_+$. This implies that $g_{p^{k+l}}(A)= g(A)$.  To evaluate the fraction $\frac{g_{p^k}(A)}{g(A)}$,  we decompose $A$ in the following way
\[
A = A_p \oplus A_p^{\perp},
\]
where $A_p^{\perp}$ is the orthogonal complement of the $p$-group $A_p$ in $A$. 
Following \cite{St3}, p. 10 ff, we have for any $r\in\N$
\begin{align*}
  \frac{g(A)}{g_{p^r}(A)} 
  &= \frac{g(A_p)}{g_{p^r}(A_p)}\frac{g(A_p^{\perp})}{g_{p^r}(A_p^\perp)} \\
  & = \frac{e(\sig(A_p)/8)}{|A_p|^{1/2}}\chi_{A_p^\perp}(p^r), 
\end{align*}
where $\chi_{A_p^\perp}$ is the quadratic character $n \mapsto \leg{n}{|A_p^\perp|}$. For the evaluation of of $\frac{g(A_p)}{g_{p^r}(A_p)}$ we have used Milgram's formula and the fact that $g_{p^r}(A_p) = |A_p|$.
Taking this into account,  we obtain in the case $p\mid |A|$ 
\begin{equation}\label{eq:eigenvalue_relation_simplified}
\lambda_{F,p}(T_{k,l}) = p^{(k-l)(\kappa/2-1)}\frac{g_{p^k}(A_p)}{g(A_p)}\chi_{A_p^\perp}(p^k)\lambda_f(m(p^{l-k},1)).
\end{equation}

Combining the equations \eqref{eq:eigenvalue_satake_map} and \eqref{eq:eigenvalue_relation_simplified}, we find 
\begin{equation}\label{eq:unramified_char}
  \begin{split}
  \chi_{F,p}(m(p,p)) &= 
  \begin{cases}
       \left(\frac{g_{p}(A_p)}{g(A_p)}\right)^2\chi_{A_p^\perp}(p)\lambda_f(m(1,1)),&  p\mid |A|, \\
      \chi_{A_p^\perp}(p)\lambda_f(m(1,1)), & (p, |A|)=1
  \end{cases} 
  \\
  & = \begin{cases}
    |A_p|e(-\sig(A_p)/4)\chi_{A_p^\perp}(p), &  p\mid |A|, \\
    \chi_{A_p^\perp}(p),  & (p, |A|)=1.
    \end{cases}
  \end{split}
\end{equation}
Replacing  $\chi_{F,p}(m(p,p))$ in \eqref{eq:nominator_L_function}  with the right-hand side of \eqref{eq:unramified_char} gives
\begin{equation}\label{eq:nominator_l_func_explicit}
  1 + \chi_{F,p}(m(p,p))p^{\frac{m}{2}+\frac{3}{2}l-5} =
  \begin{cases}
    1 + |A_p|e(-\sig(A_p)/4)\chi_{A_p^\perp}(p)p^{\frac{m}{2}+3l-5}, &  p\mid |A|, \\
    1 + \chi_{A_p^\perp}(p)p^{\frac{m}{2}+3l-5}, & (p, |A|)=1.
  \end{cases}
\end{equation}
  Note that $A_p$ is anisotropic and $\frac{m}{2} > l + 3$. Thus, as $|A_p|\ge p$, we have the estimate
  \[
  |A_p|p^{\frac{m}{2}+3l-5} \ge p
  \]
  and we can easily conclude that $1 + \chi_{F,p}(m(p,p))p^{\frac{m}{2}+\frac{3}{2}l-5}$ is non-zero in both of the above treated cases.
\end{proof}

\section{Injectivity of the Kudla-Millson theta lift}\label{sec:inj_kudla_millson}
In this section, we generalize the results in \cite{BF}, Chap. 4, to cusp forms of type $\rho_A$ where $L$ is not unimodular. We follow quite closely the steps of the proof in \cite{BF}, which carry over with some modifications to the general case. To this end, we  keep the notation of Section \ref{subsec:schwarz_forms} and return to the adelic setup of Section \ref{subsec:siegel_weil} as we want to make use of the adelic version of the Siegel-Weil formula.
Throughout this section we suppose that  the weight $\kappa$ is fixed and equal to
\[
\kappa = \frac{m}{2}+l.
\]

Let $C(V)$ be the Clifford algebra of the quadratic space $V$. It splits into a direct sum
\[
C(V)=C^+(V)\oplus C^{-}(V),
\]
where $C^+(V)$ is the subalgebra of even elements of $C(V)$.
We write $C^+(V)^\times$ for the invertible in $C^+(V)$. 
The general spin group $H=\GSpin(V)$ is defined by
\[
\GSpin(V) =\left\{g\in C^+(V)^\times\; \big\vert\; gVg^{-1}=V\right\}, 
\]
One can show that the action $\alpha$ of $H$ on $V$ given by
\begin{equation}\label{eq:act_gspin}
g\mapsto \alpha(g),\quad \alpha(g)(v)=gvg^{-1} 
\end{equation}
leaves the quadratic form $Q$ invariant. 
In fact, the group $H$ is connected to $\SO(V)$ by the following exact sequence
\[
1\longrightarrow \G_m\longrightarrow H\xlongrightarrow{\alpha} \SO(V)\longrightarrow 1.
\]
Note that the same construction generalizes to lattices $L$ in $V$  with the inclusion  $C(L) \subset C(V)$ and $\GSpin(L)\subset \GSpin(V)$. A good reference for further details is \cite{AD}.
Let $K_f^H = \prod_p K_p^H$ be an open compact subgroup of $H(\A_f)$ which leaves $L$ invariant and acts trivially on $A$ (see Remark \ref{rem:action_H_A_f} for for the action on $L$ and $L'/L$). To lighten the notation, we write  $K$ instead of $K_f^H$. Then there is a Shimura variety $X_K$ over $\Q$ associated to the Shimura datum $(D, H)$ whose $\C$-points are of the form  
\begin{equation}\label{eq:shimura_variety}
X_{K}(\C)  = H(\Q)\bs (D\times H(\A_f))/K.
\end{equation}
We identify $X_K$ with $X_K(\C)$.
It is well known (see \cite{Mi}, Lemma 5.13, or \cite{Ku1},p. 44-45) that the Shimura variety $X_K$ allows a finite decomposition into connected components. To describe these components, we note that by the strong approximation theorem one has 
\begin{equation}\label{eq:strong_approximation}
H(\A_f) = \bigsqcup_i H(\Q)^+ h_i K,
\end{equation}
with $h_i\in H(\A_f)$, where $H(\Q)^+ = H(\R)^+\cap H(\Q)$ and $H(\R)^+$ is the component of the identity  of $H(\R)$. Then
\begin{equation}\label{eq:shimura_variety_decomp}
X_K \cong \bigsqcup_i \Gamma_{i}\bs D^+
\end{equation}
with $\Gamma_i = H(\Q)^+\cap h_iKh_i^{-1}$ being a congruence subgroup of $H(\Q)^+$.
Throughout the rest of the paper we assume that the image of $H(\Q)^+\cap K$ in $SO^+(V)(\A_f)$ is isomorphic to a subgroup of finite index of the discriminant kernel $\Gamma(L)$ (see Remark \ref{rem:rel_adelic_classic_orth_modforms}). 
It is well known that $X_K$ has the analytic structure of a complex orbifold and is  a complex manifold if $K$ is neat. The same holds for the locally symmetric spaces $\Gamma_i\bs D^+$.
The isomorphism \eqref{eq:shimura_variety_decomp} yields
\begin{equation}\label{eq:differential_forms_x_k}
\calA^q(X_K) \cong \left[\calA^q(D)\otimes C^\infty(H(\A_f))\right]^{H(\Q)\times K} \cong \bigoplus_i \calA^q(D^+)^{\Gamma_i},
\end{equation}
where the second isomorphism is obtained by mapping a differential form $\eta(z,h)$ to the vector $(\eta(z,h_i))_{i}$, see \cite{Ku1}, p. 69.

\begin{remark}\label{rem:action_H_A_f}
  \begin{enumerate}
  \item[i)]
    The action of $H(\A_f)$ on $L$ and $L'$ via $\alpha$ in \eqref{eq:act_gspin} is understood as action on $L=\cap_p \left(V(\Q)\cap L_p\right)=V(\Q)\cap \hat{L}$ and $L'=\cap_p\left(V(\Q)\cap L'_p\right)$, respectively.
    We write
    \[
    L^h = \alpha(h)(L).
    \]
    As $(\alpha(h)(L))' = \alpha(h)(L')$, the action of $h\in H(\A_f)$ on $L$ and $L'$ induces an action on the discriminant group $A$. We obviously have  $A\cong (L^h)'/L^h$ and write $A^h$ for $(L^h)'/L^h$ and $h\mu=\alpha(h)(\mu)$ for any $\mu\in A$. As is pointed out in \cite{HMP}, the map
  \[
  h\cdot \sum_{\mu\in A}c_\mu \frake_\mu = \sum_{\mu\in A}c_\mu \frake_{h\mu}
  \]
  defines an isomorphism $\C[A]\cong \C[A^h]$  and isomorphic representations $\rho_A$ and $\rho_{A^h}$ for each $h\in H(\A_f)$. Consequently,
  the spaces $H_{\kappa,A}$ and $H_{\kappa,A^h}$ of weak Maass forms are isomorphic (by the map $\displaystyle f\mapsto h\cdot f$) and clearly the same is true for  all specified subspaces in Section \ref{subsec:weak_maas_forms}. We write $f^h$ for $h\cdot f$.

  We will later make use of the following fact, which can be found in \cite{Ho}, Lemma (10.2.8): If the image of $\Gamma_K:=\Gamma_1=H(\Q)^+\cap K$ is exactly $\Gamma(L)$, then the image of $\Gamma_i=H(\Q)^+\cap h_iKh_i^{-1}$ is given by $\Gamma(L^{h_i})$. 
\item[ii)]
Let  $\varphi_{q,l}$ be the Schwartz form \eqref{eq:funke_millson_form} and $\varphi_\mu, \; \mu\in A$ as in \eqref{eq:familiy}.   Attached to $\varphi_{q,l}$ and $\varphi_\mu$, we define a vector valued Siegel theta function on $D\times H(\A_f)$.
  Similar to \eqref{eq:theta_function_adelic} we put
  \begin{align*}
    \vartheta((g_\tau,1_f),(z,h),\varphi_{q,l}\otimes\varphi_\mu) &= \sum_{x\in V(\Q)}\varphi_\mu(h^{-1}x)\omega((g_\tau,1_f))\varphi_{q,l}(x,z), 
  \end{align*}
  where $h\in H(\A_f)$ and $g_\tau = n(u)m(\sqrt{v})\in G'(1)$ moves $i$ to $\tau=u+iv\in \H$.  Analogous to  \eqref{eq:theta_gaussian}, we subsequently set
  \begin{equation}\label{eq:theta_schwartz_form}
    \begin{split}
      \Theta_A(\tau,(z,h),\varphi_{q,l}) &= \sum_{\mu\in A}\vartheta((g_\tau,1_f),(z,h),\varphi_{q,l}\otimes\varphi_\mu)\varphi_\mu\\
      &=v^{-\kappa/2}\sum_{\mu\in A}\sum_{\lambda\in h(\mu+L)}\varphi_{q,l}(\sqrt{v}\lambda,z)e^{\pi i(\lambda,\lambda)u}\varphi_\mu,
      \end{split}
  \end{equation}
 where $h(\mu+L)$ is meant in sense of part i) of this remark. We sometimes use the notation $\Theta_\mu(\tau,(z,h),\varphi_{q,l})$ for the $\mu$-th component of $\Theta_A$. 

\end{enumerate}
\end{remark}

Note that $\Theta_A$  descends to a $q$-form on $X_K$ since $K$ leaves $A$ invariant and stabilizes $L$. In this case $\varphi_\mu$ is also  invariant under the action of $K$. 
Based on  Theorem \ref{thm:properties_varphi_q_l}, the usual arguments then yield the following important properties of $\Theta_A(\tau,(z,h),\varphi_{q,l})$ (cf. \cite{FM}, Prop. 7.1, and \cite{Ku2}, p. 301). 
\begin{theorem}\label{thm:theta_prop}
  Let $K\subset H(\A_f)$ be as above. Then $\Theta_A(\tau,(z,h),\varphi_{q,l})$ defines a $\Sym^l(V)$-valued closed $q$-form on the Shimura variety $X_K$. Also, as a function  on $\H$ it is a non-holomorphic vector-valued modular form of weight $\kappa$ transforming according to $\rho_A$. 
\end{theorem}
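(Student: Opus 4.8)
The plan is to verify the three assertions of Theorem \ref{thm:theta_prop} separately, in each case reducing to the corresponding local statement for the Schwartz form $\varphi_{q,l}$ from Theorem \ref{thm:properties_varphi_q_l} together with the standard machinery for Siegel theta functions. First I would establish that $\Theta_A(\tau,(z,h),\varphi_{q,l})$ descends to a well-defined $\Sym^l(V)$-valued differential $q$-form on $X_K$. The differential-form part comes directly from $\varphi_{q,l}(x,z)$ taking values in $\bigwedge^q \frakp^\ast\otimes\Sym^l(V)$ and the isomorphism \eqref{eq:eval_basepoint_vec_val}; one checks that $(z,h)\mapsto \Theta_A(\tau,(z,h),\varphi_{q,l})$ is invariant under $H(\Q)$ acting diagonally and under the right action of $K$ on $H(\A_f)$. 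Left $H(\Q)$-invariance follows because the summation over $x\in V(\Q)$ is stable under $\gamma\in H(\Q)$ (via $\alpha$) and because $\varphi_{q,l}$ is built from the Gaussian via $G(\R)$-equivariant operators, so $L_\gamma^\ast\varphi_{q,l}(\gamma x,\gamma z)=\varphi_{q,l}(x,z)$; right $K$-invariance follows because $K$ stabilizes $\hat L$ and acts trivially on $A$, so each $\varphi_\mu$ is $K$-fixed and the sets $h(\mu+L)$ only depend on the coset $hK$. Combined with \eqref{eq:differential_forms_x_k}, this places $\Theta_A$ in $\calA^q(X_K)\otimes\Sym^l(V)$.

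Next I would prove closedness. Here the key input is Theorem \ref{thm:properties_varphi_q_l}(iii): $\varphi_{q,l}$ is a closed $q$-form on $D$ with values in $\Sym^l(V)$. Since $d$ (the exterior derivative in the $z$-variable on $D$, or equivalently on $D^+$ in each component of \eqref{eq:shimura_variety_decomp}) commutes with the locally finite sum defining $\Theta_A$ — the sum converges absolutely and uniformly on compacta together with all $z$-derivatives, because $\varphi_{q,l}(\sqrt v\lambda, z)$ decays like a Gaussian in $\lambda$ uniformly for $z$ in compact sets — we get $d\,\Theta_A(\tau,(z,h),\varphi_{q,l}) = \sum_{\mu}\sum_\lambda v^{-\kappa/2}\, d\varphi_{q,l}(\sqrt v\lambda, z)\, e^{\pi i(\lambda,\lambda)u}\varphi_\mu = 0$ termwise. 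This shows $\Theta_A \in \calZ^q(X_K)\otimes\Sym^l(V)$.

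Finally I would treat the modular transformation behavior in $\tau$. The strategy is the usual one: it suffices to check the transformation under the generators $T$ and $S$ of $\Gamma=\SL_2(\Z)$, using the explicit Weil representation formulas \eqref{eq:weil_rep_sl_2} for $\omega_\infty$ at the Archimedean place and the finite Weil representation $\rho_A$ of Definition \ref{def:weil_finite_repr} at the finite places. The behavior under $T=n(1)$ is immediate from $\omega_\infty(n(1))$ acting by $e^{\pi i(x,x)}$ and the identity $e(bQ(\mu))$ in the definition of $\rho_A(T)$, once one accounts for the weight factor $v^{-\kappa/2}$ and the eigenvalue statement Theorem \ref{thm:properties_varphi_q_l}(ii) that $\varphi_{q,l}$ is a weight $\frac m2 + l = \kappa$ vector under $K'_\infty(1)$. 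The behavior under $S$ is the substantive computation: one applies Poisson summation on each lattice coset $\mu+L$ (equivalently uses the adelic fact that $\vartheta$ is automorphic on $G'(1)(\Q)\bs G'(1)(\A)$, restricting $g$ to $G'(1)(\R)$ and sampling at $g_\tau$), which converts the sum over $\mu+L$ into a sum over $A$ with the dual lattice, producing exactly the Gauss-sum normalization $e(-\sig(A)/8)/|A|^{1/2}\sum_\nu e(-(\nu,\mu))$ appearing in $\rho_A(S)$, and the Archimedean Fourier transform contributes the factor $e(-\sig(L)/8)$ together with the correct power of $(c\tau+d)$; combining these gives the weight-$\kappa$ transformation law for $\rho_A$. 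Note $\kappa=\frac m2+l$ and $2\kappa\equiv m\equiv\sig(L)\pmod 2$ since $q+l$ is assumed even and $m=p+q$, so the consistency condition \eqref{eq:sig_weight} is met. The main obstacle is bookkeeping in this $S$-transformation step — carefully tracking the interplay of the weight factor $v^{-\kappa/2}$, the $K'_\infty(1)$-eigenvalue of $\varphi_{q,l}$, the Archimedean Fourier transform with its $\sig(L)/8$ phase, and the finite-place Poisson summation that realizes $\rho_A(S)$ — but this is precisely the computation carried out in \cite{FM}, Prop. 7.1 and \cite{Ku2}, p. 301 in the scalar-valued and generalized settings, and it transfers to our vector-valued family $\{\varphi_\mu\}_{\mu\in A}$ without new difficulties.
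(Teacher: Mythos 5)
Your proposal is correct and follows essentially the same route as the paper: the paper proves this theorem by invoking "the usual arguments" based on Theorem \ref{thm:properties_varphi_q_l} together with the references \cite{FM}, Prop.~7.1 and \cite{Ku2}, p.~301, which is exactly the argument you spell out (descent via $H(\Q)$- and $K$-invariance, termwise closedness from part (iii), and the $T$/$S$ generator computation via the Weil representation and Poisson summation from part (ii)). Your write-up simply makes explicit what the paper leaves to the cited sources.
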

In view of this theorem and the fact that $\vartheta((g_\tau,1_f), h, \varphi_{q,l}(z)\otimes \varphi_\mu)$ is slowly increasing in $\tau$ (see \cite{Ku2}, p. 324), the following definition makes sense.

\begin{definition}\label{def:kudla_millson_theta_lift}
Let $f = \sum_{\lambda\in L'/L}f_\lambda\frake_\lambda\in S_{\kappa,A}$ be a cusp form.
Then $f\mapsto \Lambda(f)$ with
\begin{equation}\label{eq:kudla_millson_theta_lift}
  \Lambda(f)(z,h):=\int_{\Gamma\bs \H}\langle f(\tau), \Theta_A(\tau,(z,h),\varphi_{q,l})\rangle \im(\tau)^\kappa d\mu(\tau)
\end{equation}
defines a linear map
 \[
\Lambda: S_{\kappa,A}\longrightarrow \calZ^q(X_K,\widetilde{\Sym}^l(V))
\]
Here $\widetilde{\Sym}^l(V)$ is the local system on $D$ associated to $\Sym^l(V)$. 

Following \cite{BF}, the $L^2$-norm of $\Lambda$ is defined as
\begin{equation}\label{eq:l2_norm}
\|\Lambda(f)\|_2^2 = \int_{X_{K}}\Lambda(f)\wedge \ast\overline{\Lambda(f)}.
\end{equation}
\end{definition}

The subsequent proposition ensures that $\Lambda(f)$ is square integrable. It follows from the scalar valued companion statement in Prop. 4.1, \cite{BF}. To phrase this result, we introduce the following notation:

In accordance with \eqref{eq:theta_schwartz_form} we write 
\begin{equation}\label{eq:theta_schwarz_form_A_A}
  \begin{split}
    &  \Theta_{A^2}(\tau_1,\tau_2,(z,h),\phi_{q,l})\\
    &= (v_1v_2)^{-\kappa/2}\sum_{(\lambda,\nu)\in A^2}\left(\sum_{{\bf x}\in V^2(\Q)}\varphi_{(\lambda,\mu)}(h^{-1}{\bf x})\omega_\infty(\iota(g_{\tau_1},g_{\tau_2}))\phi_{q,l}({\bf x},z)\right)\varphi_{(\lambda,\mu)},
  \end{split}
\end{equation}
where $\phi_{q,l}$ is the Schwartz function  in \eqref{eq:phi_q_l}, $\varphi_{(\lambda,\mu)} = \varphi_\lambda\otimes\varphi_\mu$ and $\iota$ is the standard embedding in \eqref{eq:embed_sl}. Finally, we set
\begin{equation}\label{eq:theta_integral}
  I(\tau_1,\tau_2,\phi_{q,l}) = \int_{X_K}\Theta_{A^2}(\tau_1,\tau_2,\phi_{q,l})\mu,
  \end{equation}
where $\mu$ is the volume form on $D$ specified in Section \ref{subsec:schwarz_forms}.   A similar theta integral is also studied in \cite{Ku2}. Given Proposition \ref{prop:theta_integrals_relation}, this integral exists if Weil's convergence criterion is fulfilled.  

\begin{proposition}\label{prop:l2_norm_theta_integral}
  Take the assumptions of Definition \ref{def:kudla_millson_theta_lift} and let $m > r_0 + 3$ such that Theorem \ref{thm:siegel_weil} holds. Then $\Lambda(f)$ is square integrable and
  \begin{equation}\label{eq:l2_norm_formula}
  \|\Lambda(f)\|_2^2 = \left(f(\tau_1)\otimes \overline{f(\tau_2)},I(\tau_1,-\overline{\tau}_2, \phi_{q,l})\right),
  \end{equation}
  where $(\cdot,\cdot)$ is the Petersson scalar product on $S_{\kappa,A}\otimes S_{\kappa,A}$.
\end{proposition}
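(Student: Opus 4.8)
The plan is to reduce this to its scalar-valued companion, \cite{BF}, Prop.~4.1, by unfolding both sides of \eqref{eq:l2_norm}. First I would insert the definition \eqref{eq:kudla_millson_theta_lift} of $\Lambda(f)$ (and of its conjugate) into \eqref{eq:l2_norm}. The wedge against the Hodge star couples the two $\Sym^l(V)$-valued $q$-forms $\varphi_{q,l}(x_1,z)$ and $\varphi_{q,l}(x_2,z)$ via the metric on the local system $\widetilde{\Sym}^l(V)$, and by \eqref{eq:phi_q_l} this coupling is, pointwise on $D$, the scalar Schwartz function $\phi_{q,l}((x_1,x_2),z)$. Performing the two lattice sums of $\Theta_A(\tau_1,\cdot,\varphi_{q,l})$ and $\overline{\Theta_A(\tau_2,\cdot,\varphi_{q,l})}$ simultaneously then turns the product of these $\C[A]$-valued single theta series into the $\C[A^2]$-valued genus-two theta series $\Theta_{A^2}(\tau_1,\tau_2,(z,h),\phi_{q,l})$ of \eqref{eq:theta_schwarz_form_A_A}; here one uses the isomorphism \eqref{eq:tensorproduct_group_ring} together with the multiplicativity $\varphi_{(\lambda,\nu)}=\varphi_\lambda\otimes\varphi_\nu$ of the family \eqref{eq:familiy}, and the complex conjugation of the $\tau_2$-factor is absorbed into the substitution $\tau_2\mapsto-\overline{\tau}_2$, exactly as in \cite{BF}.

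Granting the interchanges of integration discussed below, this computation should produce
\[
\|\Lambda(f)\|_2^2 = \int_{\Gamma\bs\H}\int_{\Gamma\bs\H}\left\langle f(\tau_1)\otimes\overline{f(\tau_2)},\; \int_{X_K}\Theta_{A^2}(\tau_1,-\overline{\tau}_2,(z,h),\phi_{q,l})\,\mu\right\rangle (v_1v_2)^{\kappa}\,d\mu(\tau_1)\,d\mu(\tau_2),
\]
where $\langle\cdot,\cdot\rangle$ is the scalar product on $\C[A^2]\cong\C[A]\otimes\C[A]$. Next I would identify the inner integral over $X_K$ with $I(\tau_1,-\overline{\tau}_2,\phi_{q,l})$ of \eqref{eq:theta_integral}; since $f$ is a holomorphic cusp form of weight $\kappa$ and type $\rho_A$ in each variable, the remaining double integral over $\Gamma\bs\H$ with weight factor $(v_1v_2)^\kappa$ is then the Petersson scalar product on $S_{\kappa,A}\otimes S_{\kappa,A}$ of $f(\tau_1)\otimes\overline{f(\tau_2)}$ against $I(\tau_1,-\overline{\tau}_2,\phi_{q,l})$. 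That $I(\tau_1,-\overline{\tau}_2,\phi_{q,l})$ is an admissible second argument of this pairing follows from the transformation and growth properties of $\phi_{q,l}$ recorded in \cite{BF}, Section~3 (and \eqref{eq:rel_xi_phi_q_l}, Lemma~\ref{lem:properties_xi}). This would yield \eqref{eq:l2_norm_formula}.

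The hard part — and the source of the hypothesis $m>r_0+3$ — is the square-integrability of $\Lambda(f)$ and the legitimacy of the two applications of Fubini. I would handle both on each connected component $\Gamma_i\bs D^+$ of \eqref{eq:shimura_variety_decomp} (through \eqref{eq:differential_forms_x_k}), reducing to the scalar situation of \cite{BF}, Prop.~4.1: because $f$ is a cusp form, each component $f_\lambda(\tau)$ decays exponentially at the cusp, so the $\tau_1$- and $\tau_2$-integrals converge absolutely once the $X_K$-integral is under control; and the integral $\int_{X_K}\Theta_{A^2}(\tau_1,\tau_2,(z,h),\phi_{q,l})\,\mu$ converges, uniformly for $\tau_1,\tau_2$ in compact subsets of $\H$, by Weil's convergence criterion, which for $n=2$ reads $m-r_0>3$; this is the content of Proposition~\ref{prop:theta_integrals_relation}. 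With absolute convergence in hand, Fubini applies and the formal computation becomes rigorous. Beyond the bookkeeping of the $\C[A^2]\cong\C[A]\otimes\C[A]$ components and of the $\widetilde{\Sym}^l(V)$-metric in \eqref{eq:phi_q_l}, I expect no new difficulty compared with \cite{BF}.
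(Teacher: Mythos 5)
Your proposal is correct and follows essentially the same route as the paper's proof: both reduce componentwise (via the $\C[A^2]\cong\C[A]\otimes\C[A]$ identification and the multiplicativity of the $\varphi_\mu$) to the scalar-valued argument of \cite{BF}, Prop.~4.1, use the wedge identity turning the product of the two theta forms into the genus-two theta series with kernel $\phi_{q,l}$ at $(\tau_1,-\overline{\tau}_2)$, and justify the interchange of the $X_K$- and $\Gamma\backslash\H$-integrals by Weil's convergence criterion ($m-r_0>3$ for $n=2$) together with the cuspidality of $f$. Your write-up is in fact somewhat more explicit than the paper's about the Fubini step and where the hypothesis $m>r_0+3$ enters, but it is the same argument.
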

\begin{proof}
  As in Prop. 4.1, \cite{BF}, we argue that $\|\Lambda(f)\|_2^2$ indeed exists if \eqref{eq:l2_norm_formula} holds. As proved in \cite{Ku2}, Theorem 3.1, in a more general and complicated situation and also done in the scalar valued case in Prop. 4.1  of \cite{BF},  we may exchange the order of the integrals over $X_K$ and $\Gamma\bs \H$. Following the remaining proof of Prop. 4.1, for any pair $\lambda,\nu\in A$ we also have
  \[
  \Theta_\lambda(\tau_1,\varphi_{q,l}\otimes\varphi_\lambda)\wedge \overline{\Theta_\nu(\tau_2,\ast\varphi_{q,l}\otimes\varphi_\nu)} = \Theta_{(\lambda,\nu)}(\tau_1,-\overline{\tau_2},\phi_{q,l}\otimes(\varphi_\lambda\otimes\varphi_\nu))\mu,
  \]
  where $\Theta_{(\lambda,\nu)}$ is the component of  $\Theta_{A^2}$  belonging to the index $(\lambda,\mu)\in A^2$.
  In view of \eqref{eq:tensor_product_forms} we obtain the assertion.
\end{proof}

In the next result we want to replace the theta kernel $\phi_{q,l}$ in  the integral $I$ with the Schwartz function $\xi$ in \eqref{eq:xi_q_l} with the help of the relation \eqref{eq:rel_xi_phi_q_l}. To this end, we have to interpret $\xi$ as an element of $\displaystyle \left[S(V(\R)^2)\otimes C^\infty(D)\right]^{G(\R)}$. In view of Lemma \ref{lem:properties_xi}, $i)$, we may set 
\begin{equation}\label{eq:xi_on_D}
  \xi({\bf x}, z) = \xi(g^{-1} {\bf x})
  \end{equation}
with $g \in G(\R)$ such that  $g z_0 = z$, where $z_0\in D$ is a fixed  base point.
 

\begin{proposition}\label{prop:l2_norm}
  Let $\xi$ be the Schwartz function in \eqref{eq:xi_on_D}, $\theta_{A^2}(\tau_1,\tau_2, (z,h), \xi)$ and $I(\tau_1,\tau_2,z,\xi)$ as in \eqref{eq:theta_schwarz_form_A_A} and \eqref{eq:l2_norm_formula}, respectively with $\phi_{q,l}$ replaced by $\xi$. Then
  \begin{equation}\label{eq:l2_norm_formula_1}
    \|\Lambda(f)\|_2^2 = \left(f(\tau_1)\otimes \overline{f(\tau_2)},I(\tau_1,-\overline{\tau_2}, \xi)\right).
    \end{equation}
  Moreover, $\Lambda$ vanishes identically if $p=1$ and $q+l>1$.
\end{proposition}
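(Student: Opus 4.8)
The plan is to derive \eqref{eq:l2_norm_formula_1} from Proposition~\ref{prop:l2_norm_theta_integral} by exploiting the decomposition \eqref{eq:rel_xi_phi_q_l} of the kernel $\phi_{q,l}$. Inserting $\phi_{q,l} = \xi + \omega(R_{11})\omega(R_{22})\psi$ into the theta integral \eqref{eq:theta_integral} and using linearity gives
\[
I(\tau_1,-\overline{\tau}_2,\phi_{q,l}) = I(\tau_1,-\overline{\tau}_2,\xi) + I\bigl(\tau_1,-\overline{\tau}_2,\omega(R_{11})\omega(R_{22})\psi\bigr),
\]
so in view of \eqref{eq:l2_norm_formula} it is enough to show that the last summand is orthogonal to $f(\tau_1)\otimes\overline{f(\tau_2)}$ with respect to the Petersson product on $S_{\kappa,A}\otimes S_{\kappa,A}$.

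To see this, recall from Section~\ref{subsec:schwarz_forms} that $R_{11} = \iota(R,0)$ and $R_{22} = \iota(0,R)$, with $R$ corresponding to the Maass raising operator. Hence, after pulling back along $\iota(g_{\tau_1},g_{\tau_2})$, the operator $\omega(R_{11})$ (resp.\ $\omega(R_{22})$) acts on the theta function as the weight-$\kappa$ raising operator $R_\kappa$ in the variable $\tau_1$ (resp.\ $\tau_2$), so that, up to a nonzero constant,
\[
I\bigl(\tau_1,\tau_2,\omega(R_{11})\omega(R_{22})\psi\bigr) = R_{\kappa,\tau_1}R_{\kappa,\tau_2}\, I(\tau_1,\tau_2,\psi).
\]
Substituting $\tau_2\mapsto-\overline{\tau}_2$ and integrating by parts over $\Gamma\bs\H$ in each of the two variables, one moves $R_\kappa$ onto $f$ (resp.\ onto $\overline{f}$ in the conjugated variable), picking up the lowering operator $L_\kappa$ up to sign. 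Since a cusp form is holomorphic one has $L_\kappa f = 0$, and by the same token the conjugate operator annihilates $\overline{f(\tau_2)}$; moreover the boundary contributions at the cusp vanish because $f$ decays rapidly while $I(\tau_1,-\overline{\tau}_2,\psi)$ is of moderate growth. Thus the $\psi$-term contributes nothing and \eqref{eq:l2_norm_formula_1} follows.

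For the last assertion I would invoke Lemma~\ref{lem:properties_xi}, iii): if $p = 1$ and $q+l > 1$, then $\xi\equiv 0$, hence $I(\tau_1,-\overline{\tau}_2,\xi) = 0$ and \eqref{eq:l2_norm_formula_1} gives $\|\Lambda(f)\|_2^2 = 0$ for every $f\in S_{\kappa,A}$. As $\Lambda(f)$ is a smooth, square-integrable form on $X_K$ and \eqref{eq:l2_norm} defines a genuine norm, this forces $\Lambda(f) = 0$; since $f$ was arbitrary, $\Lambda$ vanishes identically.

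The main obstacle is the rigorous justification of the analytic manipulations: one has to check absolute convergence of the iterated theta integrals (so that Fubini applies and the integrations over $X_K$ and $\Gamma\bs\H$ may be interchanged, as in \cite{Ku2}, Thm.\ 3.1, and \cite{BF}, Prop.\ 4.1), establish the moderate growth of $I(\tau_1,-\overline{\tau}_2,\psi)$ towards the cusp needed to discard the boundary terms in the integration by parts, and keep careful track of how the Lie algebra elements $R_{11},R_{22}\in\frakp_+$ translate into differential operators in $\tau_1$ and $\tau_2$ under the substitution $\tau_2\mapsto-\overline{\tau}_2$, so that they genuinely kill the holomorphic, resp.\ anti-holomorphic, factor. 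Once these points are settled, the remaining steps are formal.
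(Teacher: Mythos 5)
Your argument is correct and is essentially the same as the paper's: the paper simply cites Prop.~4.3 and Cor.~4.4 of \cite{BF}, whose content is exactly your computation, namely that the $\omega(R_{11})\omega(R_{22})\psi$-term in the decomposition \eqref{eq:rel_xi_phi_q_l} pairs to zero against $f(\tau_1)\otimes\overline{f(\tau_2)}$ because the raising operators are (up to sign) adjoint to lowering operators that annihilate the holomorphic cusp form. Your deduction of the vanishing of $\Lambda$ for $p=1$, $q+l>1$ from Lemma~\ref{lem:properties_xi}, iii) also matches the paper.
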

\begin{proof}
  The result is an immediate consequence  from Prop. 4.3 and Corollary 4.4  in \cite{BF} combined with \eqref{eq:l2_norm_formula}. The second assertion is due to Lemma \ref{lem:properties_xi}, $iii)$. 
  \end{proof}

The next proposition shows that the theta integral $I$ over $X_K$ can be written in terms of the theta integral in the Siegel-Weil formula in \eqref{eq:l2_norm_formula_1}. This justifies the convergence of $I$, which was required in Prop. \ref{prop:l2_norm_theta_integral}. The analogous statement in a scalar valued setting can be found in \cite{BF}, Prop. 4.6, \cite{Ku2}, Prop. 4.17.  These  papers  presume that $X_K$ is a manifold and  require the condition
\begin{equation}\label{eq:center_K}
  Z(\A_f)\cap K = \widehat{\Z}^\times
  \end{equation}
  to be satisfied, where $Z(\A_f)$ means the center of $H(\A_f)$. However, the proof of Prop. 4.17 in \cite{Ku2} should still work if $X_K$ is an orbifold. But for our purposes it sufficient for $X_K$ to be a complex manifold.  


\begin{proposition}\label{prop:theta_integrals_relation}
Let $m > r_0 + 3$ such that Theorem \ref{thm:siegel_weil} holds. Suppose further that the image  $\alpha(H(\Q)^+\cap K)$ in $SO(V)(\A_f)$ is isomorphic to the discriminant kernel and that $Z(\A_f)\cap K\cong \widehat{\Z}^\times$. Then 
  \begin{equation}\label{eq:theta_integrals_relation}
    \frac{1}{\vol(X_K,\mu)}I(\tau_1,\tau_2,\xi) = (v_1v_2)^{-\kappa/2}\int_{O(V)(\Q)\bs O(V)(\A)}\vartheta_L(\iota(g_{\tau_1},g_{\tau_2}),h,\xi)dh.
    \end{equation}
\end{proposition}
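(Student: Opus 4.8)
The strategy is to unfold the integral over $X_K$ into an integral over the orthogonal group, exactly mirroring \cite{Ku2}, Prop.~4.17 and \cite{BF}, Prop.~4.6, but keeping track of the vector valued bookkeeping provided by the family $\{\varphi_\mu\}_{\mu\in A^2}$. First I would recall from \eqref{eq:shimura_variety_decomp} that $X_K\cong \bigsqcup_i \Gamma_i\bs D^+$ with $\Gamma_i$ the image of $H(\Q)^+\cap h_iKh_i^{-1}$, and from Remark~\ref{rem:action_H_A_f} that, under the hypothesis $\alpha(H(\Q)^+\cap K)\cong \Gamma(L)$, the $i$-th component is $\Gamma(L^{h_i})\bs D^+$. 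Correspondingly, by \eqref{eq:theta_schwarz_form_A_A} the $i$-th component of $\Theta_{A^2}(\tau_1,\tau_2,(z,h_i),\xi)$ involves the lattice $L^{h_i}$, and summing the integrals of these components against the volume form $\mu$ gives $I(\tau_1,\tau_2,\xi)$ as a sum over the (finitely many) genus classes represented by the $h_i$.

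\textbf{Key steps.} The second step is to re-express this finite sum over $i$ adelically. Using the Iwasawa-type decomposition \eqref{eq:strong_approximation} of $H(\A_f)$ and the isomorphism $H\to \SO(V)$ from the exact sequence (whose kernel is the center $\G_m$), together with the hypothesis $Z(\A_f)\cap K\cong \widehat{\Z}^\times$ which ensures that the center contributes the correct normalizing volume factor, the finite sum $\sum_i \int_{\Gamma(L^{h_i})\bs D^+}(\cdots)$ becomes, up to the factor $\vol(X_K,\mu)$, an integral over $\SO(V)(\Q)\bs \SO(V)(\A)$ of the adelic theta kernel. At this point I would pass from $\SO(V)$ to $O(V)$: since $\xi$ is $K_\infty$-invariant by Lemma~\ref{lem:properties_xi}~$i)$ and in particular $O(V(\R))$-invariant, and since $\varphi_\mu$ is $O(V(\A_f))$-invariant enough to make $\vartheta_L$ well behaved, the integral over $O(V)(\Q)\bs O(V)(\A)$ and over $\SO(V)(\Q)\bs \SO(V)(\A)$ agree up to the choice of Haar normalization fixed after \eqref{eq:theta_integral_adelic} (Tamagawa measure giving $\vol=1$). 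Carefully matching the volume normalizations on both sides yields the factor $\frac{1}{\vol(X_K,\mu)}$ in front of $I$. The remaining step is purely local at the Archimedean place: the factor $(v_1v_2)^{-\kappa/2}$ appears because in \eqref{eq:theta_schwarz_form_A_A} we stripped this factor out when writing $\Theta_{A^2}$ via $\omega_\infty(\iota(g_{\tau_1},g_{\tau_2}))$, exactly as in \eqref{eq:theta_schwartz_form}; comparing with the definition of $\vartheta_L$ in \eqref{eq:theta_vec_val_1} (which does not carry this normalization) produces the stated power of $v_1v_2$ on the right.

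\textbf{Main obstacle.} The bookkeeping I expect to be most delicate is the simultaneous matching of three things: (a) the decomposition of $X_K$ into components indexed by $h_i$ and the corresponding shifted lattices $L^{h_i}$, (b) the Haar measure normalization on $O(V)(\Q)\bs O(V)(\A)$ versus the counting/volume measure implicit in $\sum_i \vol(\Gamma(L^{h_i})\bs D^+)$, and (c) the requirement (invoked in Prop.~\ref{prop:l2_norm_theta_integral}) that convergence of $I$ be deduced from Weil's convergence criterion $m-r_0>n+1$ with $n=2$, i.e. $m>r_0+3$, which we have assumed. The convergence itself is not the hard part once the identity \eqref{eq:theta_integrals_relation} is in place, since the right-hand side is the Siegel--Weil theta integral of Section~\ref{subsec:siegel_weil}; the subtlety is rather to justify the unfolding manipulation before one knows absolute convergence, which I would handle exactly as in \cite{Ku2}, Theorem~3.1 and Prop.~4.17, namely by first working with a truncation of $\xi$ (or of the fundamental domain) and then passing to the limit, the passage being legitimate precisely because of the convergence criterion. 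The vector valued features — the index $\mu\in A^2$, the characteristic functions $\varphi_\mu=\mathbbm{1}_{\mu+\hat L^2}$, and the isomorphism $\C[A]\cong\C[A^{h}]$ of Remark~\ref{rem:action_H_A_f} — do not introduce new analytic difficulties: each $\mu$-component is handled by the same scalar valued argument of \cite{BF}, Prop.~4.6, applied to the shifted lattice, and the finitely many components are then reassembled using \eqref{eq:theta_vec_val_1} and \eqref{eq:tensor_product_forms}.
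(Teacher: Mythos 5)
Your proposal is correct and follows essentially the same route as the paper: the paper's proof likewise decomposes $X_K$ into the components $\Gamma(L^{h_i})\backslash D^+$ (using Remark \ref{rem:action_H_A_f} to identify $\alpha(\Gamma_i)$ with $\Gamma(L^{h_i})$ and to guarantee each component is a manifold) and then invokes the scalar-valued statements, Prop.~4.6 of \cite{BF} and Prop.~4.17 of \cite{Ku2}, componentwise in $\mu$ and in $i$. Your additional discussion of the measure normalization, the role of $Z(\A_f)\cap K\cong\widehat{\Z}^\times$, and the $(v_1v_2)^{-\kappa/2}$ factor simply makes explicit what the paper delegates to those references.
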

\begin{proof}
By Remark \ref{rem:action_H_A_f}, i), $\alpha(\Gamma_i)\cong \Gamma(L^{h_i})$ for all $i$. It is stated in \cite{Br1}, p. 115, that $\Gamma(L^{h_i})/D^+$ is Riemannian manifold, which implies that $X_K$ is a complex manifold. Applied to each component on both sides of \ref{eq:theta_integrals_relation}, Prop. 4.6 in \cite{BF1} and Prop. 4.17 in \cite{Ku2} yields the claimed assertion. 
\end{proof}

The Siegel-Weil formula \ref{prop:siegel_weil_vec_val} combined with Proposition \ref{prop:theta_integrals_relation} allows us to express the $L^2$-norm of $\Lambda(f)$ as a Rankin-Selberg type integral. The doubling method for our setup then leads to  a formula for $\|\Lambda(f)\|_2^2$  in terms of a special value of the standard zeta function associated to a common Hecke eigenform $f$. 
For the next theorem we use the following notation
\[
K(A_p,m,l) = \prod_{p\mid |A|}\left(\left(\frac{e(\sig(A_p)/8)}{|A_p|^{1/2}} -1\right) + L_p\left(\frac{m}{2}-l+2,\chi_{A_p^\perp}\right)\right)^{-1}
\]
(see Section \ref{sec:standard_l_func}).
Under the assumption  $\frac{m}{2}> l-1$ by taking into account that $\chi_{A_p^\perp}$ is a quadratic Dirichlet character, we find that  
\[
\left(\frac{e(\sig(A_p)/8)}{|A_p|^{1/2}} -1\right) + L_p\left(\frac{m}{2}-l+2,\chi_{A_p^\perp}\right) \not=0
\]
for each $p$ in the above product. 

\begin{theorem}\label{thm:l2_norm_doubling_int}
  Let $m, q, l$ be as before with $m > \max(6,2l-2,3+r_0)$ and $s_0 = (m-3)/2$. Furthermore, we assume that the conditions of Proposition \ref{prop:theta_integrals_relation} are satisfied and that $q+l$, $\kappa = \frac{m}{2}+l$ are even and $A$ is an anisotropic quadratic module. If  $f\in S_{\kappa,A}$ is a common eigenform of all Hecke operators $T\kzxz{d^2}{0}{0}{1}$, we have
  \begin{equation}\label{eq:l2_norm_special_values}
    \begin{split}
       \frac{1}{\vol(X_K,\mu)}\frac{\|\Lambda(f)\|_2^2}{\|f\|_2^2} &= C(s_0)K(\kappa,-l/2)L\left(\frac{m}{2}-l+2,\chi_A\right)^{-1}\times \\
      &\times\prod_{p\mid |A|}K(A_p,m,l)L(-\frac{m}{4}-\frac{3l}{2}+3,f),
      \end{split}
    \end{equation}
  where
  \begin{equation}\label{eq:konstant_archimedian_place}
  K(\kappa,s) = \frac{e(\sig(A)/8)}{|A|^{1/2}}(-1)^{s+\frac{\kappa}{2}}2^{2-2s-\kappa+1}\frac{\Gamma(\kappa+ s-1)}{\Gamma(\kappa + s)}. 
  \end{equation}
\end{theorem}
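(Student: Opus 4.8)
The strategy is to chain together the results already established in the excerpt. Starting from Proposition~\ref{prop:l2_norm}, we have
\[
\|\Lambda(f)\|_2^2 = \left(f(\tau_1)\otimes\overline{f(\tau_2)}, I(\tau_1,-\overline{\tau_2},\xi)\right),
\]
and by Proposition~\ref{prop:theta_integrals_relation} the theta integral $I$ (up to the factor $\vol(X_K,\mu)$) equals $(v_1v_2)^{-\kappa/2}$ times the adelic theta integral appearing in the Siegel--Weil formula. Applying the vector-valued Siegel--Weil formula (Corollary~\ref{prop:siegel_weil_vec_val}) then replaces the theta integral by the vector-valued Eisenstein series $E_L(\iota(g_{\tau_1},g_{\tau_2}),s_0,\Xi)$, where $\Xi = \lambda(\xi)$. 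By Proposition~\ref{prop:standard_section_xi}, $\Xi(s) = C(s)\Phi_\infty^{m/2+l}(s)$, so at $s_0 = (m-3)/2$ the Eisenstein series is, up to the scalar $C(s_0)$, the standard section Eisenstein series of weight $\kappa = m/2+l$. This brings us into the realm of the doubling method: the genus-two Eisenstein series on $G'(2)$ restricted via $\iota$ to $G'(1)\times G'(1)$ and paired against $f(\tau_1)\otimes\overline{f(\tau_2)}$.

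\textbf{Carrying out the doubling step.} Next I would invoke the vector-valued doubling formula — this is precisely the content of \cite{St2} (referenced in the introduction as the vector-valued analogue of \cite{Bo}) — which computes the Petersson pairing of $f\otimes\overline{f}$ against the restricted Eisenstein series in terms of the zeta function $Z(s,f)$ (or equivalently $\calZ(s,f)$) attached to $f$. Concretely, the Rankin--Selberg integral identity from \cite{St2}, Thm.~6.4, quoted in Section~\ref{sec:standard_l_func}, gives
\[
\sum_{\lambda}\left(\int_{\Gamma\bs\H}\langle f(\tau)\otimes\frake_\lambda, E_{\kappa,0}^2(\kzxz{\tau}{0}{0}{-\overline{\zeta}},\overline{s})\rangle_2 \im(\tau)^\kappa d\mu(\tau)\right)\frake_\lambda = K(\kappa,s)Z(2s+\kappa,f)f(\zeta),
\]
and after a second integration against $\overline{f}$ one obtains $\|f\|_2^2$ times $K(\kappa,s)Z(2s+\kappa,f)$. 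The constant $K(\kappa,s)$ is exactly \eqref{eq:konstant_archimedian_place}, arising from the archimedean zeta integral of the weight-$\kappa$ section. One must then track the specialization: with $s_0 = (m-3)/2$ the relevant point for the zeta function works out, via $s_0 = m/2 - \rho_2 = m/2 - 3/2$, to land us at the argument where $Z(2s+\kappa,f)$, translated through the relations \eqref{eq:rel_zeta_standard_L_func} and \eqref{eq:global_general_zeta_func} between $Z$, $\calZ$ and $L(s,f)$, becomes the standard $L$-value $L(-\tfrac{m}{4}-\tfrac{3l}{2}+3,f)$. The passage from $\calZ$ to $L$ produces precisely the Dirichlet $L$-factor $L(\tfrac{m}{2}-l+2,\chi_A)^{-1}$ and the local correction factors $K(A_p,m,l)$ as defined just before the theorem statement (these come from the Euler factors at $p\mid|A|$ in \eqref{eq:global_general_zeta_func}, inverted). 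The finitely many local factors at $p\mid|A|$ are nonzero by the remark preceding the theorem (using $\tfrac{m}{2} > l-1$ and that $\chi_{A_p^\perp}$ is quadratic), so dividing by them is legitimate.

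\textbf{Assembling the constant and checking convergence.} Finally I would collect all the constants: $C(s_0)$ from Proposition~\ref{prop:standard_section_xi}, the archimedean constant $K(\kappa,-l/2)$ obtained by specializing $K(\kappa,s)$ from \eqref{eq:konstant_archimedian_place} at the value of $s$ forced by $s_0$ (one checks $2s+\kappa$ must equal the zeta argument, fixing $s=-l/2$), the Dirichlet $L$-factor $L(\tfrac{m}{2}-l+2,\chi_A)^{-1}$, the product $\prod_{p\mid|A|}K(A_p,m,l)$, and the standard $L$-value $L(-\tfrac{m}{4}-\tfrac{3l}{2}+3,f)$, yielding \eqref{eq:l2_norm_special_values}. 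One also verifies the hypotheses of all cited results are met: $m > r_0+3$ for Theorem~\ref{thm:siegel_weil} and Proposition~\ref{prop:theta_integrals_relation}; $m>2l-2$ (equivalently $\tfrac m2 > l-1$, indeed $\tfrac m2 > l+3$) so the local factors are invertible and so the zeta argument lies in the half-plane of convergence as used in the proof of Theorem~\ref{thm:L_func_non_zero}; $q+l$ and $\kappa$ even so that Proposition~\ref{prop:standard_section_xi} and the theta transformation law apply; and $m>6$ together with the genus-two setup so that the doubling integral \cite{St2} converges and the Eisenstein series is holomorphic at $s_0$.

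\textbf{Main obstacle.} The delicate part is not any single step but the bookkeeping of spectral parameters and normalizations: matching the point $s_0=(m-3)/2$ in the Siegel--Weil/Eisenstein picture with the argument of $Z(s,f)$ in the doubling identity, then transporting that through \eqref{eq:rel_zeta_standard_L_func} and \eqref{eq:global_general_zeta_func} to the claimed $L$-value $-\tfrac m4-\tfrac{3l}{2}+3$, while keeping the archimedean constant $K(\kappa,s)$ evaluated at the correct $s$, the volume factor $\vol(X_K,\mu)$, and the $(v_1v_2)^{-\kappa/2}$ normalization all aligned. A secondary subtlety is ensuring the vector-valued doubling formula of \cite{St2}, which is stated for the Siegel Eisenstein series $E_{\kappa,0}^2$ of $G'(2)$, genuinely matches the Eisenstein series $E_L(\cdot,s_0,\Xi)$ produced by the Siegel--Weil formula here — i.e. that the standard section $\Phi_\infty^{m/2+l}$ of Proposition~\ref{prop:standard_section_xi} corresponds under $\lambda$ to the classical weight-$\kappa$ vector-valued Eisenstein series — which is the same identification used in \cite{BF} in the unimodular case and carries over verbatim.
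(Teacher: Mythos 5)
Your proposal is correct and follows essentially the same route as the paper's proof: Propositions \ref{prop:l2_norm_theta_integral} and \ref{prop:theta_integrals_relation} plus the vector-valued Siegel--Weil formula reduce the $L^2$-norm to a pairing against the genus-two Eisenstein series, Proposition \ref{prop:standard_section_xi} identifies the section (the paper makes the adelic-to-classical translation at $s=-l/2$ precise via Lemma 3.14 of \cite{St1}), the doubling identity of \cite{St2}, Thm.~6.4 produces $K(\kappa,-l/2)Z(\frac m2,f)\|f\|_2^2$, and the relations \eqref{eq:global_general_zeta_func} and \eqref{eq:rel_zeta_standard_L_func} convert $Z(\frac m2,f)$ into the stated special value of $L(s,f)$.
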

\begin{proof}
  By the Propositions \ref{prop:l2_norm_theta_integral}, \ref{prop:theta_integrals_relation} and the Siegel-Weil formula in Corollary \ref{prop:siegel_weil_vec_val} we have
  \begin{equation}\label{eq:l2_norm_equation}
    \begin{split}
&    \frac{1}{\vol(X_K,\mu)}\|\Lambda(f)\|_2^2 = \left(f(\tau_1)\otimes \overline{f(\tau_2)}, \frac{1}{\vol(X_K,\mu)}I(\tau_1,-\tau_2,\xi)\right) \\
        &= \left(f(\tau_1)\otimes \overline{f(\tau_2)},(v_1v_2)^{-\kappa/2}\int_{O(V)(\Q)\bs O(V)(\A)}\vartheta_L(\iota(g_{\tau_1},g_{-\overline{\tau_2}}),h, \xi)dh\right) \\
        &= \left(f(\tau_1)\otimes \overline{f(\tau_2)},(v_1v_2)^{-\kappa/2}\sum_{(\lambda,\nu)\in A^2}E(\iota(g_{\tau_1},g_{-\overline{\tau_2}}),s_0,\Xi\otimes\Phi_{(\lambda,\nu)})\varphi_{(\lambda,\nu)}\right).
      \end{split}
    \end{equation}
  Bearing  Proposition \ref{prop:standard_section_xi} in mind, we see that
\[
(v_1v_2)^{-\kappa/2}\sum_{(\lambda,\nu)\in A^2}E(\iota(g_{\tau_1},g_{-\overline{\tau_2}}),s_0,\Xi\otimes\Phi_{(\lambda,\nu)})\varphi_{(\lambda,\nu)}
\]
is nothing else but the Eisenstein series of genus 2 defined in \cite{St1}, Def. 3.13. That being said, we may apply Lemma 3.14 of \cite{St1} and obtain for the right-hand side of \eqref{eq:l2_norm_equation}
  \begin{align*}
    C(s_0)\left(f(\tau_1)\otimes \overline{f(\tau_2)},E_{\kappa,0}^2(\kzxz{\tau_1}{0}{0}{-\overline{\tau_2}},-\frac{l}{2})\right).
  \end{align*}
  By means of \cite{St2}, Theorem 6.4, this becomes
  \begin{align*}
    C(s_0)K(\kappa,-l/2)Z(2(-\frac{l}{2})+\kappa, f)\int_{\Gamma\bs \H}\sum_{\lambda\in A}\overline{f_\lambda(\tau_2)}f_\lambda(\tau_2)\im(\tau_2)^{\kappa}d\mu(\tau_2).
  \end{align*}
  As $\frac{m}{2} > l-1$, it is a classical result that $L\left(\frac{m}{2}-l+2,\chi_A\right)\not=0$ and we may then express $Z(\frac{m}{2},f)$ in terms of $\calZ(-l+2,f)$ using the equation (3.24) of \cite{St3}.  Subsequently, employing (7.25) of \cite{St3}, yields
  \begin{equation}\label{eq:special_values_zeta_standard_L}
    \begin{split}
     Z(\frac{m}{2},f)&= \prod_{p\mid |A|}\left(\left(\frac{e(\sig(A_p)/8)}{|A_p|^{1/2}}-1\right)+L_p\left(\frac{m}{2}-l+2,\chi_{A_p^\perp}\right)\right)^{-1}\times \\
     & L\left(\frac{m}{2}-l+2,\chi_A\right)^{-1}L\left(-\frac{m}{4}-\frac{3}{2}l+3,f\right).
    \end{split}
    \end{equation}
\end{proof}

As a corollary we can deduce the injectivity of the lifting $\Lambda$.
\begin{corollary}\label{cor:injectivity_kudla_millson}
  Under the conditions of Theorems \ref{thm:l2_norm_doubling_int} the Kudla-Millson theta lift $\Lambda$ \eqref{eq:kudla_millson_theta_lift} is injective.
  \end{corollary}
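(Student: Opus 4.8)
The plan is to read injectivity off the $L^2$-norm formula of Theorem \ref{thm:l2_norm_doubling_int}. Since $\Lambda(f)$ is a smooth, square-integrable $q$-form on $X_K$ by Proposition \ref{prop:l2_norm_theta_integral}, the quantity $\|\Lambda(f)\|_2^2=\int_{X_K}\Lambda(f)\wedge\ast\overline{\Lambda(f)}$ is a non-negative real number that vanishes if and only if $\Lambda(f)=0$; so it suffices to prove $\|\Lambda(f)\|_2^2>0$ for every non-zero $f\in S_{\kappa,A}$. First I would do this for a common eigenform $f\neq 0$ of all the $T\kzxz{d^2}{0}{0}{1}$, where \eqref{eq:l2_norm_special_values} expresses $\|\Lambda(f)\|_2^2$ as $\|f\|_2^2$ times a product of explicit factors, and I would check that each factor is non-zero: $C(s_0)\neq 0$ at $s_0=(m-3)/2$ by Proposition \ref{prop:standard_section_xi} (this is exactly where $p>1$ is needed, which also rules out the degenerate case $p=1$, $q+l>1$ of Proposition \ref{prop:l2_norm} in which $\Lambda\equiv 0$); the Archimedean constant $K(\kappa,-l/2)$ in \eqref{eq:konstant_archimedian_place} is a non-zero scalar times $\Gamma(\kappa-\tfrac{l}{2}-1)/\Gamma(\kappa-\tfrac{l}{2})$, hence non-zero since $m>6$; $L\!\left(\tfrac{m}{2}-l+2,\chi_A\right)\neq 0$ because $\tfrac{m}{2}-l+2>1$ lies in the region of absolute convergence of the Dirichlet series; each $K(A_p,m,l)$ is non-zero by the paragraph preceding Theorem \ref{thm:l2_norm_doubling_int} (using that $\chi_{A_p^\perp}$ is quadratic and $\tfrac{m}{2}>l-1$); and finally $L\!\left(-\tfrac{m}{4}-\tfrac{3l}{2}+3,f\right)\neq 0$ by Theorem \ref{thm:L_func_non_zero}. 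Hence $\|\Lambda(f)\|_2^2=c_f\|f\|_2^2$ with $c_f>0$ (positivity being forced by the non-negativity of the left-hand side), so $\Lambda(f)\neq0$ for every non-zero Hecke eigenform.

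To pass to arbitrary cusp forms I would use that the operators $T\kzxz{d^2}{0}{0}{1}$ commute and are self-adjoint for the Petersson product \eqref{eq:petersson_scalar}, so that $S_{\kappa,A}$ has an orthogonal basis $\{f_i\}$ of common eigenforms (see \cite{St3}). Polarising the computation in the proof of Theorem \ref{thm:l2_norm_doubling_int} — replacing $f(\tau_1)\otimes\overline{f(\tau_2)}$ by $f_i(\tau_1)\otimes\overline{f_j(\tau_2)}$ and applying \cite{St2}, Theorem 6.4, in the variable $\tau_1$ for the eigenform $f_i$ — gives
\[
\int_{X_K}\Lambda(f_i)\wedge\ast\overline{\Lambda(f_j)}=\vol(X_K,\mu)\,C(s_0)\,K(\kappa,-\tfrac{l}{2})\,Z\!\left(\tfrac{m}{2},f_i\right)(f_i,f_j),
\]
which vanishes for $i\neq j$ since $(f_i,f_j)=0$, and is strictly positive for $i=j$ by the previous paragraph together with \eqref{eq:special_values_zeta_standard_L}. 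Thus the Hermitian form $(f,g)\mapsto\int_{X_K}\Lambda(f)\wedge\ast\overline{\Lambda(g)}$ is diagonalised by $\{f_i\}$ with strictly positive diagonal entries, so it is positive definite on all of $S_{\kappa,A}$, and therefore $\Lambda$ is injective.

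The bulk of the analytic content sits in Theorems \ref{thm:l2_norm_doubling_int} and \ref{thm:L_func_non_zero}, so the genuinely new point in the corollary is the reduction to eigenforms, i.e.\ the vanishing of the cross terms $\int_{X_K}\Lambda(f_i)\wedge\ast\overline{\Lambda(f_j)}$ for $i\neq j$; I expect this to be the main (if modest) obstacle, and it follows because the doubling integral of \cite{St2}, Theorem 6.4, returns $Z(\,\cdot\,,f_i)$ times the eigenform $f_i$ itself, so pairing against $\overline{f_j}$ produces the Petersson inner product $(f_i,f_j)$. A secondary bookkeeping point is to confirm that the standing numerology $m>\max(6,2l-2,3+r_0)$ together with $q+l$ and $\kappa$ even is compatible with the hypotheses of each auxiliary nonvanishing statement invoked above, in particular with the condition $\tfrac{m}{2}>l+3$ needed for Theorem \ref{thm:L_func_non_zero}.
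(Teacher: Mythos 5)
Your proposal is correct and follows essentially the same route as the paper: the paper's own proof is two sentences, reducing injectivity to the non-vanishing of $\|\Lambda(f)\|_2^2$ via \eqref{eq:l2_norm_special_values} and then invoking Theorem \ref{thm:L_func_non_zero} for the only factor whose non-vanishing is not already settled in the surrounding text. The one place you go beyond the paper is the polarization argument extending the conclusion from common Hecke eigenforms (the only case covered by Theorem \ref{thm:l2_norm_doubling_int}) to arbitrary cusp forms by showing the cross terms $\int_{X_K}\Lambda(f_i)\wedge\ast\overline{\Lambda(f_j)}$ vanish for $i\neq j$; the paper leaves this reduction implicit, and your treatment of it is a genuine and correct completion of the argument rather than a different approach.
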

\begin{proof}
  It suffices to prove that $\|\Lambda\|_2^2$ is non-zero. In view of \eqref{eq:l2_norm_special_values} and \eqref{eq:konstant_archimedian_place} we need to show that $L(-\frac{m}{4}-\frac{3l}{2}+3,f)$ is non-zero. But this is just the assertion of Theorem \ref{thm:L_func_non_zero}. 
  \end{proof}

\section{Surjectivity of the Borcherds lift}\label{sec:surj_borcherds}
In this section we pick up the question from the introduction whether a modular form for some orthogonal group with zeroes and poles located on Heegner divisors  can be realized as a Borcherds lift of weakly holomorphic modular forms. 
The most general results in this direction are given in \cite{Br2}. We focus here  on Theorem 1.4 in \cite{Br2}, which essentially only assumes that the level of the lattice $L$ is a prime number. In particular, it is not required that $L$ splits a lattice of the form $U\otimes U(N)$ over $\Z$. Our assertion is in the same vein. We do not impose any further restrictions on the lattice, but assume that the discriminant group $A=L'/L$ is anisotropic. Thus,
\[
A = \bigoplus_p A_p, 
\]
where each $p$-component $A_p$ is anisotropic (see \eqref{eq:anisotropic_finite_modules}).

Before stating our results, we briefly gather the necessary facts on modular forms on orthogonal groups and review in some detail the involved lifts in an adelic setting suited to our needs.  We follow loosely \cite{Br1}, \cite{Ku2}.  We adopt the notation from Section \ref{subsec:schwarz_forms} and \ref{sec:inj_kudla_millson}, but restrict ourselves to the Hermitian case  and assume that $l$ (the parameter of the Schwartz form $\varphi_{q,l}$) is zero. Accordingly,  $(V(\R),Q)$ is a quadratic space of type $(p,2)$ and $D$ is the Grassmannian of negative definite oriented subspaces $z\subset V(\R)$ of dimension 2. By $D^+$ we mean one of its two connected components. Let $G(\R)^+$ be the subgroup of $G(\R) = O(V)(\R)$ which preserves $D^+$ and $D^-$. It acts transitively on $D^+$.  Also, we define and write $X_K = H(\Q)\bs D\times H(\A_f)/K$ with the same meaning as in the section before. In particular, $K$ is an open compact subgroup of $H(\A_f)$ which preserves $L$ and acts trivially on $A$.
Furthermore, there are two weights involved in this section. On the one hand we reserve $\kappa$ for $\frac{m}{2} = 1 +\frac{p}{2}$, on the other hand we use $\ell = 2-\kappa = 1-\frac{p}{2}$. We stick with this notation throughout this section. \newline
To define an analogue of the upper half plane in the orthogonal setting, we give $D$ a complex structure. For this purpose, we consider the complexified space $V(\C) = V\otimes_\Q \C$  of $V$ and extend $(\cdot,\cdot)$ to a $\C$-bilinear form. Then
\begin{equation}\label{eq:cal_K}
  \calK = \left\{[z]\in P(V(\C))\; |\; (z,z)=0 \text{ and } (z,\overline{z})<0\right\}
  \end{equation}
is a complex manifold with two connected components, which are exchanged by $z\mapsto \overline{z}$. For $[z]\in \calK$ we utilize the notation  $z = x+ iy$. In terms  of this decomposition one can show that $[z]\mapsto \R x + \R y$  is a bijection between $\calK$ and $D$ inducing a complex structure on $D$.

We write $\calK^+$ for the component in $\calK$, which corresponds to $D^+$. Further, denote with $\widetilde{\calK}^+$ the preimage in $V(\C)$ of $\calK^+$ under the natural projection into the projective space $P(V(\C))$. The following definition is taken from \cite{Eh}, Def. 1.5.21.
\begin{definition}\label{def:orth_mod_form_D_A_f}
A function $F: \widetilde{\calK}^+\times H(\A_f)\rightarrow \C$ is called meromorphic modular form of weight $r\in \Z$, level $K\subset H(\A_f)$ and unitary character $\chi$ of finite order for $H(\Q)$ if
\begin{enumerate}
\item[i)]
  $z\mapsto f(z,h)$ is meromorphic for any fixed $h\in H(\A_f)$,
\item[ii)]
  $f(z,hk) = f(z,h)$ for all $k\in K$,
\item[iii)]
  $f(tz,h) = t^{-r}f(z,h)$ for all $t\in \C\bs \{0\}$,
\item[iv)]
  $f(\gamma z, \gamma h) = \chi(\gamma)f(z,h)$ for all $\gamma\in H(\Q)$ ,
\item[v)]
  and $f$ is meromorphic at the boundary components of $\widetilde{\calK}^+$.
\end{enumerate}
\end{definition}

\begin{remark}\label{rem:rel_adelic_classic_orth_modforms}
  Let $L$ be a lattice, which is additionally assumed to be a maximal lattice (that is there is no even lattice $M$ with $L\subsetneq M\subset V$) and set
  \begin{equation}\label{def:open_compact_GSpin}
  K= H(\A_f)\cap C^+(\hat{L})^\times,  
  \end{equation}
  where $L=L\otimes \hat{Z}$ as in Section \ref{subsec:siegel_weil} and $C^+(\hat{L})^\times$ is the unit group of the even Clifford algebra of the lattice $\hat{L}$.  With this choice Andreatta et. al showed that the $\C$-points of the GSpin-Shimura variety \eqref{eq:shimura_variety} is connected if $p\ge 2$ or the order of $A$ is square-free (see \cite{AGHM}, Prop. 4.1.1). In this case and in view of  \eqref{eq:shimura_variety_decomp}  we consequently have  
  \[
  X_K(\C) = \Gamma_K\bs D^+.
  \]
  It turns out that the group $\Gamma_K=H(\Q)^+\cap K$ can be specified explicitly: The map $\alpha$ restricted to $K$ defines a homomorphism $K\rightarrow \SO(\hat{L})$ whose exact image is the subgroup of elements acting trivially on the discriminant group $L'/L\cong \hat{L}'/\hat{L}$. Therefore $\alpha(\Gamma_K)$ can be identified with the discriminant kernel $\Gamma(L)$ as defined in \cite{Br1} or \cite{Br2}.
We may conclude that in this setting for  any meromorphic modular form $F$ in the sense of Definition \ref{def:orth_mod_form_D_A_f} the function $F(\cdot, 1)$ behaves like a meromorphic modular form of the same weight for the orthogonal group $\Gamma(L)$.
Finally, it is worth mentioning that owing to its definition, $K$ satisfies \eqref{eq:center_K}.
\end{remark}

Examples of modular forms of Definition \ref{def:orth_mod_form_D_A_f} can be constructed by means of the celebrated Borcherds lift. In its original form (\cite{B}, Theorem 13.3), it takes weakly holomorphic modular forms $f\in M_{\ell,A}^!$ of weight $\ell$  to meromorphic modular forms on orthogonal modular groups. Bruinier (\cite{Br1}) extended the lift to harmonic weak Maass forms $f\in H_{\ell,A}^+$  as a supply of inputs. Here we recall in line with Def. \ref{def:kudla_millson_theta_lift} the regularized theta  lift on $D\times H(\A)$. 
Its definition is  based on an integral of the form
\begin{equation}\label{eq:regularized_theta_int}
  \Phi_L(f)(z,h) = \int^{\reg}_{\Gamma\bs \H}\langle f(\tau),\Theta_A(\tau,(z,h),\varphi_\infty^{p,2})\rangle d\mu(\tau),
\end{equation}
where  $\Theta_A$ is defined in \eqref{eq:theta_schwartz_form} with $\varphi_{q,l}$ replaced with the Gaussian $\varphi_\infty^{p,2}$ (cf. \eqref{eq:gaussian}). 
Since $f$ grows exponentially for $\im(\tau)\rightarrow \infty$, the corresponding integral over $\Gamma\bs \H$ has to be regularized according to Borcherds \cite{B}, p. 514 ff. The regularized integral is denoted with $\int_{\Gamma\bs \H}^{\reg}$.
\begin{remark}\label{rem:adelic_theta_lift}
  Let $f\in H^+_{\ell,A}$ (or an element of $M_{\ell,A}!$) and $\Phi_L(f)(z,h)$ the regularized theta lift \eqref{eq:regularized_theta_int}. 
  Howard observed in \cite{HMP} that this adelic theta lift can be expressed as a classical regularized theta lift. More precisely, in terms of the notation of Remark \ref{rem:action_H_A_f} we have
  \begin{equation}\label{eq:adelic_classical_theta_rel}
    \begin{split}
      \Phi_L(f)(z,h) &= \Phi_{L^h}(f^h)(z)\\
      & = \int_{\Gamma\bs \H}^{\reg}\langle f^h(\tau),\Theta_{A^h}(\tau,z,\varphi_\infty^{p,2})\rangle d\mu(\tau), 
    \end{split}
  \end{equation}
  which can be easily confirmed with the help of \eqref{eq:theta_schwartz_form}.
  For the same reasons, the same holds for the Kudla-Millson theta lift, i. e.
  \begin{equation}\label{eq:adelic_classic_kudla_millson_theta_rel}
    \begin{split}
    \Lambda(f)(z,h) &=\Lambda(f^h)(z)\\
    &= \int_{\Gamma\bs \H}\langle f^h(\tau),\Theta_{A^h}(\tau,z,\varphi_{q,l})\rangle d\mu(\tau),
    \end{split}
  \end{equation}
  where the underlying lattice for the lift on the right-hand side  is $L^h$ as well.

\end{remark}



It is a fundamental theorem of Borcherds (\cite{B}, Thm. 6.2, or \cite{Br2}, Thm. 2.12) that $\Phi_L$ is  a smooth function on $X_K$  apart from logarithmic singularities along a certain divisor on $X_K$. This divisor, denoted with $Z(f)$,  is determined by the principal part of the lifted form $f$ and is a linear combination of so-called {\it Heegner divisors}, which we will now briefly describe. Following \cite{Ku1}, let $x\in V(\Q)$ with $Q(x)>0$ and $V_x=x^\perp$ the orthogonal complement of $x$ in $V(\Q)$. We write $H_x$ for the stabilizer of $x$ in $H$. As is noted in \cite{Ku2}, we have $H_x\cong \GSpin(V_x)$. Further, 
\begin{equation}\label{eq:D_x}
  D_x=\{z\in D\; |\; (z,x) = 0\}
  \end{equation}
defines an analytic set of codimension one in $D$. We put $D_x^+= D_x \cap D^+$.
For $h\in H(\A_f)$ we define a divisor $Z(x,h,K)$ on $X_K$ by the image of the map
\begin{equation}\label{eq:map_divisor}
  \begin{split}
    H_x(\Q)\bs D_x \times H_x(\A_f)/\left(H(\A_f)\cap hKh^{-1}\right)&\longrightarrow H(\Q)\bs D\times H(\A_f)/K,\\
   & (z,h_1)\mapsto (z,h_1h). 
    \end{split}
\end{equation}
It can be shown that $Z(x,h,K)$ is rational over $\Q$. Let $\mu\in A$. In terms of  an $n\in\Q_{>0}$ and the Schwartz function $\varphi_\mu$ (see \eqref{eq:familiy}) we introduce a weighted sum $Z(n,\varphi_\mu,K)$ of these divisors. To that end, we consider the set
\begin{equation}\label{eq:omega_n}
  \Omega_n = \left\{x\in V\;|\; Q(x)=n\right\}.
  \end{equation}
According to \cite{Ku2}, we may write for a fixed $x_0\in \Omega_n$ 
\begin{equation}
  \supp(\varphi_\mu)\cap \Omega_n(\A_f) = \bigsqcup_r K x_r^{-1}\cdot x_0
\end{equation}
with a finite set of elements $x_r\in H(\A_f)$.  We then define $Z(n,\varphi_\mu,K)$ by
\begin{equation}\label{eq:weighted_divisor}
  Z(n,\varphi_\mu,K)= \sum_{r}\varphi_\mu(x_r^{-1}\cdot x_0)Z(x_0,x_r,K)
\end{equation}
and $Z(n,\varphi_\mu,K)=0$ if $\Omega_n(\Q)$ is empty. Finally,
$Z(f)$ is given by
\begin{equation}\label{eq:divisor_weak_maasform}
  Z(f) = \frac{1}{2}\sum_{\mu\in A}\sum_{n>0}c^+(-n,\mu)Z(n,\varphi_\mu,K).
\end{equation}
Here $c^+(-n,h)$ are the Fourier coefficients of the principal part of $f$ (see \eqref{eq:principal_part_weak_maass}). 

With respect to \eqref{eq:shimura_variety_decomp} the divisor $Z(n,\varphi_\mu,K)$ can be decomposed into a finite sum of divisors $Z_i(n,\varphi_\mu,K)$, where $Z_i(n,\varphi_\mu,K)$ is a divisor on  the  connected component $\Gamma_i\bs D^+$:
\begin{equation}\label{eq:heegner_div_decomp}
  \begin{split}
   Z(n,\varphi_\mu,K) = \sum_i Z_i(n,\varphi_\mu,K) \text{ with } 
  Z_i(n,\varphi_\mu,K) = \sum_{x\in \Gamma_i\bs \Omega_n(\Q)}\varphi_\mu(h_i^{-1}x)\pr_i(D_x^+),
  \end{split}
\end{equation}
where $\pr_i$ maps $z\in D_x^+$ to $\Gamma_iz$ in $\Gamma_i\bs D^+$. Note that $Z_i(n,\varphi_\mu,K)$ can be written in a more explicit way by
\begin{equation}\label{eq:div_explicit}
 Z_i(n,\varphi_\mu,K) = \sum_{\substack{\lambda_i\in \Gamma_i\bs h_i\mu+L^{h_i}\\Q(\lambda_i) = n}}\pr_i(D_{\lambda_i}^+).
\end{equation}
Also, as $K$ leaves $L$ invariant and stabilizes $A$, the same holds for $\Gamma_i$ regarding $L^{h_i}$.  We use the notation $\mu_i = h_i\mu$.  Thus, the sum in \eqref{eq:div_explicit} is invariant under the action of $\Gamma_i$ and consequently
\begin{equation}\label{eq:div_components}
  \begin{split}
    Z_i(n,\varphi_\mu,K) &= \pr_i\left(\sum_{\substack{\lambda_i\in h_i\mu+L^{h_i}\\Q(\lambda_i) = n}}D_{\lambda_i}^+\right) \\
    &= \sum_{\substack{\lambda\in \mu_i +L^{h_i}\\Q(\lambda) = n}}D_{\lambda}^+. 
\end{split}
  \end{equation}
Note that the last expression in \eqref{eq:div_components} is just the Heegner divisor $H(\mu_i,n)$ as defined in \cite{Br1}.  
For simplicity, we write $Z(n,\mu)$ and $Z_i(n,\mu_i)$ instead of $Z(n,\varphi_\mu,K)$ and $Z(n,\varphi_\mu,K)$, respectively.





Now Theorem 13.3 in \cite{B}  can be transferred to our adelic setting (see \cite{Ku2}, Thm 1.3 or \cite{Eh}, Thm. 1.8.1), which reads as follows:
\begin{theorem}\label{thm:Borcherds_thm13_3}
  Let $f\in M^!_{\ell,L}$ with $c(\mu,n)\in \Z$ for all $m< 0$ and $c(\mu,n)\in \Q$ for all $n\in \Z+\Q(\mu)$. Then there is a function $\Psi_L(f)(z,h)$ on $D\times H(\A_f)$ such that
  \begin{enumerate}
  \item[i)]
    $\Psi_L(f)(z,h)$ is a meromorphic modular form of weight $c(0,0)/2$, level $K$ and some unitary character of finite order for $H(\Q)$.
  \item[ii)]
    the divisor of $\Psi_L(f)(z,h)$ on $X_K$ is given by $Z(f)$.
  \item[iii)]
    $\Psi_L$ is related to $\Phi_L$ by the equation
\begin{equation}\label{eq:rel_orth_modular_forms_theta_int}
  -4\log|\Psi_L(f)(z,h)|=  \Phi_L(f)(z,h) + c(0,0)(2\log|\im(z)|+ \log(2\pi) + \Gamma'(1))
\end{equation}
Equivalently we may write  
\begin{equation}
   -2\log\|\Psi_L(f)(z,h)\|^2_{\text{Pet},c(0,0)/2} = \Phi_L(f)(z,h) + c(0,0)(\log(2\pi) + \Gamma'(1)),
\end{equation}
where $\displaystyle\|\Psi_L(f)(z,h)\|_{\text{Pet},r} = |\Psi_L(f)(z,r)||\im(z)|^r$ is the Petersson norm weight $r$ with $\displaystyle |\im(z)|=|(\im(z), \im(z))|^{1/2}$. 
  \end{enumerate}
\end{theorem}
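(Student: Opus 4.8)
The plan is to deduce this adelic statement from the classical Borcherds product theorem (\cite{B}, Thm.\ 13.3, equivalently \cite{Br2}, Thm.\ 2.12) by treating one connected component of $X_K$ at a time and then reassembling. Recall from \eqref{eq:strong_approximation} and \eqref{eq:shimura_variety_decomp} that $H(\A_f) = \bigsqcup_i H(\Q)^+ h_i K$ and $X_K \cong \bigsqcup_i \Gamma_i \bs D^+$ with $\Gamma_i = H(\Q)^+ \cap h_i K h_i^{-1}$, and that by Remark \ref{rem:action_H_A_f}, i), one has $\alpha(\Gamma_i) = \Gamma(L^{h_i})$ and $f^{h_i} \in M^!_{\ell, L^{h_i}}$, the Fourier coefficients of $f^{h_i}$ being $c(h_i\mu, n)$. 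In particular the integrality and rationality hypotheses on the coefficients of $f$ pass verbatim to each $f^{h_i}$, so the classical theorem applies to every pair $(L^{h_i}, f^{h_i})$.

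First I would invoke the classical Theorem 13.3 of \cite{B} for each $i$: this yields a meromorphic modular form $\Psi_{L^{h_i}}(f^{h_i})$ on $\widetilde{\calK}^+$ of weight $c(0,0)/2$ for the discriminant kernel $\Gamma(L^{h_i})$ with a unitary character $\chi_i$ of finite order, whose divisor on $\Gamma_i \bs D^+$ is $\tfrac12\sum_{\mu\in A}\sum_{n>0} c^+(-n,\mu)\,Z_i(n,\mu_i)$ in the notation of \eqref{eq:heegner_div_decomp}--\eqref{eq:div_components}, and which satisfies the classical analogue of the identity in iii) with the regularized theta integral $\Phi_{L^{h_i}}(f^{h_i})$. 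Next I would glue these functions: for the finitely many representatives $h_i$ put $\Psi_L(f)(z,h_i) := \Psi_{L^{h_i}}(f^{h_i})(z)$ and extend to all of $H(\A_f)$ by imposing conditions ii) and iv) of Definition \ref{def:orth_mod_form_D_A_f}; equivalently, for $h = \gamma h_i k$ with $\gamma \in H(\Q)^+$, $k\in K$ one sets
\begin{equation*}
  \Psi_L(f)(z,h) := \chi(\gamma)\,\Psi_{L^{h_i}}(f^{h_i})(\gamma^{-1}z).
\end{equation*}
Then one checks that this is independent of the chosen decomposition of $h$ (using $H(\Q)^+\cap h_iKh_i^{-1} = \Gamma_i$ together with the $\Gamma_i$-transformation law of $\Psi_{L^{h_i}}(f^{h_i})$) and that the resulting function on $\widetilde{\calK}^+\times H(\A_f)$ satisfies i)--v) of Definition \ref{def:orth_mod_form_D_A_f} with $r = c(0,0)/2$.

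With $\Psi_L(f)$ in hand, statement i) is built in by construction; statement ii), that the divisor of $\Psi_L(f)$ on $X_K$ equals $Z(f)$, follows by summing the component-wise divisors from the previous step against the decompositions \eqref{eq:divisor_weak_maasform} and \eqref{eq:heegner_div_decomp}; and statement iii) follows component by component from the classical relation of \cite{B}, Thm.\ 13.3, combined with the identity $\Phi_L(f)(z,h_i) = \Phi_{L^{h_i}}(f^{h_i})(z)$ of \eqref{eq:adelic_classical_theta_rel} and the definition of the Petersson norm in Theorem \ref{thm:Borcherds_thm13_3}, iii). The hard part will be the gluing step: one must verify that the locally defined characters $\chi_i$ are the restrictions of one unitary character $\chi$ of finite order on $H(\Q)$, and that the locally defined sections are genuinely compatible under the $H(\Q)$-action permuting the components of $X_K$. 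The clean way to see this is to regard $\Psi_L(f)$ as a meromorphic section, with prescribed divisor $Z(f)$, of the automorphic line bundle of weight $c(0,0)/2$ on the $\GSpin$ Shimura variety $X_K$ (passing to a finite cover if $c(0,0)$ is odd), whose restriction to the $i$-th component recovers $\Psi_{L^{h_i}}(f^{h_i})$; the remaining verifications are then a routine transcription of Borcherds' original argument, as carried out in \cite{Ku2}, Thm.\ 1.3, and \cite{Eh}, Thm.\ 1.8.1.
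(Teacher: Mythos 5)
The paper gives no proof of this theorem at all: it simply states that Borcherds' Theorem 13.3 ``can be transferred to our adelic setting'' and cites \cite{Ku2}, Thm.\ 1.3 and \cite{Eh}, Thm.\ 1.8.1. Your component-wise reduction to the classical Borcherds theorem on each $\Gamma_i\backslash D^+$ via $(L^{h_i},f^{h_i})$, followed by gluing, is exactly the strategy of those sources and of the paper's own arguments for the neighbouring transfers (Remark \ref{rem:adelic_theta_lift} and Theorems \ref{thm:rel_theta_lifts}, \ref{thm:weak_converse}), and you correctly isolate the only nontrivial point, namely the compatibility of the local characters $\chi_i$ under the $H(\Q)$-action permuting components; so this is essentially the same approach and is correct.
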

Note that we get for $h=1$ the original regularized theta lift $\Phi_L(f)(z,1)$ and the original Borcherds lift $\Psi_L(f)(z,1)$ back. Also, in view of Remark \ref{rem:adelic_theta_lift}, we have $\Psi_L(f)(z,h) = \Psi_{L^h}(f^h)(z)$, i. e. the classical Borcherds lift of $f^h$ for the orthogonal group attached to the lattice $L^h$. 

We now address the main question of this paper. 
We proceed as in \cite{BF}, Sect. 1.1. The proof given therein is essentially based on \cite{BF1}, Thm. 6.1, and \cite{Br1}, Thm. 4.23. We present for both theorems a version in our adelic setup fitting to the statement of the Kudla-Millson theta lift and its injectivity in Section \ref{sec:inj_kudla_millson}. As both theorems rely on the fact that the underlying Hermitian space is a Riemannian manifold, we adopt this assumption (although the theorem below should also hold for the more general situation where $X_K$ is a complex orbifold).    

\begin{theorem}\label{thm:rel_theta_lifts}
  Let $V$ be of type $(p,2)$ and $f\in H_{\ell,A^-}^+$ with the Fourier expansion as in \eqref{eq:fourier_exp_xi_k} and suppose that $\alpha(\Gamma_K)=\Gamma(L)$. Then for all $(z,h)\in (D\bs Z(f))\times H(\A_f)$ the identity
  \begin{equation}\label{eq:rel_theta_lifts}
    dd^c\Phi_L(f)(z,h) = \Lambda(\xi_\ell(f))(z,h) + c^+(0,0)\Omega
  \end{equation}
holds,  where $\displaystyle \Omega$ is the K\"ahler form on $D$ as specified in \cite{BF1}.
\end{theorem}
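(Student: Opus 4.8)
The plan is to reduce the asserted adelic identity to the classical vector-valued relation between the regularized Borcherds lift and the Kudla--Millson lift established in \cite{BF1}, Theorem 6.1, and then to transfer that relation component by component on $X_K$. First I would fix $h\in H(\A_f)$ and invoke Remark \ref{rem:adelic_theta_lift}: by \eqref{eq:adelic_classical_theta_rel} one has $\Phi_L(f)(z,h) = \Phi_{L^h}(f^h)(z)$, where $f^h$ is the transform of $f$ under the isomorphism $\C[A^-]\cong\C[(A^h)^-]$ of Remark \ref{rem:action_H_A_f}, and by \eqref{eq:adelic_classic_kudla_millson_theta_rel} one has $\Lambda(\xi_\ell(f))(z,h) = \Lambda(\xi_\ell(f^h))(z)$ once we know that $\xi_\ell$ commutes with $f\mapsto f^h$. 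This last point I would check directly: $\xi_\ell$ acts through the Maass lowering operator in the variable $\tau$ and complex conjugation of the Fourier coefficients, whereas the $h$-action merely permutes the standard basis of the group ring, so $(\xi_\ell f)^h = \xi_\ell(f^h)\in S_{\kappa,A^h}$. Since $dd^c$ and the K\"ahler form $\Omega$ on $D$ depend only on $V(\R)$, not on $h$ or on the lattice, and since the constant term $c^+(0,0)$ is unchanged by the $h$-action (which fixes the component $\mu=0$ and preserves Fourier coefficients), the identity \eqref{eq:rel_theta_lifts} at $(z,h)$ becomes equivalent to the classical identity
\[
  dd^c\,\Phi_{L^h}(f^h)(z) \;=\; \Lambda(\xi_\ell(f^h))(z) + c^+(0,0)\,\Omega
\]
on $D^+\setminus Z(f^h)$, for the even lattice $L^h$ of type $(p,2)$ and the harmonic weak Maass form $f^h$ of weight $\ell=1-\frac{p}{2}$ and type $\rho_{(A^h)^-}$.

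Next I would identify this classical identity with \cite{BF1}, Theorem 6.1, which asserts precisely that, away from the Heegner divisor cut out by the principal part of the input, $dd^c$ of the regularized Borcherds lift equals the Kudla--Millson lift of $\xi_\ell$ of the input, plus the constant-term contribution of the K\"ahler form, for an arbitrary even lattice of type $(p,2)$. To see that the singular loci match I would use the decomposition $X_K\cong\bigsqcup_i\Gamma_i\backslash D^+$ together with $\alpha(\Gamma_i)=\Gamma(L^{h_i})$ (Remark \ref{rem:action_H_A_f}, using the hypothesis $\alpha(\Gamma_K)=\Gamma(L)$) and the formulas \eqref{eq:heegner_div_decomp}--\eqref{eq:div_components}: the restriction of $Z(f)$ to the $i$-th component is exactly the classical Heegner divisor attached to $f^{h_i}$, which is the locus of logarithmic singularities of $\Phi_{L^{h_i}}(f^{h_i})$ by \cite{Br1}. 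Applying the classical statement to each $h=h_i$, and then to an arbitrary $h$ via the decomposition \eqref{eq:strong_approximation} and the evident equivariance of both sides of \eqref{eq:rel_theta_lifts}, would finish the proof.

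The main obstacle, and really the only substantive point, is the bookkeeping in this adelic-to-classical passage: one has to track the $h$-twist consistently through the theta kernel \eqref{eq:theta_schwartz_form}, through the regularization of the $\tau$-integral, and through $\xi_\ell$, and then verify that the divisor of singularities of $\Phi_L(f)$ on $X_K$ restricts correctly to the classical Heegner divisors on the connected components. Once the dictionary of Remark \ref{rem:adelic_theta_lift} is in place there is nothing new to prove, since the differential-geometric computation behind \cite{BF1}, Theorem 6.1 --- the $dd^c$-identity relating the Siegel Gaussian and the Kudla--Millson Schwartz form at the level of Schwartz forms --- is already available in the vector-valued setting and needs no hypothesis on $A$ beyond $V$ being of type $(p,2)$.
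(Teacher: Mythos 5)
Your proposal follows essentially the same route as the paper's own proof: both reduce to \cite{BF1}, Theorem 6.1, via the dictionary $\Phi_L(f)(z,h)=\Phi_{L^h}(f^h)(z)$ of Remark \ref{rem:adelic_theta_lift}, apply it on each connected component $\Gamma_i\backslash D^+$ of $X_K$ using $\alpha(\Gamma_i)=\Gamma(L^{h_i})$ and the divisor decomposition $Z(f)=\sum_i Z(f^{h_i})$, and then lift the componentwise identities back to $X_K$ by invariance of both sides. Your explicit verification that $\xi_\ell$ commutes with the $h$-twist is a point the paper leaves implicit but uses, so it is a welcome (if minor) addition rather than a different argument.
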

\begin{proof}
  Let $\displaystyle X_K \cong \bigsqcup_i \Gamma_{i}\bs D^+$
  with $\Gamma_i = H(\Q)^+\cap h_iKh_i^{-1} $ and $h_i\in H(\A_f)$ (cf. \eqref{eq:shimura_variety_decomp}).  By virtue of Remark \ref{rem:adelic_theta_lift} for each $h_i$ we have that  $f^{h_i}\in H_{\ell, (A^{h_i})^{-}}$. According to \cite{B}, 13.3 the divisor of $\Psi_{L^{h_i}}(f^{h_i})$ is given by
  \[
  Z(f^{h_i}) = \frac{1}{2}\sum_{\mu\in A}\sum_{n>0}c^+(-n,\mu)Z_i(n,\varphi_{\mu_i},K)
  \]
  with $Z_i(n,\varphi_{\mu_i},K)$ being defined in \eqref{eq:div_components}. 
  Theorem 6.1 in \cite{BF1} combined with Remark \ref{rem:adelic_theta_lift} now yields
  \begin{equation}\label{eq:rel_theta_lifts_components}
  dd^c\Phi_L(f^{h_i})(z) = \Lambda(\xi_\ell(f^{h_i}))(z) + c^+(0,0)\Omega
  \end{equation}
  for all $z\in D\bs Z_i(f^{h_i})$. 
  Now the function $f$ corresponds via \eqref{eq:differential_forms_x_k} to the collection of functions $(f(\cdot,h_i))_i$. As $\Phi_L(f)$ and $\Lambda(\xi_\ell(f))$ descend to differential forms on $X_K$, they are invariant under the left-action of $H(\Q)$ and the right-action of $K$. Therefore, the equations \eqref{eq:rel_theta_lifts_components} lift to the corresponding equation \eqref{eq:rel_theta_lifts} on $X_K$. This equation holds for all $(z,h)\in X_K\bs \sum_i Z(f^{h_i})$, where
  $\sum_i Z(f^{h_i})$ is nothing else than the divisor $Z(f)$ in \eqref{eq:divisor_weak_maasform} implying the claimed result. 
  \end{proof}

  The following theorem is a generalisation of  Theorem 4.23 in \cite{Br1} to meromorphic modular forms on $D\times H(\A_f)$.  
\begin{theorem}\label{thm:weak_converse}
  Assume that $\alpha(\Gamma_K)$ is the discriminant kernel $\Gamma(L)$. Let  $F:D\times H(\A_f)\rightarrow \C$ be a meromorphic modular form of weight $r$, character $\chi$ and level $K\subset H(\A_f)$ with respect to $H(\Q)$ whose divisor is of the form
  \begin{equation}\label{eq:divisor_of_F}
    \Div(F)= \frac{1}{2}\sum_{\mu\in L'/L}\sum_{n>0}c^+(-n,\mu)Z(n,\mu).
  \end{equation}
  Then there exists a weak Maass form $f\in H_{\ell, L^-}^+$ with principal part $\displaystyle \sum_{\mu\in L'/L}\sum_{n>0}c^+(-n,\mu)e(n\tau)\frake_\mu$ such that
  \begin{equation}\label{eq:converse_weak_maass_form}
    \Phi_L(f)(z,h_i) = -2\log\|F(z,h_i)\|_{\text{Pet},\frac{r}{2}} + c_i,
  \end{equation}
  where $(h_i)_i$ with $h_i\in \H(\A_f)$ is a set of coset representatives of the double coset space in \eqref{eq:shimura_variety_decomp} and $(c_i)_i$ is a set of constants. 
\end{theorem}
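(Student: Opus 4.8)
The plan is to reduce the assertion to Bruinier's classical converse theorem (\cite{Br1}, Theorem 4.23), applied separately on each connected component of $X_K$, and then to reassemble the resulting harmonic weak Maass forms into a single form on the adelic side.

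First I would work along the decomposition \eqref{eq:shimura_variety_decomp}, $X_K\cong\bigsqcup_i\Gamma_i\bs D^+$ with $\Gamma_i=H(\Q)^+\cap h_iKh_i^{-1}$ and $h_1=1$. Since $\alpha(\Gamma_K)=\Gamma(L)$ by hypothesis, Remark \ref{rem:action_H_A_f}, i), gives $\alpha(\Gamma_i)=\Gamma(L^{h_i})$ for every $i$; and from properties ii) and iv) of Definition \ref{def:orth_mod_form_D_A_f} one checks that for fixed $h_i$ the function $F(\cdot,h_i)$ behaves like a classical meromorphic modular form of weight $r$ and character $\chi$ for $\Gamma(L^{h_i})$ on $D^+$ (the kernel $\G_m$ of $\alpha$ acts trivially on $D$, and property iii) absorbs the scaling). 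Combining \eqref{eq:divisor_of_F} with the decomposition \eqref{eq:heegner_div_decomp} and the explicit shape \eqref{eq:div_components} of its pieces, the divisor of $F(\cdot,h_i)$ on $\Gamma(L^{h_i})\bs D^+$ is the combination $\frac{1}{2}\sum_\mu\sum_{n>0}c^+(-n,\mu)H(\mu_i,n)$ of the Heegner divisors of $L^{h_i}$, with $\mu_i=h_i\mu$.

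Next I would invoke \cite{Br1}, Theorem 4.23, for each $F(\cdot,h_i)$: there is a weak Maass form $g^{(i)}\in H_{\ell,(L^{h_i})^-}^+$ with principal part $\sum_\mu\sum_{n>0}c^+(-n,\mu)e(n\tau)\frake_{\mu_i}$ and a constant $c_i$ such that $\Phi_{L^{h_i}}(g^{(i)})(z)=-2\log\|F(z,h_i)\|_{\text{Pet},r/2}+c_i$. It remains to produce one $f\in H_{\ell,L^-}^+$ with $f^{h_i}=g^{(i)}$ for every $i$; granting this, the identity $\Phi_L(f)(z,h_i)=\Phi_{L^{h_i}}(f^{h_i})(z)$ of Remark \ref{rem:adelic_theta_lift} yields \eqref{eq:converse_weak_maass_form}, the principal part of $f=g^{(1)}$ is $\sum_\mu\sum_{n>0}c^+(-n,\mu)e(n\tau)\frake_\mu$ as required (since $h_1=1$), and $\sum_iZ(f^{h_i})$ reassembles to $Z(f)$, which equals $\Div(F)$ by \eqref{eq:divisor_weak_maasform} and \eqref{eq:divisor_of_F}. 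I would therefore set $f:=g^{(1)}\in H_{\ell,L^-}^+$. By Remark \ref{rem:action_H_A_f}, i), $f^{h_i}\in H_{\ell,(L^{h_i})^-}^+$ has principal part $\sum_\mu\sum_{n>0}c^+(-n,\mu)e(n\tau)\frake_{h_i\mu}$, which is exactly that of $g^{(i)}$, so $f^{h_i}-g^{(i)}$ is an element of $H_{\ell,(L^{h_i})^-}^+$ with vanishing principal part. Pairing with cusp forms, i.e., the Bruinier--Funke pairing behind the exact sequence \eqref{eq:exact_sequence} (see \cite{BF1}), then forces $\xi_\ell(f^{h_i}-g^{(i)})=0$; hence its non-holomorphic part vanishes and $f^{h_i}-g^{(i)}\in M_{\ell,(L^{h_i})^-}=0$, since $\ell=1-\frac{p}{2}<0$ in the range under consideration. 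Thus $f^{h_i}=g^{(i)}$ for all $i$, which completes the argument.

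I expect the gluing step to be the main obstacle: identifying the component-wise forms $g^{(i)}$ with the $H(\A_f)$-translates $f^{h_i}$ of a single form. It rests on (a) the compatibility of the isomorphism $H_{\ell,L^-}^+\cong H_{\ell,(L^{h_i})^-}^+$, $f\mapsto f^{h_i}$, of Remark \ref{rem:action_H_A_f} with the Heegner divisor decomposition \eqref{eq:heegner_div_decomp}--\eqref{eq:div_components}, and (b) the uniqueness of a harmonic weak Maass form of negative integral weight with prescribed principal part. Both are essentially bookkeeping once the adelic dictionary of Section \ref{sec:surj_borcherds} and Bruinier's classical theorem are at hand; the care required lies in keeping track of the factors of $\frac{1}{2}$, of the action of the $h_i$ on $A=L'/L$, and of the verification that the conditions of Definition \ref{def:orth_mod_form_D_A_f} genuinely descend $F(\cdot,h_i)$ to a $\Gamma(L^{h_i})$-modular form on $D^+$.
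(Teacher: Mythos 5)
Your proposal is correct and follows essentially the same route as the paper: decompose $X_K$ into the components $\Gamma_i\backslash D^+$, identify $F(\cdot,h_i)$ as a classical meromorphic modular form for $\Gamma(L^{h_i})$ with divisor $\frac{1}{2}\sum_{\mu}\sum_{n>0}c^+(-n,\mu)Z_i(n,\mu_i)$, and apply Bruinier's Theorem 4.23 componentwise. The only difference is that you make the gluing of the componentwise forms $g^{(i)}$ into a single $f$ explicit via the uniqueness of a harmonic weak Maass form of negative weight with prescribed principal part, whereas the paper leaves this step implicit by writing each $\Phi_L(\cdot,h_i)$ directly as the corresponding linear combination of the lifts $\Phi_{\mu_i,n}$ of Poincar\'e--Maass forms.
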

\begin{proof}
  Let $(F_{h_i}=F(\cdot,h_i))_i$ be the collection of functions on $D^+$ associated to $F$ (as in \eqref{eq:differential_forms_x_k})  with respect to the decomposition \eqref{eq:shimura_variety_decomp}.  By Remark \ref{rem:action_H_A_f}, i), we have that $\alpha(\Gamma_i) = \Gamma(L^{h_i})$ for all $i$.    Taking this into account, the assumptions on $F$ imply that $F_{h_i}$ (restricted to $D^+$) is a meromorphic modular form for the orthogonal group $\Gamma(L^{h_i})$ in the sense of \cite{Br1}, p. 83. According to \eqref{eq:heegner_div_decomp} and \eqref{eq:div_explicit} the component of $\Div(F)$ on $\Gamma_i\bs D^+$ (identified with $\Gamma(L^{h_i})\bs D^+$) is given by
  \begin{equation}\label{eq:divisor_comp_i}
    \begin{split}
      Z_i(F_{h_i})&= \frac{1}{2}\sum_{\mu\in L'/L}\sum_{n>0}c^+(-n,\mu)Z_i(n,\mu_i). 
   \end{split}
    \end{equation}
 Note that $H(\Q)^+$ acts on $F_{h_i}$ by multiplication with the character $\chi$. Thus, $Z_i(F_{h_i})$ may be interpreted as divisor of $F_{h_i}$ (although $F_{h_i}$ is  not a function on $\Gamma_i\bs D^+$). 
 Each component function $F_{h_i}$ satisfies the assumptions of Theorem 4.23 in \cite{Br1}. It follows from this theorem that the regularized  theta lift 
 \begin{equation}\label{eq:reg_theta_poincare}
   \Phi_L(z,h_i) = -\frac{1}{8}\sum_{\mu_i\in L'/L}\sum_{\substack{n\in \Z+Q(\mu)\\n>0}}c^+(-n,\mu)\Phi_{\mu_i,n}(z)
 \end{equation}

satisfies the equation
\begin{equation}
  \Phi_L(z,h_i) = \log\|F_{h_i}(z)\|_{\text{Pet},\frac{r}{2}} + c_i,
\end{equation}
where $c_i$ is some constant. Here $\Phi_{\mu_i,n}$ denotes the regularized theta lift of the Poincare-Maass form of index $(\mu_i, n)$ (see \cite{Br1}, Def. 1.8).   
\end{proof}



\begin{theorem}\label{thm:surjectivity_borcherds_lift}
  Let $m$ be the even rank of the lattice $L$ satisfying $m>\max(6,3+r_0)$ and $m\equiv 0\bmod{4}$. Moreover, assume that the associated discriminant group $A$ is anisotropic and that $\alpha(\Gamma_K)=\Gamma(L)$. Further, let $F:D\times H(\A_f)\rightarrow \C$ be a meromorphic modular form of weight $r$ and level $K\subset H(\A_f)$ with respect to $H(\Q)$ whose divisor is a linear combination of Heegner divisors $Z(h,n)$ (as in \eqref{eq:divisor_of_F}). Then there exists a weakly holomorphic modular form $f\in M^!_{\ell,L^{-}}$ such that $F$  is up to a constant multiple the Borcherds lift $\Psi_L$ of $f$.    
\end{theorem}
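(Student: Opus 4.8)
The plan is to follow the route taken for Cor.~1.7 of \cite{BF}, using the adelic substitutes established above: Theorem~\ref{thm:rel_theta_lifts} in place of \cite{BF1}, Thm.~6.1, Theorem~\ref{thm:weak_converse} in place of \cite{Br1}, Thm.~4.23, and, crucially, the injectivity of the Kudla--Millson lift from Corollary~\ref{cor:injectivity_kudla_millson}. Here $l=0$, so $q=2$, $\kappa=\tfrac m2$ and $\ell=2-\kappa=1-\tfrac p2$, and I would first note that the hypotheses $m>\max(6,3+r_0)$, $m\equiv 0\bmod 4$ and $A$ anisotropic are precisely what is needed for Corollary~\ref{cor:injectivity_kudla_millson} to apply with these values ($\max(6,2l-2,3+r_0)=\max(6,3+r_0)$, both $q+l=2$ and $\kappa=\tfrac m2$ are even, and $p=m-2>1$), while $\alpha(\Gamma_K)=\Gamma(L)$ is assumed and makes Theorems~\ref{thm:rel_theta_lifts} and \ref{thm:weak_converse} available.

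First I would write $\Div(F)$ in the form \eqref{eq:divisor_of_F} and apply Theorem~\ref{thm:weak_converse} to obtain a harmonic weak Maass form $g\in H^+_{\ell,L^-}$ with principal part $\sum_{\mu}\sum_{n>0}c^+(-n,\mu)e(n\tau)\frake_\mu$ and with $\Phi_L(g)(z,h_i)=-2\log\|F(z,h_i)\|_{\mathrm{Pet},r/2}+c_i$ on each component $\Gamma_i\bs D^+$ of $X_K$. By the exact sequence \eqref{eq:exact_sequence}, proving that $F$ is a Borcherds product amounts to showing that the cusp form $\xi_\ell(g)\in S_{2-\ell,L}=S_{\kappa,L}$ vanishes identically, for then $g\in M^!_{\ell,L^-}$.

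To reach $\xi_\ell(g)=0$ I would combine, on $X_K\setminus Z(g)$, the identity \eqref{eq:rel_theta_lifts}, $dd^c\Phi_L(g)=\Lambda(\xi_\ell(g))+c^+(0,0)\,\Omega$, with the fact that away from $\Div(F)=Z(g)$ the function $-2\log\|F\|_{\mathrm{Pet},r/2}$ differs from a constant multiple of $\log|\im(z)|$ by a pluriharmonic function ($F$ being holomorphic and nowhere zero there); equivalently, $dd^c$ of a log-Petersson norm away from its divisor is a fixed constant multiple of the Kähler form $\Omega$. This yields $\Lambda(\xi_\ell(g))=c_0\,\Omega$ for some $c_0\in\C$. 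The hard part will be to show $c_0=0$: I would argue that, $\xi_\ell(g)$ being a cusp form, $\Lambda(\xi_\ell(g))$ is a cuspidal (in particular $L^2$) harmonic form and hence pairs trivially with the powers of the Kähler class, so that $\int_{X_K}\Lambda(\xi_\ell(g))\wedge\Omega^{p-1}=0$ (unfolding expresses this integral as a Petersson pairing of $\xi_\ell(g)$ with the theta period $\int_{X_K}\Theta_A(\tau,\cdot,\varphi_{\KM})\wedge\Omega^{p-1}$, which by the Siegel--Weil formula is, up to normalization, an Eisenstein series and thus annihilates cusp forms), whereas $\int_{X_K}\Omega^p>0$; therefore $c_0=0$. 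Then $\Lambda(\xi_\ell(g))=0$, and Corollary~\ref{cor:injectivity_kudla_millson} forces $\xi_\ell(g)=0$, i.e.\ $g\in M^!_{\ell,L^-}$.

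Finally I would recognise $F$ as a Borcherds product: after replacing $F$ by a power $F^M$ so that the negative Fourier coefficients of $g$ become integral (the $Z(n,\mu)$ being integral divisors), Theorem~\ref{thm:Borcherds_thm13_3} produces $\Psi_L(Mg)=\Psi_L(g)^M$ with divisor $M\cdot\Div(F)$; hence $F^M/\Psi_L(g)^M$ is a meromorphic modular form with empty divisor, comparing \eqref{eq:rel_orth_modular_forms_theta_int} with the formula for $\Phi_L(g)$ above forces the weights and characters (up to finite order) to agree, so the quotient is a constant, and taking $M$-th roots (using multiplicativity of the lift) shows that $F$ is a nonzero constant multiple of $\Psi_L(g)$ with $g\in M^!_{\ell,L^-}$, as required. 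To summarise, the only genuinely non-formal step is the vanishing of $c_0$, i.e.\ that the obstruction cusp form does not lift to a nonzero multiple of the Kähler form; this rests on the cuspidality and $L^2$-properties of $\Lambda$ (Proposition~\ref{prop:l2_norm_theta_integral}) and on its injectivity (Corollary~\ref{cor:injectivity_kudla_millson}), which itself depends on the non-vanishing $L$-value of Theorem~\ref{thm:L_func_non_zero}.
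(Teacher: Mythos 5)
Your proposal follows the same route as the paper's proof: produce a harmonic weak Maass form $f$ via Theorem~\ref{thm:weak_converse}, compute $dd^c\Phi_L(f)$ in two ways (Poincar\'e--Lelong on the $F$-side, Theorem~\ref{thm:rel_theta_lifts} on the theta-side), deduce $\Lambda(\xi_\ell(f))=0$, and then combine Corollary~\ref{cor:injectivity_kudla_millson} with the exact sequence \eqref{eq:exact_sequence}. The one place where you genuinely diverge is the passage from $\Lambda(\xi_\ell(f))=c_0\,\Omega$ to $c_0=0$: the paper simply writes $-2dd^c\log\|F\|_{\text{Pet},r/2}=c^+(0,0)\Omega$ in \eqref{eq:ext_derivative}, i.e.\ it identifies the constant produced by Poincar\'e--Lelong (essentially the weight $r$ of $F$) with the constant $c^+(0,0)$ appearing in Theorem~\ref{thm:rel_theta_lifts}, an identification that is not justified a priori. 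Your argument --- pair $\Lambda(\xi_\ell(f))$ with $\Omega^{p-1}$, unfold, and use that the resulting theta period is an Eisenstein series (geometric Siegel--Weil) and hence orthogonal to the cusp form $\xi_\ell(f)$, while $\int_{X_K}\Omega^{p}>0$ --- is the standard way to close this step (it is how the corresponding point is handled in \cite{Br1} and \cite{BF}), though the geometric Siegel--Weil input you invoke is not formally established in the paper and would need a citation (e.g.\ to \cite{Ku2}). Your closing discussion of why $F$ is then a constant multiple of $\Psi_L(f)$ (integrality after passing to a power, comparison of divisors, weights and characters) is likewise more explicit than the paper, which stops at $f\in M^!_{\ell,L^-}$ and leaves that identification implicit; both of your additions strengthen rather than alter the argument.
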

\begin{proof}
  The proof is basically the same as in \cite{BF}, Corollary 1.7. From  Theorem \ref{thm:weak_converse} we know that there is a harmonic weak Maass form $f\in H^+_{\ell,L^{-}}$ with 
  \[
  \Phi_L(f)(z,h_i) = -2\log\|F(z,h_i)\|_{\text{Pet},\frac{r}{2}} + c_i
  \]
  for some constant $c_i$ for all $i$. Applying the exterior derivative $dd^c$ on both sides yields
  \begin{equation}\label{eq:ext_derivative}
  dd^c\Phi_L(f)(z,h_i) = -2dd^c\log\|F(z,h_i)\|_{\text{Pet},\frac{r}{2}} = c^+(0,0)\Omega
  \end{equation}
  for all $i$. 
  On the other hand, by Theorem \ref{thm:rel_theta_lifts}
   the identity \eqref{eq:rel_theta_lifts}
  holds for all $(z,h)\in X_K\bs Z(f)$. Comparing the equations \eqref{eq:rel_theta_lifts} and \eqref{eq:ext_derivative}, we may conclude that $\Lambda(\xi_\ell(f))(z,h) = 0$ for all $(z,h)$. By the injectivity of the Kudla-Millson theta lift, Corollary \ref{cor:injectivity_kudla_millson}, it follows $\xi_\ell(f) = 0$. The exactness of the sequence in \eqref{eq:exact_sequence} then implies $f\in M_{\ell,L^{-}}^!$, giving the desired result.
  %
  %
  %
\end{proof}

As a corollary we obtain
\begin{corollary}\label{cor:surjectivity_borcherds_lift}
  Let $m$ be the even rank of the lattice $L$ satisfying $m>\max(6,3+r_0)$ and $m\equiv 0\bmod{4}$. Moreover, let the  associated discriminant form $A$ be anisotropic and $F:D^+\rightarrow \C$ be a meromorphic modular form of weight $r$ and character $\chi$ (of finite order) for the discriminant kernel $\Gamma(L)$  whose divisor is a linear combination of Heegner divisors. Then there exists a weakly holomorphic modular form $f\in M_{\ell,L^-}^!$ such that $F$ is up to a constant multiple the Borcherds lift $\Psi_L$ of $f$. 
\end{corollary}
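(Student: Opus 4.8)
The plan is to deduce the corollary from Theorem~\ref{thm:surjectivity_borcherds_lift} by passing from the classical modular form $F$ on $D^+$ to an adelic modular form on $D\times H(\A_f)$; the observation that makes this work is that under the present hypotheses the $\GSpin$ Shimura variety $X_K$ can be arranged to be connected.

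First I would record that the anisotropy of $A=L'/L$ forces $L$ to be a \emph{maximal} even lattice: any even overlattice $M$ with $L\subsetneq M\subset V$ would produce a nonzero isotropic subgroup $M/L\subseteq A$, which is impossible. Hence the framework of Remark~\ref{rem:rel_adelic_classic_orth_modforms} is available. I would then take $K=H(\A_f)\cap C^+(\hat L)^\times$, so that $\alpha(\Gamma_K)=\Gamma(L)$; since $m>\max(6,3+r_0)$ and $m\equiv 0\bmod 4$ we have $p=m-2\ge 2$, and therefore $X_K(\C)$ is connected by \cite{AGHM}, Prop.~4.1.1. By \eqref{eq:shimura_variety_decomp} this gives $X_K(\C)=\Gamma(L)\bs D^+$, with a single coset representative which we take to be $h_1=1$, so that $L^{h_1}=L$ and, by \eqref{eq:div_components}, the adelic Heegner divisors $Z(n,\mu)$ on $X_K$ coincide with the classical Heegner divisors on $\Gamma(L)\bs D^+$.

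Next I would lift $F$ to an adelic modular form $\widetilde F\colon\widetilde{\calK}^+\times H(\A_f)\to\C$ in the sense of Definition~\ref{def:orth_mod_form_D_A_f}. Since $X_K$ is connected, strong approximation \eqref{eq:strong_approximation} reduces to $H(\A_f)=H(\Q)^+K$, and one puts $\widetilde F(z,\gamma k)=\chi(\gamma)F(\gamma^{-1}z)$ for $\gamma\in H(\Q)^+$, $k\in K$; a routine verification---using that $F$ transforms under $\Gamma(L)\cong\alpha(\Gamma_K)$ with the character induced by $\chi$---shows that $\widetilde F$ is well defined and satisfies i)--v) of Definition~\ref{def:orth_mod_form_D_A_f}, of weight $r$, level $K$ and a unitary character of finite order. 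By construction $\Div(\widetilde F)$ on $X_K$ is the image of $\Div(F)$, hence a linear combination of the $Z(n,\mu)$; after symmetrizing the coefficients in $\mu$ (legitimate since $D_\lambda^+=D_{-\lambda}^+$) it has the shape required in \eqref{eq:divisor_of_F}. Theorem~\ref{thm:surjectivity_borcherds_lift} then yields a weakly holomorphic form $f\in M^!_{\ell,L^{-}}$ with $\widetilde F=c\,\Psi_L(f)$ for some nonzero constant $c$.

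Finally, specializing at $h=1$ and invoking the remark following Theorem~\ref{thm:Borcherds_thm13_3} that $\Psi_L(f)(\cdot,1)$ is the classical Borcherds lift of $f$, we obtain $F=\widetilde F(\cdot,1)=c\,\Psi_L(f)$, which is the claim. All the substantive work has already been carried out---in the injectivity of the Kudla--Millson lift (Corollary~\ref{cor:injectivity_kudla_millson}) and in Theorems~\ref{thm:rel_theta_lifts} and~\ref{thm:weak_converse}, which feed into Theorem~\ref{thm:surjectivity_borcherds_lift}---so the only delicate point in the present argument is checking that the adelic set-up of Remark~\ref{rem:rel_adelic_classic_orth_modforms} genuinely applies, i.e.\ that $L$ is maximal and $X_K$ is connected; this is precisely where the anisotropy of $A$ is used.
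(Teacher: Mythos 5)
Your proposal is correct and follows essentially the same route as the paper's proof: deduce maximality of $L$ from the anisotropy of $A$, use the connectedness of the $\GSpin$ Shimura variety $X_K\cong\Gamma(L)\bs D^+$ to identify classical modular forms for $\Gamma(L)$ with adelic ones, and then apply Theorem~\ref{thm:surjectivity_borcherds_lift} at $h=1$. Your explicit construction of the adelic lift $\widetilde F$ via strong approximation merely spells out the identification that the paper states more briefly.
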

\begin{proof}
  In view of \cite{Ni}, Prop. 1.4.1, we may infer that $L$ is maximal. Otherwise an overlattice would lead to an isotropic subgroup of $A$, which is clearly a contradiction considering that $A$ is anisotropic.
  Let $X_K=H(\Q)\bs D\times H(\A_f)/K$ the GSpin Shimura variety as specified in Remark \ref{rem:rel_adelic_classic_orth_modforms}. Then we know from this remark that $X_K \cong \Gamma(L)\bs D^+$. This implies that $H(\A_f)=H(\Q)^+K$ because otherwise $X_K$ could not be connected (see the proof of Lemma 5.13 in \cite{Mi}). In this situation we can identify the modular forms in Definition \ref{def:orth_mod_form_D_A_f} with modular form for the orthogonal group $\Gamma(L)$.  Moreover, we get back the original Kudla-Millson theta lift $\Lambda(f)(z,1)$  and the original Borcherds lift $\Psi_L(f)(z,1)$. The application of Theorem \ref{thm:surjectivity_borcherds_lift} (for $h=1$) then concludes the proof.     
\end{proof}

\section{Non-existence of reflective automorphic forms}\label{sec:non_ex_refl_forms}
In this section we apply the converse theorem Corollary \ref{cor:surjectivity_borcherds_lift} to refine a theorem of Scheithauer in \cite{Sch} on the non-existence of reflective automorphic products. 
We first briefly recall the necessary material to present our results. We use \cite{Di} and \cite{Sch} as main references and keep the setting and notation of Section \ref{sec:surj_borcherds}.

\begin{definition}\label{def:vec_val_symmetric}
  We say that  a weakly holomorphic modular form $f$ for $\rho_A$ is {\it symmetric} if $\sigma(f) = f$ for all $\sigma \in \Aut(A)$. We can associate to any weakly holomorphic modular form $f$ a symmetric vector valued modular form of the same weight by
  \begin{equation}\label{eq:symmetrization}
  f^{\Sym} = \frac{1}{|\Aut(A)|}\sum_{\sigma\in \Aut(A)}\sigma(f).
  \end{equation}
$f^{\Sym}$ is called the symmetrization of $f$. 
  \end{definition}
Based on this definition we explain the concept of a symmetric modular form for some orthogonal group. 
\begin{definition}\label{def:reflective_orth_forms}
     Let $\Gamma_L\le \Gamma(L)$ be a subgroup of finite index of the discriminant kernel. A holomorphic modular form $F$ of weight $r\in \Z$ and character $\chi$ for $\Gamma_L$ (as defined e. g. in \cite{Br1}, p 8.3) is called 
  \begin{enumerate}
  \item[i)]
    {\it symmetric} if it is the Borcherds lift of a symmetric weakly holomorphic modular form in the sense of Definition \ref{def:vec_val_symmetric}.
    \item[ii)]
   {\it reflective} if all its zeros are located on divisors of the form $\lambda^\perp$ where $\lambda$ is a root in $L$.
      \end{enumerate}
  \end{definition}

Reflective modular forms can be obtained by applying the Borcherds lift to a vector valued modular form of a certain shape. To phrase this relation, we introduce some subsets of the discriminant group $A$: For $c\in \Z_{> 0}$ and $x\in \Q$ we put
\begin{equation}
  A_{c,x}=\left\{\mu\in A\; |\; \ord(\mu)=c \text{ and } Q(\mu)=x+\Z\right\}. 
\end{equation}
In terms of this subset we have
\begin{proposition}[\cite{Sch}, Sect. 9, \cite{Di}, Prop. 3.18]\label{prop:vec_val_refl_aut_forms}
  Assume that $L$ has square-free level and let $f$ be a modular form in  $M_{\ell,A}^!$ satisfying:
  \begin{enumerate}
  \item[i)]
    For $\mu\in A_{l,1/l}$ the Fourier expansion of $f_\mu$ is given by
    $f_\mu = c(\mu,1/l) + O(1)$ with $c(\mu,1/l)\in \Z_{>0}$. 
  \item[ii)]
    $f_\mu$ is holomorphic at $\infty$ for all other $\mu\in A$.
  \end{enumerate}
  Then the Borcherds lift $\Psi_L(f)$ is a reflective orthogonal modular form. 
\end{proposition}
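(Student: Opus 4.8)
The plan is to read off the divisor of $\Psi_L(f)$ from Borcherds' theorem and then verify that it is effective and supported on reflective hyperplanes. Write $l$ for the (square-free) level of $L$. First I would observe that hypotheses (i) and (ii) say precisely that the only singular Fourier coefficients of $f$ are the $c(\mu,1/l)\in\Z_{>0}$ attached to $\mu\in A_{l,1/l}$; in particular $f$ meets the integrality condition of Theorem~\ref{thm:Borcherds_thm13_3}, which then produces a modular form $\Psi_L(f)$ of weight $c(0,0)/2$ and some unitary character of finite order for $H(\Q)$ --- equivalently, in the situation of Remark~\ref{rem:rel_adelic_classic_orth_modforms}, for the discriminant kernel $\Gamma(L)$ --- whose divisor on $X_K$ is
\[
  Z(f)=\tfrac12\sum_{\mu\in A_{l,1/l}}c(\mu,1/l)\,Z(1/l,\mu).
\]
Since every coefficient $c(\mu,1/l)$ is positive this divisor is effective, so $\Psi_L(f)$ is holomorphic (holomorphy at the boundary components being built into Theorem~\ref{thm:Borcherds_thm13_3}(i), via the product expansion of the lift).

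The heart of the argument is to identify each $Z(1/l,\mu)$ as a sum of hyperplanes $\lambda^\perp$ with $\lambda$ a root of $L$. By \eqref{eq:div_components}, on each connected component $Z(1/l,\mu)$ is the sum of the $\lambda^\perp$ over the $\lambda$ in the coset $\mu+L\subset L'$ with $Q(\lambda)=1/l$. For such a $\lambda$, the membership $\mu\in A_{l,1/l}$ gives $l\mu=0$ in $A$, hence $l\lambda\in L$, with $(l\lambda,l\lambda)=l^2(\lambda,\lambda)=2l$; and the reflection in $\lambda^\perp=(l\lambda)^\perp$ acts by
\[
  \sigma_{l\lambda}(x)=x-\frac{2(l\lambda,x)}{(l\lambda,l\lambda)}\,l\lambda=x-(\lambda,x)\,(l\lambda),
\]
which carries $L$ into itself because $(\lambda,x)\in\Z$ for $x\in L$ and $l\lambda\in L$. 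Thus $l\lambda$ is a root of $L$ and $\lambda^\perp$ is a reflective hyperplane. Here the square-free hypothesis enters through the structure of finite quadratic modules: any $\mu$ with $Q(\mu)\equiv 1/l\pmod{\Z}$ automatically has order exactly $l$ (if $\ord(\mu)=c$ then $c^2Q(\mu)=Q(c\mu)\in\Z$ forces $l\mid c^2$, hence $l\mid c$ as $l$ is square-free, while $c\mid l$), so that $A_{l,1/l}$ really does capture all cosets contributing divisors of this shape.

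Putting the pieces together, $\Psi_L(f)$ is a holomorphic modular form of weight $c(0,0)/2$ and finite-order character for $\Gamma(L)$ --- hence for any $\Gamma_L\le\Gamma(L)$ of finite index --- all of whose zeros lie on divisors $\lambda^\perp$ with $\lambda$ a root of $L$; by Definition~\ref{def:reflective_orth_forms}(ii) this is exactly the statement that $\Psi_L(f)$ is reflective. I expect the main (though purely routine) obstacle to be the bookkeeping of Fourier-coefficient indices: because the Borcherds lift here is fed forms of type $\rho_{L^-}$ (cf.\ Corollary~\ref{cor:surjectivity_borcherds_lift}) and the divisor formula of Theorem~\ref{thm:Borcherds_thm13_3} is normalized as in \eqref{eq:divisor_weak_maasform}, one has to check that the singular coefficient controlling the hyperplanes $\lambda^\perp$ with $Q(\lambda)=1/l$ is the one written $c(\mu,1/l)$ in the statement, i.e.\ that the roles of $A_{l,1/l}$ and $A_{l,-1/l}$ are assigned consistently; this is where I would simply follow \cite{Sch}, Sect.~9, and \cite{Di}, Prop.~3.18, and the remaining argument is insensitive to the choice.
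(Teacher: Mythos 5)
The paper does not prove this proposition at all --- it is quoted verbatim from \cite{Sch}, Sect.~9, and \cite{Di}, Prop.~3.18 --- so there is no internal proof to compare against; your argument is the standard one from those references and is essentially correct. Reading off $Z(f)=\tfrac12\sum_{\mu}c(\mu,1/l)Z(1/l,\mu)$ from Theorem~\ref{thm:Borcherds_thm13_3}, using positivity of the $c(\mu,1/l)$ for effectivity, and checking via $\sigma_{l\lambda}(x)=x-(\lambda,x)(l\lambda)$ that $l\lambda\in L$ generates a reflection preserving $L$ (so that each $D_\lambda^+$ in \eqref{eq:div_components} is a reflective hyperplane) is exactly how the cited sources argue, and your closing caveat about the $\rho_A$ versus $\rho_{A^-}$ indexing of the singular coefficients correctly identifies the only point where the paper's loose notation ($f_\mu=c(\mu,1/l)+O(1)$ with exponents nominally in $\Z+Q(\mu)$) needs to be reconciled with the divisor formula \eqref{eq:divisor_weak_maasform}.
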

Dittman proved the converse of Proposition \ref{prop:vec_val_refl_aut_forms} if $L$ additionally splits a hyperbolic plane $U$ over $\Z$.

\begin{proposition}\label{prop:refl_aut_vec_val_mfs}
  Let $L$ be lattice of square-free level and suppose that $L$ splits a hyperbolic plane $U$. If the Borcherds lift $\Psi_L(f)$ is reflective, then the vector valued modular form $f\in M^!_{\ell,A}$ satisfies:
  \begin{enumerate}
    \item[i)]
    For $\mu\in A_{l,1/l}$ the Fourier expansion of $f_\mu$ is given by
    $f_\mu = c(\mu,1/l) + O(1)$ with $c(\mu,1/l)\in \Z_{>0}$. 
  \item[ii)]
    $f_\mu$ is holomorphic at $\infty$ for all other $\mu\in A$.
    \end{enumerate}
\end{proposition}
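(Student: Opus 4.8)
The plan is to read the principal part of $f$ off the divisor of $\Psi_L(f)$; note that, unlike in the surjectivity results above, no converse theorem is needed here, since $f$ itself is given. Since $\Psi_L(f)$ is reflective it is in particular holomorphic, so by Theorem \ref{thm:Borcherds_thm13_3}(ii) its divisor on $X_K$ is
\[
\Div(\Psi_L(f)) = Z(f) = \tfrac{1}{2}\sum_{\mu\in L'/L}\sum_{n>0}c^+(-n,\mu)\,Z(n,\mu),
\]
an \emph{effective} divisor whose support is contained in the union of the hyperplanes $\lambda^\perp$ with $\lambda$ a root of $L$ (Definition \ref{def:reflective_orth_forms}). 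Since $\ell=1-\tfrac p2$ is negative, $M_{\ell,A}=0$, so $f$ is determined by its principal part; as assertions (i) and (ii) are themselves only statements about the polar behaviour of the $f_\mu$ at $\infty$, it suffices to prove that $c^+(-n,\mu)\ge 0$ for all $\mu,n$ and that $c^+(-n,\mu)=0$ unless $n=1/l$ and $\mu\in A_{l,1/l}$ with $l=\ord(\mu)$, the integrality and strict positivity of the surviving coefficients being part of the statement that $\Psi_L(f)$ is an honest Borcherds product (cf. Proposition \ref{prop:vec_val_refl_aut_forms}).

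First I would settle the sign, and this is where the hypothesis that $L$ splits a hyperbolic plane $U=\langle e,f\rangle$ (with $(e,e)=(f,f)=0$, $(e,f)=1$) enters. Writing $L=U\oplus M$ one has $L'/L\cong M'/M$, and for every $\lambda_0$ in the coset $\mu+M$ the vector $\lambda=e+(n-Q(\lambda_0))f+\lambda_0$ lies in $\mu+L$ with $Q(\lambda)=n$, so that $\lambda^\perp$ occurs in $Z(n,\mu)$; for a generic choice of $\lambda_0$ this prime divisor lies on no other Heegner divisor $Z(n',\mu')$. Consequently the Heegner divisors occurring in $Z(f)$ admit no cancellation among their components, and effectivity of $Z(f)$ forces $c^+(-n,\mu)\ge 0$ for all $\mu$ and all $n>0$.

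The main step is then the root analysis. Fix a pair $(n,\mu)$ with $c^+(-n,\mu)>0$ and put $c=\ord(\mu)$; by the previous paragraph, for \emph{every} $\lambda_0\in\mu+M$ the hyperplane $\big(e+(n-Q(\lambda_0))f+\lambda_0\big)^\perp$ must coincide with $r^\perp$ for some root $r$ of $L$. Since such a $\lambda$ has primitive $U$-component, the primitive vector of $L$ proportional to it divides $c\lambda$, and here the structure of roots of a lattice of \emph{odd} square-free level $l$ is used: every root $r$ satisfies $(r,L)=Q(r)\Z$ with $Q(r)=\tfrac12(r,r)\mid l$ (see \cite{Sch}). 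Pairing $r$ with $e$, with $f$, and with $M$, and letting $\lambda_0$ run through representatives of different content, one deduces successively that $Q(r)=c$, that $n=1/c$, and that $Q(\mu)\equiv 1/c\pmod{\Z}$; the point of varying $\lambda_0$ is precisely to exclude the possibility that $Z(n,\mu)$ is root-supported while some component fails to be a root hyperplane. This gives $c^+(-n,\mu)=0$ unless $n=1/l$ and $\mu\in A_{l,1/l}$ with $l=\ord(\mu)$, which together with the sign statement is exactly (i) and (ii). I expect this last step to be the genuine obstacle: it is essentially the classification of roots of square-free-level lattices due to Scheithauer and \cite{Di}, and it is the place where both hypotheses — square-free (odd) level and the splitting of $U$ — are really needed, since without the $U$-summand a Heegner divisor $Z(n,\mu)$ can be too small to force the constraint.
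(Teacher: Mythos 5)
The paper itself offers no proof of this proposition: it is quoted verbatim from Dittmann's thesis \cite{Di} (``Dittman proved the converse \dots''), so there is no in-paper argument to measure you against. Your overall strategy is the right one and is presumably Dittmann's: since $\Psi_L(f)$ is by definition holomorphic, $Z(f)$ is effective with support in the root hyperplanes; the splitting $L=U\oplus M$ produces, for every pair $(n,\mu)$ with $c^{+}(-n,\mu)\neq 0$, a vector $\lambda=e+(n-Q(\lambda_0))f+\lambda_0\in\mu+L$ of norm $n$ which is primitive in $L'$ because $(\lambda,f)=1$; and the root condition on the hyperplane $\lambda^{\perp}$ then pins down $n$ and $Q(\mu)$. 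You also correctly identify that the role of the $U$-summand is to guarantee that a nonzero coefficient actually produces a nonempty Heegner divisor.

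Two steps of the write-up do not hold up as given. First, the claim that for generic $\lambda_0$ the hyperplane $\lambda^{\perp}$ lies on no other Heegner divisor is false: for every integer $t\ge 1$ one has $t\lambda\in t\mu+L$ and $Q(t\lambda)=t^{2}n$, so $\lambda^{\perp}$ is a component of $Z(t^{2}n,t\mu)$ for \emph{every} choice of $\lambda_0$. The multiplicity of $\lambda^{\perp}$ in $Z(f)$ is the finite sum $\tfrac12\sum_{t\ge1}\bigl(c^{+}(-t^{2}n,t\mu)+c^{+}(-t^{2}n,-t\mu)\bigr)$, and non-negativity of the individual coefficients must instead be extracted by downward induction on $n$, starting from the largest exponent in the principal part and using the symmetry $c^{+}(-n,\mu)=c^{+}(-n,-\mu)$ coming from \eqref{eq:weilz}. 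Second, the divisibility analysis that you explicitly defer is the crux and should be executed; it is shorter than you fear and does not require varying $\lambda_0$. With $c=\ord(\mu)$, the primitive vector of $L$ on the line $\R\lambda$ is $r=c\lambda$, and the root condition $(r,L)\subseteq Q(r)\Z$ applied to $(r,f)=c$ gives $c^{2}n\mid c$, i.e.\ $n=1/(cm)$ for some $m\in\Z_{>0}$ (the pairings with $e$ and with $M$ yield no further constraints). Then $2cQ(\mu)\in\Z$ forces $2/m\in\Z$, and the standing assumption that the level $N$ is odd (Section 3) together with $cm\mid N$ excludes $m=2$; hence $n=1/c$ and $\mu\in A_{c,1/c}$. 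Integrality of the surviving coefficients is part of the hypothesis of Theorem \ref{thm:Borcherds_thm13_3} and need not be argued. With these two repairs your outline becomes a complete proof.
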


At this point it is worth mentioning that in \cite{Sch} Scheithauer refers to the vector valued modular forms in Prop. \ref{prop:vec_val_refl_aut_forms} as {\it reflective modular forms}. Moreover, he calls a modular form $F$ for some orthogonal group $\Gamma_L$ as an {\it reflective automorphic product} if $F$ is the Borcherds lift of some vector valued modular as specified in Prop. \ref{prop:vec_val_refl_aut_forms}. In terms of these definitions an important assertion of \cite{Sch} is given in the following theorem.

\begin{theorem}\label{thm:non_exist_autom_refl_prod}
  The number of automorphic products of singular weight which are symmetric and reflective on lattices of type $(p,2)$ with $p>2$, square-free level and $q$-ranks at most $p+1$ is finite. 
\end{theorem}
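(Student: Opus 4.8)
The plan is to combine the converse theorem, Corollary~\ref{cor:surjectivity_borcherds_lift}, with the structural description of reflective vector valued modular forms (Proposition~\ref{prop:vec_val_refl_aut_forms} together with its converse) and with Scheithauer's finiteness mechanism, which only uses a lattice through its discriminant form and its root system once the vector valued input has been identified. First I would set up the reduction. Let $F$ be a symmetric reflective automorphic product of singular weight on a lattice $L$ of type $(p,2)$, $p>2$, of odd square-free level and with $q$-ranks at most $p+1$. A lattice of square-free level which is not maximal admits a prime-order isotropic class in $L'/L$, hence an even overlattice; replacing $L$ by such an overlattice finitely many times I may assume $L$ maximal, and for square-free level this is equivalent to $A=L'/L$ being anisotropic. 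Since anisotropic $\F_p$-quadratic forms have dimension at most $2$, under this reduction the hypothesis on $q$-ranks becomes automatic. By \cite{Ni}, Prop.~1.4.1, and Remark~\ref{rem:rel_adelic_classic_orth_modforms} the associated GSpin Shimura variety is connected, so $F$ may be regarded as a modular form for $\Gamma(L)$; provided $m=p+2\equiv 0\bmod 4$ and $m>\max(6,3+r_0)$, Corollary~\ref{cor:surjectivity_borcherds_lift} produces a weakly holomorphic $f\in M^!_{\ell,L^-}$, $\ell=1-p/2$, with $F=c\cdot\Psi_L(f)$. Replacing $f$ by its symmetrization $f^{\Sym}$ (Definition~\ref{def:vec_val_symmetric}, equation \eqref{eq:symmetrization}), which does not change $\Psi_L(f)$ up to a scalar because $F$ is symmetric, I may take $f$ symmetric. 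For the residual values $p\not\equiv 2\bmod 4$ not reached by Corollary~\ref{cor:surjectivity_borcherds_lift} I would instead invoke the converse theorems already available for prime level or for lattices splitting $U\oplus U(N)$ over $\Z$ (\cite{Br2}, \cite{Di}), exactly as in \cite{Sch}.

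Next I would pin down the shape of $f$. By Theorem~\ref{thm:Borcherds_thm13_3}(ii) the divisor of $\Psi_L(f)$ equals $Z(f)$, and reflectivity of $F$ means $\Div(F)$ is supported on the mirrors $\lambda^\perp$ of roots $\lambda\in L$; comparing with \eqref{eq:divisor_weak_maasform} and \eqref{eq:div_components} this forces every nonzero principal-part coefficient $c^+(-n,\mu)$ of $f$ to be attached to a root, so $n$ is a root norm and $\mu$ lies in some $A_{c,x}$ with $c=\ord(\mu)$ and $x=Q(\mu)+\Z$. The singular-weight hypothesis fixes the constant term $c(0,0)$ of $f$ (indeed $c(0,0)/2$ equals the singular weight), and together with the positivity of the multiplicities of the mirror divisors this yields exactly conditions (i) and (ii) of Proposition~\ref{prop:vec_val_refl_aut_forms}: for $\mu\in A_{c,1/c}$ the component $f_\mu=c(\mu,1/c)+O(1)$ with $c(\mu,1/c)\in\Z_{>0}$, and $f_\mu$ is holomorphic at $\infty$ otherwise. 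Thus $F$ is the Borcherds lift of a reflective vector valued modular form in Scheithauer's sense; this is the step where Corollary~\ref{cor:surjectivity_borcherds_lift} removes the hypothesis in \cite{Di}, \cite{Sch} that $L$ split a hyperbolic plane, and when $X_K$ has several components the analysis is carried out componentwise on the $\Gamma(L^{h_i})\bs D^+$ using Theorem~\ref{thm:weak_converse} and Remark~\ref{rem:rel_adelic_classic_orth_modforms}.

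Finally I would run Scheithauer's bound. Pairing $f$ with the weight $2-\ell=1+p/2$ Eisenstein series $E\in M_{2-\ell,L^-}$ whose constant term is $\frake_0$, the obstruction principle for vector valued modular forms (the Bruinier--Funke pairing) gives $c(0,0)=-\sum_{\mu}\sum_{n>0}c^+(-n,\mu)\,b(\mu,n)$, where the $b(\mu,n)$ are the explicit Fourier coefficients of $E$ (finite products of local densities and Gauss sums). By the previous step the principal part is supported on root data of bounded norm and $c(0,0)$ is a fixed number, while for anisotropic discriminant forms of odd square-free level the coefficients $b(\mu,n)$ admit an explicit lower bound in terms of $|A|$ read off from the classification \eqref{eq:anisotropic_finite_modules}. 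Combined with the standard upper bound on the number of roots of $L$ of given norm, this identity forces $|A|$, hence the genus of $L$, hence $L$ itself and the finitely many admissible principal parts, into a finite set. This is precisely the content of Sections~10--12 of \cite{Sch}, which only use the lattice through its discriminant form and root system once the vector valued input is known, so it transfers verbatim to our setting.

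The main obstacle is the middle step: showing that reflectivity of $F$ forces $f$ into the shape of Proposition~\ref{prop:vec_val_refl_aut_forms} \emph{without} Dittmann's hyperbolic-plane hypothesis. Concretely, one must exclude principal-part contributions that are not of ``root type'' purely from the support of $\Div(F)$ on mirror divisors, and control the behaviour of $\Psi_L(f)$, and hence of $f$, along the boundary components of $X_K$. With $X_K$ connected and $L$ maximal this can be done componentwise as in Theorem~\ref{thm:weak_converse}, but the bookkeeping with the subsets $A_{c,x}$ and the orders of elements of the discriminant group is the delicate point; everything else is either a direct citation of \cite{Sch} or a routine consequence of the converse theorem established above.
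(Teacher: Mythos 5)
There is a mismatch between what you are proving and what the statement says. Theorem~\ref{thm:non_exist_autom_refl_prod} is Scheithauer's Theorem~12.3, quoted in the paper for context and \emph{not} proved there; in its statement an ``automorphic product \dots reflective'' is, by the definitions recalled immediately before it, \emph{already} the Borcherds lift of a reflective vector valued modular form in the sense of Proposition~\ref{prop:vec_val_refl_aut_forms}. So the vector valued input $f$ is part of the data, no converse theorem is required, and the first two thirds of your argument (the overlattice reduction, the appeal to Corollary~\ref{cor:surjectivity_borcherds_lift}, the divisor analysis pinning down the shape of $f$) address a different statement --- essentially the paper's own refinement, Theorem~\ref{thm:non_ex_reflective_mfs}. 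What remains of a proof of the stated theorem is your last paragraph, which is a citation of Scheithauer's Sections~10--12 rather than an argument.

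Read instead as a proof of the statement under your interpretation of ``reflective'' (divisor supported on root mirrors), there are concrete gaps. The hypotheses of Corollary~\ref{cor:surjectivity_borcherds_lift} ($A$ anisotropic, hence $q$-rank at most $2$ per prime, $m\equiv 0\bmod 4$, $m>\max(6,3+r_0)$, and --- a standing assumption of the paper --- odd level $N$) are strictly stronger than those of the theorem (square-free level, $q$-ranks at most $p+1$, $p>2$), and neither the prime-level nor the $U\oplus U(N)$-splitting converse theorems of \cite{Br2} fill the complement. The reduction to maximal lattices is also not finite-to-one in the relevant sense: a single maximal lattice has infinitely many sublattices of square-free level as the level varies, and a modular form for $\Gamma(L)$ need not descend to one for $\Gamma(M)$ for an overlattice $M$; Scheithauer bounds level and rank directly from the Eisenstein pairing, without this reduction. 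Finally, the step you flag as delicate --- forcing $f$ into the shape of Proposition~\ref{prop:vec_val_refl_aut_forms} without Dittmann's hypothesis that $L$ splits a hyperbolic plane --- is resolved in the paper not by avoiding that hypothesis but by Lemma~\ref{prop.split_hyperbol_plane}, which shows that anisotropy of $A$ (with $p\ge 3$) forces $L\cong K\oplus U$, so Proposition~\ref{prop:refl_aut_vec_val_mfs} applies directly; in the generality of the stated theorem, where $A$ need not be anisotropic, you have no substitute for this step.
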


By means of the converse theorem of this paper, we are able to extend this to result to reflective modular forms  as given in Definition \ref{def:reflective_orth_forms} without assuming that the underlying lattice $L$ splits a sublattice of the form $U\oplus U(N)$ over $\Z$.  Nevertheless, it is necessary that the conditions of the converse theorem in Corollary \ref{cor:surjectivity_borcherds_lift} are satisfied.

We first note that the assumptions on the lattice $L$ in this paper are sufficient to prove that $L$ splits a hyperbolic plane $U$ over $\Z$. Therefore, we may employ Prop. \ref{prop:refl_aut_vec_val_mfs} for subsequent arguments.

\begin{lemma}\label{prop.split_hyperbol_plane}
  Let $L$ be non-degenerate even lattice of type $(p,2)$ with $p\ge 3$ whose associated discriminant group $A=L'/L$ is anisotropic. Then $L$ splits a hyperbolic plane over $\Z$.
\end{lemma}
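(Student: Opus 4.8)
I want to show that a non-degenerate even lattice $L$ of type $(p,2)$ with $p\ge 3$ and anisotropic discriminant group $A = L'/L$ splits a hyperbolic plane $U$ over $\Z$. The key structural input is that the signature is $(p,2)$, so $L$ is indefinite with a $2$-dimensional negative part, and Eichler's theory of indefinite lattices applies once we can produce an isotropic vector that is primitive and "admissible" (i.e., that can be completed to a hyperbolic plane). The anisotropy of $A$ is a strong hypothesis: by \eqref{eq:anisotropic_finite_modules} each $p$-group $A_p$ has rank at most $2$ (for $p=2$ it is of the form $\calA_{2^k}^t$, $\calB_{2^k}$, or $\calC_{2^k}$, hence rank $\le 2$; for odd $p$ it is $\calA_p^t$ or $\calA_p^t\oplus\calA_p^1$, again rank $\le 2$). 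In particular the minimal number of generators $\ell(A)$ of $A$ satisfies $\ell(A)\le 2$, while $\operatorname{rank} L = p+2 \ge 5$.

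First I would invoke Nikulin's existence/uniqueness criteria for indefinite even lattices (\cite{Ni}, Theorem 1.10.1 and Corollary 1.13.3, or equivalently \cite{Ni}, Corollary 1.13.5): an even lattice of signature $(t_+,t_-)$ with $t_+\ge 1$, $t_-\ge 1$, $t_+ + t_- \ge \ell(A) + 2$, and whose discriminant form is $A$, is unique in its genus and splits a hyperbolic plane $U$ over $\Z$ — more precisely, under the condition $\operatorname{rank} L \ge \ell(A) + 2$ (together with indefiniteness) the lattice is isomorphic to $U \oplus L_0$ for some even lattice $L_0$. Here $t_+ = p \ge 3$, $t_- = 2$, and $\ell(A)\le 2$, so $\operatorname{rank} L = p + 2 \ge \ell(A) + 3 > \ell(A) + 2$, and the hypotheses are comfortably met. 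This immediately gives the splitting.

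An alternative, more self-contained route: since $L$ is indefinite of rank $\ge 5$, the Hasse–Minkowski theorem together with Meyer's theorem guarantees that $V = L\otimes\Q$ is isotropic, so $L$ contains a primitive isotropic vector $e$. The obstruction to completing $e$ to a hyperbolic plane inside $L$ is that the "admissible level" $a$ (the smallest positive integer with $e^\perp/\Z e$ receiving $e$ with divisibility $a$) may exceed $1$; this obstruction is controlled by the image of $e$ in $A$. The anisotropy hypothesis is exactly what forces this to work: I would argue that among the isotropic vectors of $V$ one can choose $e$ so that $\operatorname{div}_L(e) = 1$ — if every primitive isotropic $e$ had $\operatorname{div}_L(e) > 1$, the induced element $e/\operatorname{div}_L(e) + L \in A$ would be a nonzero isotropic element of $A$, contradicting anisotropy — after reducing to the local problem at each prime dividing $|A|$ and using that each $A_p$ in \eqref{eq:anisotropic_finite_modules} is anisotropic. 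Once $\operatorname{div}_L(e)=1$, standard lattice theory (cf. \cite{B}, Section 2, or \cite{Br1}) produces $f\in L$ with $(e,f)=1$, $(f,f)=0$ after subtracting a suitable multiple of $e$, giving $U = \Z e \oplus \Z f \subset L$ as an orthogonal direct summand.

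**Main obstacle.** The delicate point is the local analysis at $p=2$ and at the primes where $A_p$ has rank $2$ (the types $\calB_{2^k}$, $\calC_{2^k}$, or $\calA_p^t\oplus\calA_p^1$): one must check that anisotropy of these rank-$2$ $2$- and $p$-groups genuinely prevents all primitive isotropic vectors from having divisibility $> 1$, which requires a short case-by-case inspection of the forms in \eqref{eq:anisotropic_finite_modules}. For this reason I would prefer to cite Nikulin's theorem directly — it packages precisely this computation into the clean numerical inequality $\operatorname{rank} L \ge \ell(A) + 2$, which we have verified holds. So the proof reduces to: (i) observe $\ell(A) \le 2$ from \eqref{eq:anisotropic_finite_modules}; (ii) observe $\operatorname{rank} L = p+2 \ge 5 > \ell(A)+2$ and that $L$ is indefinite of signature $(p,2)$ with both parts nonzero; (iii) apply \cite{Ni}, Corollary 1.13.3 (uniqueness in the genus) together with the fact that $U^{\oplus 2}\oplus(-E_8)^{\oplus k}$-stabilized even lattices in this range split $U$, to conclude $L \cong U \oplus L_0$.
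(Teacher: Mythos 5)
Your primary (Nikulin) route is correct but genuinely different from the paper's argument. The paper proceeds by hand: since $L$ is indefinite of rank $p+2\ge 5$, Meyer's theorem gives a primitive isotropic $z\in L$; a dual vector $z'\in L'$ with $(z,z')=1$ is adjusted (via \cite{Di}, Prop.~2.73) to an isotropic $z'$ with $lz'\in L$ for some $l\mid N$, yielding $L\cong K\oplus U(l)$; and finally $U(l)'/U(l)\cong(\Z/l\Z)^2$ contains nonzero isotropic elements, so anisotropy of $A$ forces $l=1$. You instead check $\ell(A)\le 2$ and $\operatorname{rank}L=p+2\ge \ell(A)+3$ and invoke Nikulin's existence theorem (producing an even lattice $L_0$ of signature $(p-1,1)$ with discriminant form that of $A$; the inequality $p>\ell(A)$ keeps you away from the boundary cases of Thm.~1.10.1) together with uniqueness in the genus for indefinite lattices of rank $\ge\ell(A)+2$, to identify $L$ with $U\oplus L_0$. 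This buys brevity at the price of heavier machinery; the paper's route is more elementary and exhibits the complement explicitly. Note that your ``alternative route'' is in substance the paper's proof, and your divisibility argument --- a primitive isotropic $e\in L$ with $\operatorname{div}_L(e)=d>1$ would produce the nonzero isotropic class $e/d+L\in A$ --- is already a complete global argument; no prime-by-prime case analysis of \eqref{eq:anisotropic_p_groups} is needed, contrary to the worry that pushed you toward Nikulin. It is the same use of anisotropy as the paper's $U(l)'/U(l)$ step.

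Two small inaccuracies. First, your justification of $\ell(A_2)\le 2$ is wrong as stated: an anisotropic $2$-group need not be a single Jordan block from \eqref{eq:anisotropic_p_groups} (for instance $\calA_2^1\oplus\calA_2^1\oplus\calA_2^1$ is anisotropic of rank $3$ for the $\Q/\Z$-valued form). This is harmless here only because of the paper's standing assumption that the level $N$ is odd, which forces $A_2=0$; that assumption should be invoked explicitly. Second, the clause about ``$U^{\oplus 2}\oplus(-E_8)^{\oplus k}$-stabilized even lattices'' does not correspond to any step of the argument; what is actually needed is exactly the existence of $L_0$ plus uniqueness in the genus applied to $L$ and $U\oplus L_0$, as above.
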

\begin{proof}
  We first note that the level $N$ of $L$ is square-free as $A$ is anisotropic and $N$ is odd (see e. g. \cite{BEF}, p. Lemma 4.9).
  In view of the type of $L$ we clearly have an isotropic vector $z\in L$, which we can choose to be primitive. It is well known (arguing the same way as in the proof of Lemma 4.6 in \cite{Eb}) that there is a vector $z'\in L'$ associated to $z$ with  $(z,z')=1$.
  Dittmann shows in Prop. 2.73 that $z'$ can be chosen to be isotropic and to satisfy $lz'\in L$ for some positive interger $l$ dividing $N$. In particular, we have a decomposition of the form $L\cong K\oplus U(l)$, where $K = L\cap z^\perp\cap w^\perp$ with $w=lz'$.
It easily seen that $U(l)'/U(l)\cong (\Z/l\Z)^2$ and that this subgroup of $A$ contains isotropic elements (c. f. \cite{Di}, Prop. 1.71). But this is not possible as $A$ is anisotropic. Hence, $l$ must be equal to 1. In this case we have $L\cong K\oplus U$, giving the desired assertion. 
\end{proof}

We are now in a position to state and proof the main result of this section.

\begin{theorem}\label{thm:non_ex_reflective_mfs}
  There are only finitely many lattices (up to isomorphism) that satisfy all conditions of Corollary \ref{cor:surjectivity_borcherds_lift} and that admit a reflective modular form of singular weight for $\Gamma(L)$. 
  \end{theorem}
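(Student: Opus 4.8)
The plan is to combine the converse theorem of this paper (Corollary~\ref{cor:surjectivity_borcherds_lift}) with Scheithauer's finiteness result (Theorem~\ref{thm:non_exist_autom_refl_prod}) by showing that, for lattices satisfying the hypotheses of Corollary~\ref{cor:surjectivity_borcherds_lift}, \emph{every} reflective modular form of singular weight in the sense of Definition~\ref{def:reflective_orth_forms} arises as a Borcherds lift of a vector valued modular form of the shape described in Proposition~\ref{prop:vec_val_refl_aut_forms}. Once that is established, the reflective modular forms in question are precisely Scheithauer's reflective automorphic products, and since by Lemma~\ref{prop.split_hyperbol_plane} such a lattice has type $(p,2)$ with $p\ge 3$, square-free level, and (because $A$ is anisotropic, hence each $p$-group is of the form \eqref{eq:anisotropic_finite_modules}) $q$-rank at most $p+1$, Theorem~\ref{thm:non_exist_autom_refl_prod} applies verbatim and yields finiteness.

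The key steps, in order, are as follows. First I would record that any lattice $L$ satisfying the conditions of Corollary~\ref{cor:surjectivity_borcherds_lift} has $m\equiv 0\bmod 4$ and $m>\max(6,3+r_0)$, is maximal (as in the proof of that corollary), and by Lemma~\ref{prop.split_hyperbol_plane} splits a hyperbolic plane $U$ over $\Z$; in particular its level $N$ is odd and square-free. Second, let $F$ be a reflective modular form of singular weight for $\Gamma(L)$. By Definition~\ref{def:reflective_orth_forms}, part i), $F$ is itself already the Borcherds lift $\Psi_L(g)$ of a symmetric weakly holomorphic modular form $g\in M^!_{\ell,A}$; alternatively, using that the divisor of $F$ is supported on divisors $\lambda^\perp$ with $\lambda$ a root, hence is a linear combination of Heegner divisors, Corollary~\ref{cor:surjectivity_borcherds_lift} produces some $f\in M^!_{\ell,L^-}$ with $\Psi_L(f)$ equal to $F$ up to a nonzero constant. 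Third, since $L$ splits $U$ and has square-free level, Proposition~\ref{prop:refl_aut_vec_val_mfs} applies to this $f$ and shows that its Fourier expansion has exactly the shape in Proposition~\ref{prop:vec_val_refl_aut_forms}: $f_\mu = c(\mu,1/l)+O(1)$ with $c(\mu,1/l)\in\Z_{>0}$ for $\mu\in A_{l,1/l}$, and $f_\mu$ holomorphic at $\infty$ otherwise. Hence $F=\Psi_L(f)$ (up to a constant) is a reflective automorphic product in Scheithauer's sense.

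Fourth, I would verify the remaining numerical hypotheses of Theorem~\ref{thm:non_exist_autom_refl_prod}: $L$ has type $(p,2)$ with $p\ge 3$ (forced by $m>6$ and $m\equiv 0\bmod 4$, together with the signature $q=2$ implicit in Corollary~\ref{cor:surjectivity_borcherds_lift}), square-free level, and $q$-ranks at most $p+1$. The last point follows from the classification \eqref{eq:anisotropic_finite_modules}: for an anisotropic discriminant form each $p$-group is $\calA_p^t$ or $\calA_p^t\oplus\calA_p^1$, so every $q$-rank is at most $2\le p+1$ once $p\ge 1$. Finally, since $F$ has singular weight and is symmetric and reflective, it is counted by Theorem~\ref{thm:non_exist_autom_refl_prod}, which asserts the set of such automorphic products over all lattices of type $(p,2)$ with $p>2$, square-free level, and bounded $q$-rank is finite; restricting to the sublattices satisfying the stronger conditions of Corollary~\ref{cor:surjectivity_borcherds_lift} can only shrink the set, so finiteness of the lattices admitting such an $F$ follows.

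The main obstacle I anticipate is the bookkeeping around symmetry and the passage between Definition~\ref{def:reflective_orth_forms} (which builds the symmetry condition into the definition of ``reflective'') and Scheithauer's framework (where ``reflective automorphic product'' and ``symmetric'' are separate adjectives, and the vector valued input need only satisfy the conditions of Proposition~\ref{prop:vec_val_refl_aut_forms}). One must check that the $f$ produced by the converse theorem can be taken symmetric — either by symmetrizing via \eqref{eq:symmetrization}, noting that $f^{\Sym}$ has the same Borcherds lift up to a constant and the conditions of Proposition~\ref{prop:refl_aut_vec_val_mfs} are stable under symmetrization since they only constrain $|\Aut(A)|$-orbits of coefficients — so that $F$ is genuinely one of the objects enumerated in Theorem~\ref{thm:non_exist_autom_refl_prod}. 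A second, more minor point is confirming that the singular weight hypothesis on $F$ transfers correctly: the weight of $\Psi_L(f)$ is $c(0,0)/2$ and singular weight is $p/2-1$, and one should note that this value is compatible with, and does not conflict with, the constraint $m\equiv 0\bmod 4$ in Corollary~\ref{cor:surjectivity_borcherds_lift}.
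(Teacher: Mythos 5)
Your proposal follows essentially the same route as the paper: invoke Corollary \ref{cor:surjectivity_borcherds_lift} to write $F$ as $\Psi_L(f)$ up to a constant, use Lemma \ref{prop.split_hyperbol_plane} together with Proposition \ref{prop:refl_aut_vec_val_mfs} to conclude that $f$ is reflective, pass to the symmetrization $f^{\Sym}$, check the $q$-rank bound via \eqref{eq:anisotropic_finite_modules}, and conclude by Scheithauer's finiteness result (the paper re-runs the argument of Sections 11--12 of \cite{Sch} via Thm.~11.1 there rather than quoting Theorem \ref{thm:non_exist_autom_refl_prod} as a black box, but this is the same content).

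One assertion in your handling of the symmetry issue is wrong, though it is repairable: $f^{\Sym}$ does \emph{not} have the same Borcherds lift as $f$ up to a constant in general, since symmetrizing changes the principal part and hence the divisor of the lift. The correct fix --- and what the paper implicitly does --- is to observe that you do not need $F$ itself to be symmetric: it suffices that the lattice $L$ carries \emph{some} symmetric reflective automorphic product of singular weight, namely $\Psi_L(f^{\Sym})$, since $f^{\Sym}$ is still reflective (Dittmann, (3.2)) and $(f^{\Sym})_0$ has the same constant coefficient $c(0,0)=p-2$, so its lift again has singular weight. Then $L$ is among the finitely many lattices counted by Theorem \ref{thm:non_exist_autom_refl_prod}. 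Relatedly, your first suggested shortcut (that Definition \ref{def:reflective_orth_forms}~i) already exhibits $F$ as a lift of a symmetric form) does not apply: the theorem only assumes $F$ is reflective in the sense of part~ii), not symmetric, so the detour through the converse theorem is genuinely needed.
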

\begin{proof}
  Let $F$ be a reflective modular form of  weight $\frac{p}{2}-1$  and character of finite order with respect to the discriminant kernel $\Gamma(L)$. 
  By Corollary \ref{cor:surjectivity_borcherds_lift}, there is a weakly modular form $f\in M^!_{\ell,L^-}$ such that $F$ is a constant multiple of $\Psi_L(f)$. As $F$ has singular weight, the Fourier coefficient $c(0,0)$ is equal to $p-2$.  
 By Lemma \ref{prop.split_hyperbol_plane}, we know that $L$ splits a hyperbolic plane $U$. Prop. \ref{prop:refl_aut_vec_val_mfs} then implies that $f$ is a reflective vector valued modular form. The symmetrization of $f$ is a symmetric and reflective (see (3.2) in \cite{Di}). It is easily checked that the constant coefficient of $(f^{\Sym})_0$ is also equal to $c(0,0)=p-2$. Finally, by means of \eqref{eq:anisotropic_finite_modules} it clear that the $q$-rank of each $q$ component of $A$ is smaller than $p+1$. Thus, $f^{\Sym}$ meets all conditions of Thm. 11.1 in \cite{Sch}. Then arguing the same way as in \cite{Sch}, Sect. 11 and 12,  yields that the rank and the level of $L$ is bounded, which gives the claimed assertion.  
\end{proof}

\end{document}